\documentclass[10pt]{amsart}

\usepackage[english]{babel}
\usepackage[T1]{fontenc}
\usepackage[utf8]{inputenc}
\usepackage{amsmath}
\usepackage{amssymb}
\usepackage{amsthm}
\usepackage{color}
\usepackage[a4paper]{geometry}
\usepackage{tikz}
\usepackage{mathdots}
\usepackage[all]{xy}
\usepackage{hyperref}
\usepackage{accents}
\usepackage{lmodern}
\usepackage[multiple]{footmisc}
\usepackage{mathrsfs}

\theoremstyle{definition}
\newtheorem{definition}{Definition}[section]

\theoremstyle{plain}
\newtheorem{theorem}[definition]{Theorem}
\newtheorem{prop}[definition]{Proposition}
\newtheorem{cor}[definition]{Corollary}
\newtheorem{lem}[definition]{Lemma}

\newcommand{\mb}{\mathbb}
\newcommand{\mc}{\mathcal}
\newcommand{\mf}{\mathfrak}

\newcommand{\bs}{\boldsymbol}
\newcommand{\Nm}{\textup{Nm}}
\newcommand{\Vol}{\textup{Vol}}

\newcommand{\diam}{\textup{diam}}

\begin{document}

\title{On a counting theorem for weakly admissible lattices}

\author{Reynold Fregoli}
\address{Department of Mathematics\\ 
Royal Holloway, University of London\\ 
TW20 0EX Egham\\ 
UK}
\email{Reynold.Fregoli.2017@live.rhul.ac.uk}

\subjclass{Primary ; Secondary }
\date{\today, and in revised form ....}

\dedicatory{}

\keywords{}

\begin{abstract}
We give a precise estimate for the number of lattice points in certain bounded subsets of $\mb{R}^{n}$ that involve ``hyperbolic spikes''
and occur naturally in multiplicative Diophantine approximation.
We use Wilkie's o-minimal structure $\mb{R}_{\exp}$ and expansions thereof to 
formulate our counting result in a general setting. 
We give two different applications of our counting result.
The first one establishes nearly sharp upper bounds for sums of reciprocals of fractional parts, and
thereby sheds light on a question raised by L\^e and Vaaler, extending previous work of Widmer and of the author. 
The second application establishes new examples of linear subspaces of Khintchine type thereby refining  a theorem by Huang and Liu.
For the proof of our counting result we develop a sophisticated partition method which is crucial for further upcoming work
on sums of reciprocals of fractional parts over distorted boxes.
\end{abstract}

\maketitle

\section{Introduction}

\subsection{Notation}

Let $X$ be a set. For any pair of functions $f,g:X\to\mb{R}$, we write $f\ll g$ ($f\gg g$) to mean that there exists a real number $c>0$ such that $f(x)\leq cg(x)$ ($f(x)\geq cg(x)$) for all $x\in X$. If the constant $c$ depends on any parameters, we write them under the symbol $\ll$ ($\gg$). We write $O_{c}(f)$ to indicate a function $g$ such that $g\ll_{c}f$. We use $|\cdot|_{2}$ to denote the Euclidean norm on $\mb{R}^{n}$ and $|\cdot|_{\infty}$ to denote the maximum norm. We write $\mb{N}$ for the set $\{1,2,3,\dotsc\}$ of positive integers. We indicate by $\|x\|$ the distance from any $x\in\mb{R}$ to the nearest integer, i.e., $\min\{|x-n|:n\in\mb{Z}\}$. We denote by $\diam\,X$ the diameter (i.e., $\sup\{|x-y|:x,y\in X\}$) of any set $X\subset\mb{R}^{n}$, and we use $\Vol_{d}(X)$ to indicate its $d$-dimensional Hausdorff measure ($d\in\mb{N}$). When the dimension $d$ is not specified, we assume $d=n$. 

\subsection{Main result}
\label{subsec:weakadm}

In this paper we prove a general counting result for weakly admissible lattices. More specifically, we estimate the number of lattice points lying in the area bounded by a certain compact hypersurface defined in terms of the lattice structure. We generalise this result to any definable set in Wilkie's o-minimal structure $\mb{R}_{\exp}$ lying within the above mentioned hypersurface, and we derive an asymptotic formula for the number of lattice points contained in any such set. Our counting principle allows us to shed light on a question raised by L\^e and Vaaler on the behaviour of certain sums of reciprocals of fractional parts. It also yields a refinement of a theorem proved by Huang and Liu on linear subspaces of Khintchine type.

Before stating the main result, we look at a special case that already captures the main features of our counting principle.  
Let $\mc{M},\mc{N}\in\mb{N}$ and let $\bs{L}\in\mb{R}^{\mc{M}\times \mc{N}}$. We denote by $L_{1},\dotsc,L_{\mc{M}}:\mb{R}^{\mc{N}}\to\mb{R}$ the linear forms induced by the rows of the matrix $\bs{L}$, i.e., the functions
$$L_{i}(x_{1},\dotsc,x_{\mc{N}}):=\sum_{j=1}^{\mc{N}}L_{ij}x_{j}$$
for $i=1,\dotsc, \mc{M}$. We assume throughout the paper that $1$ along with the coefficients $L_{i1},\dotsc,L_{i\mc{N}}$ of each of these linear forms are linearly independent over $\mb{Q}$.
Let $\varepsilon,T\in(0,+\infty)$ and let $Q\in[1,+\infty)$. We consider the set
\begin{multline*}
M(\bs{L},\varepsilon,T,Q):=\left\{(\bs{p},\bs{q})\in\mb{Z}^{\mc{M}}\times\left(\mb{Z}^{\mc{N}}\setminus\{\bs{0}\}\right):\prod_{i=1}^{\mc{M}}\left|L_{i}(\bs{q})+p_{i}\right|<\varepsilon,\right.\\
|L_{i}(\bs{q})+p_{i}|\leq T\ i=1,\dotsc,\mc{M},\ |q_{j}|\leq Q\ j=1,\dotsc,\mc{N}\Bigg\}.
\end{multline*} 
Our goal is to estimate the cardinality of $M(\bs{L},\varepsilon,T,Q)$. To this end, we let
$$ \bs{A}_{\bs{L}}:=\left(\begin{array}{@{}c|c@{}}
    \bs{I}_{\mc{M}} & \!\!\!\!\bs{L} \\\hline
    \bs{0} & \accentset{\phantom{a}}{\bs{I}}_{\mc{N}}
  \end{array}\right)\in\mb{R}^{(\mc{M}+\mc{N})\times(\mc{M}+\mc{N})},$$
where $\bs{I}_{\mc{M}}$ and $\bs{I}_{\mc{N}}$ are identity matrices of size $\mc{M}$ and $\mc{N}$ respectively, and we let $\Lambda_{\bs{L}}:=\bs{A}_{\bs{L}}\mb{Z}^{\mc{M}+\mc{N}}$. We also set
$$Z:=\left\{\bs{x}\in\mb{R}^{\mc{M}}:\prod_{i=1}^{\mc{M}}\left|x_{i}\right|<\varepsilon,\ |x_{i}|\leq T\ i=1,\dotsc,\mc{M}\right\}\times[-Q,Q]^{\mc{N}}.$$
Then, we have
$$\# M(\bs{L},\varepsilon,T,Q)=\#\left((\Lambda_{\bs{L}}\cap Z)\setminus C\right),$$
where $C:=\left\{(\bs{x},\bs{y})\in\mb{R}^{\mc{M}+\mc{N}}:\bs{y}=\bs{0}\right\}$. Therefore, estimating $\# M(\bs{L},\varepsilon,T,Q)$ is equivalent to estimating $\#(\Lambda_{\bs{L}}\cap Z)$, if we exclude the points of $\Lambda_{\bs{L}}$ that lie in $C$.

We now make some assumptions on the lattice $\Lambda_{\bs{L}}$. First, we assume that the distance between the points in $\Lambda_{\bs{L}}\setminus C$ and the coordinate subspaces of $\mb{R}^{\mc{M}+\mc{N}}$ orthogonal to $C$ is always positive. In the worst case, this distance will be decaying as we move away from the origin. We want to control its decay rate in terms of the distance from the origin. Hence, we additionally assume that the distance between the points of $\Lambda_{\bs{L}}\setminus C$ and the coordinate subspaces orthogonal to $C$ is also bounded from below by a certain non-increasing function. To make this precise, we give the following definition.

\begin{definition}
\label{def:semimultbadapp0}
Let $\phi:[1,+\infty)\to(0,1]$ be a non-increasing\footnote{We say that $\phi$ is non-increasing if $\phi(x)\geq\phi(y)$ for all $x<y$.} function. We say that a matrix $\bs{L}\in\mb{R}^{\mc{M}\times\mc{N}}$ is $\phi$-semi multiplicatively badly approximable if
\begin{equation}
\label{eq:semimultbadapp0}
|\bs{q}|_{\infty}^{\mc{N}}\prod_{i=1}^{\mc{M}}\|L_{i}(\bs{q})\|\geq\phi(|\bs{q}|_{\infty})\nonumber
\end{equation}
for all $\bs{q}\in\mb{Z}^{\mc{N}}\setminus\{\bs{0}\}$. If the function $\phi$ can be chosen constant, we say that $\bs{L}$ is semi-multiplicatively badly approximable.
\end{definition}

For the lattice $\Lambda_{\bs{L}}$, the purely arithmetic property introduced in Definition \ref{def:semimultbadapp0} yields the geometric condition described above. Provided this geometric condition is fulfilled, we can derive an asymptotic estimate for $\#\left((\Lambda_{\bs{L}}\cap Z)\setminus C\right)$.

\begin{prop}
\label{cor:cor1}
Let $\bs{L}\in\mb{R}^{\mc{M}\times \mc{N}}$ be a $\phi$-semi multiplicative badly approximable matrix and suppose that $T^{\mc{M}}/\varepsilon\geq e^{\mc{M}}$, where $e=2.71828\dots$ is the base of the natural logarithm. Then, we have
\begin{equation}
\left|\#\left((\Lambda_{\bs{L}}\cap Z)\setminus C\right)-\Vol\,Z\right|\ll_{\mc{M},\mc{N}}(1+T)^{\mc{M}+\mc{N}-1}\log\left(\frac{T^{\mc{M}}}{\varepsilon}\right)^{\mc{M}-1}\left(\frac{\varepsilon Q^{\mc{N}}}{\phi(Q)}\right)^{\frac{\mc{M}+\mc{N}-1}{\mc{M}+\mc{N}}},\nonumber
\end{equation}
where
$$\Vol\,Z=2^{\mc{M}+\mc{N}} Q^{\mc{N}}\left[\varepsilon\log\left(\frac{T^{\mc{M}}}{\varepsilon}\right)^{\mc{M}-1}+T^{\mc{M}}\left(1-\left(1-\frac{\varepsilon}{T^{\mc{M}}}\right)^{\mc{M}-1}\right)\right].$$
\end{prop}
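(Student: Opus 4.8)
The plan is to count $\Lambda_{\bs L}\cap Z$ by cutting $Z$ into boundedly many axis-parallel boxes, estimating the lattice count on each box, and optimising the scale of the subdivision; this realises the partition method advertised in the introduction. I would begin with reductions. First, $\bs A_{\bs L}$ is block upper triangular with identity diagonal blocks, so $\det\Lambda_{\bs L}=1$ and the main term is $\Vol\,Z$. Second, set $\eta:=\phi(Q)/Q^{\mc N}\in(0,1]$ and observe that if $(\bs p,\bs q)\in(\Lambda_{\bs L}\cap Z)\setminus C$ then $\bs q\neq\bs 0$, $|\bs q|_\infty\le Q$, and, since $\|L_i(\bs q)\|\le|L_i(\bs q)+p_i|$ and $\phi$ is non-increasing,
\[
\prod_{i=1}^{\mc M}|L_i(\bs q)+p_i|\ \ge\ \prod_{i=1}^{\mc M}\|L_i(\bs q)\|\ \ge\ \frac{\phi(|\bs q|_\infty)}{|\bs q|_\infty^{\mc N}}\ \ge\ \eta .
\]
Hence every lattice point to be counted lies in the truncated region $Z':=Z\cap\big(\{\bs x\in\mb R^{\mc M}:\prod_i|x_i|\ge\eta\}\times\mb R^{\mc N}\big)$, so $\#((\Lambda_{\bs L}\cap Z)\setminus C)=\#((\Lambda_{\bs L}\cap Z')\setminus C)$; moreover $\Vol\,Z-\Vol\,Z'$ equals $(2Q)^{\mc N}$ times the volume of the part of the spike with $\prod_i|x_i|<\eta$, and the lattice points of $\Lambda_{\bs L}\cap Z'$ lying in $C$ number at most $\#(\mb Z^{\mc M}\cap\{\prod_i|x_i|<\varepsilon,\ |x_i|\le T\})$. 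Both quantities are $\ll_{\mc M,\mc N}$ the claimed error term, so it suffices to estimate $\#(\Lambda_{\bs L}\cap Z')$.

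The combinatorial core is an efficient partition of the spike $S:=\{\bs x\in\mb R^{\mc M}:\eta\le\prod_i|x_i|<\varepsilon,\ |x_i|\le T\}$. I would split $S$ according to which coordinate is largest in absolute value, dyadically decompose along that coordinate, and recurse on the $(\mc M-1)$-dimensional cross-section, which is again a spike with rescaled parameters; the upper constraint $\prod_i|x_i|<\varepsilon$ and the truncation $\prod_i|x_i|\ge\eta$ are precisely what bound the number of dyadic scales met at each level. After $\mc M-1$ steps this produces a partition of $Z'$ (up to a null set) into $\ll_{\mc M}\log(T^{\mc M}/\varepsilon)^{\mc M-1}$ boxes $B=B_0\times[-Q,Q]^{\mc N}$, with $\sum_B\Vol\,B=\Vol\,Z'$; the hypothesis $T^{\mc M}/\varepsilon\ge e^{\mc M}$ forces $\log(T^{\mc M}/\varepsilon)\ge\mc M\ge 1$, so the logarithmic factor is legitimate.

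On a single piece $B=B_0\times[-Q,Q]^{\mc N}$ one has $\#(\Lambda_{\bs L}\cap B)=\sum_{\bs q\in\mb Z^{\mc N},\,|\bs q|_\infty\le Q}\#\big(\mb Z^{\mc M}\cap(B_0-\bs L\bs q)\big)$, and $\#(\mb Z^{\mc M}\cap(B_0-\bs v))=\Vol\,B_0+D_{B_0}(\bs v)$ where $D_{B_0}$ is $1$-periodic in $\bs v$, has zero mean over $\mb T^{\mc M}$, and satisfies $|D_{B_0}|\le\sum_j\prod_{k\neq j}(1+\ell_k^{(0)})$ for the side lengths $\ell_k^{(0)}$ of $B_0$. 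Summing over $\bs q$ gives the main term $(\#\{\bs q\})\Vol\,B_0=(2Q)^{\mc N}\Vol\,B_0+O\big((1+Q)^{\mc N-1}\Vol\,B_0\big)$, while the remaining error $\sum_{\bs q}D_{B_0}(\bs L\bs q)$ is exactly a discrepancy sum for the points $\{\bs L\bs q:|\bs q|_\infty\le Q\}$ against boxes on $\mb T^{\mc M}$; this is where the $\phi$-semi multiplicative badly approximable hypothesis enters — equivalently, the points of $\Lambda_{\bs L}\setminus C$, and dually those of $\Lambda_{\bs L}^\ast$, stay quantitatively away from the coordinate hyperplanes transverse to $C$, which bounds the relevant exponential sums $\prod_{j}\min\big(Q,\|(\bs L^{T}\bs h)_j\|^{-1}\big)$. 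Summing the main terms over all $B$ recovers $\Vol\,Z'$; estimating the discrepancy sums by Erd\H{o}s--Tur\'an and optimising the free cutoff (equivalently, the scale at which the boxes are re-subdivided) balances the number of boxes against the per-box error and produces the exponent $\tfrac{\mc M+\mc N-1}{\mc M+\mc N}$ on $\varepsilon Q^{\mc N}/\phi(Q)=\varepsilon/\eta$, the surviving factors $(1+T)^{\mc M+\mc N-1}\log(T^{\mc M}/\varepsilon)^{\mc M-1}$ recording the diameter of $Z$ and the number of boxes. This yields the asserted bound.

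The formula for $\Vol\,Z$ is a direct computation: by the product structure $\Vol\,Z=(2Q)^{\mc N}\,2^{\mc M}\,\Vol\{\bs x\in(0,T]^{\mc M}:\prod_i x_i<\varepsilon\}$, and integrating out the coordinates one at a time — each integration over a range of length at most $T$ contributing one logarithm — one obtains the stated expression (equivalently, the substitution $x_i=Te^{-t_i}$ turns the integral into $T^{\mc M}$ times the probability that a sum of $\mc M$ independent unit exponentials exceeds $\log(T^{\mc M}/\varepsilon)$). The main obstacle is the second step together with the end of the third: engineering the recursive partition so that the number of boxes is only $\log(T^{\mc M}/\varepsilon)^{\mc M-1}$ while keeping the pieces uniform enough that a per-box discrepancy estimate applies with an explicit dependence on $\phi$, and then pushing the optimisation through so that all factors combine into exactly the claimed error term; the remainder is routine bookkeeping.
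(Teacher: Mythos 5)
Your reduction to the truncated region $Z'=Z\cap\{\prod_i|x_i|\ge\eta\}\times\mb{R}^{\mc N}$ with $\eta=\phi(Q)/Q^{\mc N}$, and your volume computation, are fine; but the combinatorial core fails as stated. A partition of $Z'$ into \emph{axis-parallel boxes} cannot have only $\ll\log(T^{\mc M}/\varepsilon)^{\mc M-1}$ pieces: already for $\mc M=2$, any box $[a_1,b_1]\times[a_2,b_2]$ contained in the positive part of the spike must satisfy $a_1a_2\ge\eta$, so a box meeting the hyperbola $x_1x_2=2\eta$ at $(s,2\eta/s)$ has $a_1\ge s/2$ and hence covers only a bounded multiplicative range of that curve; since the curve spans $x_1\in[2\eta/T,T]$, one needs $\gg\log(T^2/\eta)=\log(T^2/\varepsilon)+\log(\varepsilon Q^{\mc N}/\phi(Q))$ boxes, and the second summand is not controlled by the first. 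With your ``number of boxes $\times$ per-box error'' accounting this inflates the logarithmic factor and does not give the stated bound. The paper avoids this precisely by \emph{not} using boxes: it tiles only the outer level hypersurface $\prod_i|x_i|=\varepsilon$ into $\ll\log(T^{\mc M}/\varepsilon)^{\mc M-1}$ pieces and takes the cones over these tiles down to the coordinate hyperplanes (Proposition \ref{prop:partition}); each cone piece, after a diagonal map $\varphi_k$ with $\sum_i a_i^k=0$, lands inside the cube $\{|x_i|\le\varepsilon\}$, so the piece count depends only on $\varepsilon$, never on $\eta$.

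The second gap is the per-piece estimate. You invoke Erd\H{o}s--Tur\'an and exponential sums $\prod_j\min(Q,\|(\bs L^{T}\bs h)_j\|^{-1})$, but the hypothesis is a multiplicative condition on $\bs L$, not on $\bs L^{T}$ or on the dual lattice, and no transference is supplied; moreover this route is essentially the gap-principle argument of Huang--Liu, which the paper points out loses a factor $1/\phi(Q)$ relative to the claimed bound. The exponent $\tfrac{\mc M+\mc N-1}{\mc M+\mc N}$ does not come from balancing a discrepancy sum: in the paper it comes from rescaling each transformed piece into a cube of side $(\varepsilon Q^{\mc N})^{1/(\mc M+\mc N)}$ and applying the Barroero--Widmer bound $\sum_{s}V_s/(\lambda_1\dotsm\lambda_s)$ (Theorem \ref{thm:latticeest}), with the first minimum of the rescaled lattice bounded below by $\nu(\Lambda_{\bs L},Q)\ge\phi(Q)^{1/(\mc M+\mc N)}$ via weak admissibility (Lemma \ref{lem:weakadmcor} and Proposition \ref{prop:firstmin}); the $s=\mc M+\mc N-1$ term then yields $\left(\varepsilon Q^{\mc N}/\phi(Q)\right)^{(\mc M+\mc N-1)/(\mc M+\mc N)}$. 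Your proposal contains no mechanism producing this exponent. (The paper's actual proof of the Proposition is then short: verify weak admissibility, apply Theorem \ref{thm:mainestimate} with $r=Q$, bound the at most $(2T+1)^{\mc M}$ points in $C$ trivially, and dispose of the case $\varepsilon Q^{\mc N}/\phi(Q)<1$ by showing the intersection is empty.)
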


The case $\mc{M}=2$, $\mc{N}=1$ of Proposition \ref{cor:cor1} was proved by Widmer \cite{Widmer:AsymptoticDiophantine}. We briefly explain how the proof is structured, so that we can highlight the main difficulties. The key idea is to decompose the set $Z$ into approximately $\log\left(T/\varepsilon\right)^{\mc{M}-1}$ subsets. To each such subset we apply a different diagonal linear map and obtain a ball-like shaped set. We then count the points of the corresponding transformation of $\Lambda_{\bs{L}}$ lying in each of these sets. Note that for $\mc{M}=10$, $T=1$, and $\varepsilon=1/10$, we already find more than $1800$ different subsets and linear maps.
The finer the subdivision, the more precise is each single estimate. However, when the subdivision is too fine, we end up summing too many error terms, and controlling the minima of the transformed lattices becomes rather difficult.    
The geometric condition introduced in Definition \ref{def:semimultbadapp0} allows us to give sufficiently good bounds on the first successive minima of these lattices to control the total error term. The essence of this partition method is summarised in Proposition \ref{prop:partition}, which itself is a crucial ingredient in our upcoming work on sums of reciprocals of fractional parts over general boxes.

The strategy that we described above also applies to prove a much more general counting principle, that is the central object of this paper. In lieu of the lattice $\Lambda_{\bs{L}}$ and the set $Z$, we can consider a general weakly admissible lattice in $\mb{R}^{\mc{M}+\mc{N}}$ and a general definable set  contained in $Z$. In this much more general setting, we can prove an analogous counting result.

Before stating this result, we introduce some notation and we recall the main definitions. Let $L\in\mb{N}$ and let $\bs{l}\in\mb{N}^{L}$. We set $V_{\bs{l}}:=\prod_{h=1}^{L}\mb{R}^{l_{h}}$ and we write $\underline{\bs{v}}=(\bs{v}_{1},\dotsc,\bs{v}_{L})$ for any vector $\underline{\bs{v}}$ in $V_{\bs{l}}$. Note that each $\bs{\alpha}\in(0,+\infty)^{L}$ induces a multiplicative norm on the space $V_{\bs{l}}$, given by $\prod_{i=1}^{L}|\bs{v}_{i}|_{2}^{\alpha_{i}}$. We indicate this norm by $\Nm_{\bs{\alpha}}(\underline{\bs{v}})$. The following definition is a generalisation of the property considered in Definition \ref{def:semimultbadapp0}, due to Widmer \cite{Widmer:WeakAdmiss}.

\begin{definition}
\label{def:weakadmin}
Let
$$C:=\left\{\underline{\bs{v}}\in V_{\bs{l}}:\bs{v}_{i}=\bs{0}\ i\in I\right\},$$
for some $\emptyset\neq I\subset\{1,\dotsc,L\}$, and let $\mc{A}:=\alpha_{1}+\dotsb+\alpha_{L}$. We say that a full rank lattice $\Lambda\subset V_{\bs{l}}$ is weakly admissible for the couple $((\bs{l},\bs{\alpha}),C)$ if
$$\nu(\Lambda,\varrho):=\inf\left\{\textup{Nm}_{\bs{\alpha}}(\underline{\bs{v}})^{1/\mc{A}}:\underline{\bs{v}}\in\Lambda\setminus C,\ \left|\underline{\bs{v}}\right|_{2}<\varrho\right\}>0$$
for all $\varrho>0$, where we interpret $\inf\emptyset$ as $+\infty>0$.
\end{definition}

For our purpose, it is convenient to work with the product of two spaces of the form $V_{\bs{l}}$. We therefore adopt a double index notation.
Let $M,N\in\mb{N}$ and let $\mc{S}:=((\bs{m},\bs{n}),(\bs{\beta},\bs{\gamma}))$, where $\bs{m}\in\mb{N}^{M}$, $\bs{n}\in\mb{N}^{N}$, $\bs{\beta}\in(0,+\infty)^{M}$, and $\bs{\gamma}\in(0,+\infty)^{N}$. Let
$\mc{M}:=\sum_{i=1}^{M}m_{i}$ and let $\mc{N}:=\sum_{j=1}^{N}n_{j}$. Let also $\mc{B}:=\sum_{i=1}^{M}\beta_{i}$ and let $\mc{C}:=\sum_{j=1}^{N}\gamma_{j}$.
We consider the vector space $V:=V_{\bs{m}}\times V_{\bs{n}}:=\prod_{i=1}^{M}\mb{R}^{m_{i}}\times\prod_{j=1}^{N}\mb{R}^{n_{i}}$ and we denote its vectors by $(\underline{\bs{x}},\underline{\bs{y}})=(\bs{x}_{1},\dotsc,\bs{x}_{M},\bs{y}_{1},\dotsc,\bs{y}_{N})$. As mentioned above, the vectors $\bs{\beta}$ and $\bs{\gamma}$ induce a multiplicative norm on $V$, given by $\textup{Nm}_{(\bs{\beta},\bs{\gamma})}(\underline{\bs{x}},\underline{\bs{y}}):=\textup{Nm}_{\bs{\beta}}(\underline{\bs{x}})\textup{Nm}_{\bs{\gamma}}(\underline{\bs{y}})$, where $\textup{Nm}_{\bs{\beta}}(\underline{\bs{x}}):=\prod_{i=1}^{M}|\bs{x}_{i}|_{2}^{\beta_{i}}$ and $\textup{Nm}_{\bs{\gamma}}(\underline{\bs{y}}):=\prod_{j=1}^{N}|\bs{y}_{j}|_{2}^{\gamma_{j}}$. Throughout this section, we fix a subspace $C\subset V$ of the form
$$C:=\left\{(\underline{\bs{x}},\underline{\bs{y}})\in V:\bs{x}_{i}=\bs{0}\ i\in I,\bs{y}_{j}=\bs{0}\ j\in J\right\},$$
where $I\subseteq\{1,\dotsc,N\}$, $J\subseteq\{1,\dotsc,M\}$, and $I\cup J\neq\emptyset$.

Now, we introduce a generalisation of the set $Z$ appearing in Proposition \ref{cor:cor1}. We let
$$\mc{H}:=\left\{(\underline{\bs{x}},\varepsilon,T)\in V_{\bs{m}}\times(0,+\infty)^{2}:\textup{Nm}_{\bs{\beta}}(\underline{\bs{x}})^{\frac{1}{\mc{B}}}<\varepsilon,\ |\bs{x}_{i}|_{2}\leq T\ i=1,\dotsc,M\right\},$$
and
$$\mc{R}:=\left\{(\underline{\bs{y}},\bs{Q})\in V_{\bs{n}}\times\mb{R}^{N}:|\bs{y}_{j}|_{2}\leq Q_{j}\ j=1,\dotsc,N\right\}.$$
Then, we set $\mc{Z}:=\mc{H}\times\mc{R}$. Finally, for $\bs{Q}\in(0,+\infty)^{N}$ we define
$$Q:=\left(\prod_{j=1}^{N}Q_{j}^{\gamma_{j}}\right)^{1/\mc{C}},$$
and for any $\Gamma\subset V_{\bs{l}}$ we define $\lambda_{1}(\Gamma):=\inf\{|\bs{v}|_{2}:\bs{v}\in\Gamma\setminus\{\bs{0}\}\}$.

To make our counting result applicable to a large class of sets we use o-minimal structures. For the convenience of the reader we 
quickly recall the basic definitions. 

\begin{definition}
\label{def:definablestruc}
A structure over $\mb{R}$ is a sequence $\mathfrak{S}=(\mathfrak{S}_{n})_{n\in\mb{N}}$ of families of subsets of $\mb{R}^{n}$ such that for each $n$:\vspace{2mm}
\begin{itemize}
\item[$i)$] $\mathfrak{S}_{n}$ is a boolean algebra of subsets of $\mb{R}^{n}$ (under the usual set-theoretic operations);\vspace{2mm}
\item[$ii)$] $\mathfrak{S}_{n}$ contains every semi-algebraic subset of $\mb{R}^{n}$;\vspace{2mm}
\item[$iii)$] if $A\in\mathfrak{S}_{n}$ and $B\in\mathfrak{S}_{m}$, then $A\times B\in\mathfrak{S}_{n+m}$;\vspace{2mm}
\item[$iv)$] if $\pi:\mb{R}^{n+m}\to\mb{R}^{n}$ is the projection onto the first $n$ coordinates and $A\in\mathfrak{S}_{n+m}$, then $\pi(A)\in\mathfrak{S}_{n}$.\vspace{2mm}
\end{itemize}
A structure $\mathfrak{S}$ over $\mb{R}$ is said to be $o$-minimal if additionally:
\begin{itemize}
\item[$v)$] the boundary of any set in $\mathfrak{S}_{1}$ is finite.
\end{itemize}
\end{definition}

A set $S\subset\mb{R}^{n}$ is definable in the structure $\mathfrak{S}$ if $S\in\mathfrak{S}_{n}$. A map $f:A\to B$ is definable if its graph $\Gamma(f):=\{(\bs{x},f(\bs{x})):\bs{x}\in A\}\subset A\times B$ is a definable set.

Let $t\in\mb{N}$ and let $\mc{W}\subset V_{\bs{l}}\times\mb{R}^{t}$ be a definable set. 
We call $\mc{W}$ a definable family in $V_{\bs{l}}$, and we call the variables $\bs{\tau}\in\mb{R}^{t}$ parameters of $\mc{W}$. For $\bs{\tau}\in\mb{R}^{t}$ we call the set
$$W_{\bs{\tau}}:=\left\{\underline{\bs{v}}\in V_{\bs{l}}:(\underline{\bs{v}},\bs{\tau})\in\mc{W}\right\}$$
the fibre of $\mc{W}$ above $\bs{\tau}$.
In our setting, functions such as $f(x)=x^r=\exp(r\log x)$ (for any real r>0) and $\log x$ on $(0,+\infty)$ need to be definable.
Therefore, we require that the o-minimal structure $\mathfrak{S}$ we are working with extends Wilkie's o-minimal structure $\mb{R}_{\exp}$ \cite{Wilkie:Model}, i.e., we require that each set definable 
in $\mb{R}_{\exp}$ is also definable in $\mathfrak{S}$.

We recall that every subset of $V_{\bs{l}}\times\mb{R}^{t}$ of the form 
\begin{equation}
\mc{W}=\left\{(\underline{\bs{v}},\bs{\tau})\in V_{\bs{l}}\times\mb{R}^{t}:\mf{F}(\underline{\bs{v}},\bs{\tau})\geq\bs{0}\ (>\bs{0})\right\},\nonumber
\end{equation}
where $\mf{F}$ is a finite system of functions in the variables $\underline{\bs{v}}$ and $\bs{\tau}$, obtained by the (suitably interpreted) composition of polynomials, exponential functions $\exp:\mb{R}\to\mb{R}$, and logarithms $\log:(0,+\infty)\to\mb{R}$, is definable in $\mb{R}_{\exp}$.

From now on, we see the set $\mc{Z}$ as a definable family in $\mb{R}_{\exp}$, with parameters $\bs{\eta}:=(\varepsilon,T,\bs{Q})\in(0,+\infty)^{2+N}$. In analogy with the above, we indicate its fibres by $Z_{\bs{\eta}}$. We can now state our main theorem.

\begin{theorem}
\label{thm:mainestimate}
Let $\Lambda\subset V$ be a weakly admissible lattice for the couple $(\mc{S},C)$ and let $\mc{W}\subset V\times\mb{R}^{t}$ be definable in an o-minimal structure expanding 
$\mb{R}_{\exp}$. Suppose that for all $\bs{\tau}\in\mb{R}^{t}$ there exists $\bs{\eta}(\bs{\tau})=(\varepsilon,T,\bs{Q})\in(0,+\infty)^{2+N}$ such that $W_{\bs{\tau}}\subset Z_{\bs{\eta}(\bs{\tau})}$. Then, for all $\bs{\tau}\in\mb{R}^{t}$ and all choices of $\bs{\eta}(\bs{\tau})$ with $T/\varepsilon> e$ (where $e=2.71828\dots$ is the base of the natural logarithm) we have
\begin{multline}
\left|\#(\Lambda\cap W_{\bs{\tau}})-\frac{\Vol\,W_{\bs{\tau}}}{\det\Lambda}\right|\ll_{\mc{W},\bs{\beta},\bs{\gamma}} \\
\inf_{0<r\leq\diam(Z_{\bs{\eta}(\bs{\tau})})}\log\left(\frac{T}{\varepsilon}\right)^{M-1}\left(\frac{\left(\varepsilon^{\mc{B}} Q^{\mc{C}}\right)^{\frac{1}{\mc{B}+\mc{C}}}}{\nu(\Lambda,r)}+\frac{\diam\left(Z_{\bs{\eta}(\bs{\tau})}\right)}{r}+\frac{\diam\left(Z_{\bs{\eta}(\bs{\tau})}\cap C\right)}{\lambda_{1}(\Lambda\cap C)}\right)^{\mc{M}+\mc{N}-1}\nonumber.
\end{multline} 
\end{theorem}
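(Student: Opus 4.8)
The plan is to reduce the general definable-set count to a finite union of lattice-point counts in ``ball-like'' regions, following the Widmer-style strategy sketched after Proposition~\ref{cor:cor1}, and then to invoke an o-minimal uniformity result to handle the definable family $\mc{W}$. Concretely: first I would dispose of the o-minimal input. By the theory of definable families in $\mb{R}_{\exp}$ (cell decomposition, uniform bounds on the number of connected components and on volumes/diameters of fibres), the fibres $W_{\bs{\tau}}$ have complexity bounded uniformly in $\bs{\tau}$; in particular $\Vol\,W_{\bs{\tau}}$ and the relevant boundary-measure quantities are controlled by constants depending only on $\mc{W}$. This lets me replace $W_{\bs{\tau}}$ by $Z_{\bs{\eta}(\bs{\tau})}$ up to a controlled error, or more precisely reduces the counting problem to estimating $\#(\Lambda\cap W_{\bs{\tau}})$ against $\Vol\,W_{\bs{\tau}}$ via a Lipschitz-type parametrisation of the boundary $\partial W_{\bs{\tau}}$, whose $(\mc{M}+\mc{N}-1)$-dimensional ``size'' is uniformly bounded in the definable structure.

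The geometric heart is the partition of $\mc{H}$ (equivalently the ``spiky'' part of $Z_{\bs{\eta}}$). I would slice the region $\{\textup{Nm}_{\bs{\beta}}(\underline{\bs{x}})^{1/\mc{B}}<\varepsilon,\ |\bs{x}_i|_2\le T\}$ into roughly $\log(T/\varepsilon)^{M-1}$ dyadic-type boxes $B_{\bs{k}}$, indexed by the exponent profile of the blocks $|\bs{x}_i|_2$, and on each such box apply a diagonal linear map $g_{\bs{k}}$ that rescales each $\bs{x}_i$-block to comparable size, turning $B_{\bs{k}}\times[-\bs{Q},\bs{Q}]$ into a set of bounded ``aspect ratio'' whose volume is comparable to $(\varepsilon^{\mc{B}}Q^{\mc{C}})$-type quantities. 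On each piece I invoke the standard Lipschitz lattice-point counting principle (e.g.\ the Davenport--Widmer bound): $\#(g_{\bs{k}}\Lambda\cap g_{\bs{k}}B_{\bs{k}}) = \Vol(g_{\bs{k}}B_{\bs{k}})/\det(g_{\bs{k}}\Lambda) + O(\text{sum over lower-dimensional faces of }\Vol_d/\lambda_1\cdots\lambda_d)$, where the successive minima of $g_{\bs{k}}\Lambda$ are bounded below using weak admissibility. This is exactly where $\nu(\Lambda,r)$ enters: the weak-admissibility lower bound $\textup{Nm}_{(\bs{\beta},\bs{\gamma})}(\underline{\bs{v}})^{1/\mc{A}}\ge\nu(\Lambda,r)$ for $\underline{\bs{v}}\in\Lambda\setminus C$ with $|\underline{\bs{v}}|_2<r$ forces the first minimum of each rescaled lattice (restricted to the relevant scale $r$) to be $\gg \nu(\Lambda,r)/(\varepsilon^{\mc{B}}Q^{\mc{C}})^{1/(\mc{B}+\mc{C})}$, which produces the first term in the bound; the contribution of points within distance $\sim r$ of $C$, and of points actually in $C$, account for the $\diam(Z)/r$ and $\diam(Z\cap C)/\lambda_1(\Lambda\cap C)$ terms respectively, and the infimum over $r$ reflects the freedom in choosing the truncation radius. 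Summing the main terms over the $\asymp\log(T/\varepsilon)^{M-1}$ boxes reconstructs $\Vol\,Z_{\bs{\eta}}$ (hence $\Vol\,W_{\bs{\tau}}$ after the o-minimal comparison), and summing the error terms — each raised to the power $\mc{M}+\mc{N}-1$ coming from the lower-dimensional faces — gives the stated bound, the logarithmic factor being the box count.

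I would organise the write-up as: (1) an o-minimal preprocessing lemma giving uniform Lipschitz parametrisations and volume/diameter bounds for the family $\{W_{\bs{\tau}}\}$; (2) the explicit partition of $\mc{H}$ and the diagonal maps, with the bookkeeping for $\Vol(g_{\bs{k}}B_{\bs{k}})$ and $\det(g_{\bs{k}}\Lambda)=\det\Lambda$; (3) the successive-minima estimates for $g_{\bs{k}}\Lambda$ from weak admissibility (this is the quantitative core, Proposition~\ref{prop:partition} presumably); (4) application of the Lipschitz counting principle on each piece, separating the points near $C$; and (5) summation of main and error terms. The main obstacle, as the authors themselves flag, is step~(3): controlling the successive minima of \emph{all} $\asymp\log(T/\varepsilon)^{M-1}$ rescaled lattices simultaneously well enough that, after being summed $\log(T/\varepsilon)^{M-1}$ times and raised to the power $\mc{M}+\mc{N}-1$, the total error stays at the claimed size. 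A too-fine partition over-counts error; a too-coarse one loses precision on each piece — so the delicate point is calibrating the slicing to the decay rate of $\phi$ (here abstracted to $\nu(\Lambda,r)$) and showing weak admissibility alone suffices to keep every rescaled lattice uniformly non-degenerate at the appropriate scale. A secondary technical nuisance is matching the o-minimal error term (boundary ``area'' of $W_{\bs{\tau}}$) with the lattice error term so that both are absorbed into the single expression with the infimum over $r$.
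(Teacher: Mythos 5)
Your proposal follows essentially the same route as the paper: partition the hyperbolic region into $\asymp\log(T/\varepsilon)^{M-1}$ definable pieces, rescale each by a unimodular diagonal map, apply the Barroero--Widmer o-minimal counting theorem on each piece with the first successive minimum of the rescaled lattice bounded below via weak admissibility, and treat the lattice points lying in $C$ separately. The one slight misattribution is the origin of the $\diam(Z)/r$ term: in the paper it accounts for lattice points at distance at least $r$ from the origin, where $\nu(\Lambda,r)$ gives no information, rather than for points near $C$.
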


We recall that the special case $\mc{M}=1$ of Theorem \ref{thm:mainestimate} was proved by Widmer \cite[Theorem 2.1]{Widmer:WeakAdmiss}.

\subsection{Applications I}
\label{subsec:DiophApprox}

Theorem \ref{thm:mainestimate} has some interesting applications in multiplicative Diophantine approximation. Let $\bs{Q}\in(0,+\infty)^{\mc{N}}$ and let $X:=\prod_{j=1}^{\mc{N}}[-Q_{j},Q_{j}]$. We set
\begin{equation}
S_{\bs{L}}(\bs{Q}):=\sum_{\substack{\bs{q}\in X\cap\mb{Z}^{\mc{N}}\setminus\{\bs{0}\}}}\prod_{i=1}^{\mc{M}} \|L_{i}(\bs{q})\|^{-1}. \nonumber
\end{equation}
The function $S_{\bs{L}}(\bs{Q})$ is of major importance in several branches of Diophantine Approximation and Geometry of Numbers. For instance, Kuipers and Niederreiter use it to control the discrepancy of the sequence $\{q\bs{L}\}_{q\in\mb{Z}}$ via the Erd\H{o}s-Turan inequality \cite[p.122,129,131]{KuipersNiederr:UnifDistr}. Hardy and Littlewood use it to count the number of lattice points contained in certain polygons \cite{HarLi:SomeProblems1}\cite{HarLi:SomeProblems2}. Beresnevich, Haynes, and Velani, work out very precise estimates for $S_{\bs{L}}(\bs{Q})$ in the one dimensional inhomogeneous setting \cite{Beresnevich:Somsofandmult}. Huang and Liu show how estimates of $S_{\bs{L}}(\bs{Q})$ can be used to solve the convergence case of the generalised Baker-Schmidt problem for simultaneous approximation on certain affine subspaces \cite{HuangLiu:Simultaneous}.  

In this section, we focus on a question raised by L\^e and Vaaler \cite{LeVaaler:Sumsof}. L\^e and Vaaler show that for $Q:=(Q_{1}\dotsm Q_{\mc{N}})^{1/\mc{N}}\geq 1$ it holds
\begin{equation}
\label{eq:intro2}
S_{\bs{L}}(\bs{Q})\gg_{\mc{M},\mc{N}}Q^{\mc{N}}(\log Q)^{\mc{M}},
\end{equation}
independently of the choice of the matrix $\bs{L}$ \cite[Corollary 1.2]{LeVaaler:Sumsof}. They also ask whether the estimate in (\ref{eq:intro2}) is sharp, i.e., whether there exists a matrix $\bs{L}$ such that
\begin{equation}
\label{eq:intro3}
S_{\bs{L}}(\bs{Q})\ll Q^{\mc{N}}(\log Q)^{\mc{M}}.
\end{equation}
L\^e and Vaaler themselves show that (\ref{eq:intro3}) holds true whenever the matrix $\bs{L}$ is multiplicatively badly approximable \cite[Theorem 2.1]{LeVaaler:Sumsof}. However, multiplicatively badly approximable matrices seem unlikely to exist for $\mc{M}+\mc{N}\geq 3$, since each of them would provide a counterexample to the Littlewood conjecture. In the present section we prove a general estimate for the function $S_{\bs{L}}(\bs{Q})$, when $Q_{1}=Q_{2}=\dotsb =Q_{\mc{N}}$. This estimate shows that, in the special case $Q_{1}=Q_{2}=\dotsb =Q_{\mc{N}}$, L\^e and Vaaler's hypothesis can be significantly weakened.
Let us first recap some definitions (see \cite{Bugeaud:Multiplicative}\cite{Schmidt:BadlyApprox} for a deeper insight).
\begin{definition}
\label{def:multaddbadapp}
Let $\phi:[1,+\infty)\to (0,1]$ be non-increasing. We say that a matrix $\bs{L}\in\mb{R}^{\mc{M}\times\mc{N}}$ is $\phi$-additively badly approximable if
\begin{equation}
\label{eq:condition1add}
|\bs{q}|_{\infty}^{\mc{N}}\left(\max_{i=1}^{\mc{M}}\|L_{i}(\bs{q})\|\right)^{\mc{M}}\geq\phi(|\bs{q}|_{\infty})
\end{equation}
for all $\bs{q}\in\mb{Z}^{\mc{N}}\setminus\{\bs{0}\}$. We say that $\bs{L}$ is $\phi$-multiplicatively badly approximable if
\begin{equation}
\label{eq:condition1mult}
\prod_{j=1}^{\mc{N}}\max\left\{1,\left|q_{j}\right|\right\}\prod_{i=1}^{\mc{M}}\|L_{i}(\bs{q})\|\geq\phi\left(\prod_{j=1}^{\mc{N}}\max\left\{1,\left|q_{j}\right|\right\}^{\frac{1}{\mc{N}}}\right)
\end{equation}
for all $\bs{q}\in\mb{Z}^{\mc{N}}\setminus\{\bs{0}\}$.
If $\phi$ can be chosen constant in either case, we say that $\bs{L}$ is additively or multiplicatively badly approximable.
\end{definition}

The additive and multiplicative conditions are very different. Schmidt \cite{Schmidt:BadlyApprox} showed that the set of additively badly approximable matrices in $\mb{R}^{\mc{M}\times\mc{N}}$ has full Hausdorff dimension. On the other hand, as mentioned above, multiplicatively badly approximable matrices are unlikely to exist for $\mc{M}+\mc{N}\geq 3$.

We introduce a new condition, which is hybrid between (\ref{eq:condition1add}) and (\ref{eq:condition1mult}).

\begin{definition}
Let $\phi$ be as in Definition \ref{def:multaddbadapp}. We say that a matrix $\bs{L}\in\mb{R}^{\mc{M}\times\mc{N}}$ is $\phi$-semi multiplicatively badly approximable if
\begin{equation}
\label{eq:condition1}
|\bs{q}|_{\infty}^{\mc{N}}\prod_{i=1}^{\mc{M}}\|L_{i}(\bs{q})\|\geq\phi(|\bs{q}|_{\infty})
\end{equation}
for all $\bs{q}\in\mb{Z}^{\mc{N}}\setminus\{\bs{0}\}$. If the function $\phi$ can be chosen constant, we say that $\bs{L}$ is semi-multiplicatively badly approximable.
\end{definition}

Note that (\ref{eq:condition1mult})$\Rightarrow$(\ref{eq:condition1})$\Rightarrow$(\ref{eq:condition1add}).
Now, under the additional hypothesis $Q_{1}=\dotsb =Q_{\mc{N}}$, we have the following estimate for $S_{\bs{L}}(\bs{Q})$. 

\begin{cor}
\label{cor:cor2}
Let $\bs{L}\in\mb{R}^{\mc{M}\times\mc{N}}$ be a $\phi$-semi multiplicatively badly approximable matrix. Then, for $Q\geq 2$ we have
\begin{equation}
\sum_{\substack{\bs{q}\in[-Q,Q]^{\mc{N}}\\ \cap\ \mb{Z}^{\mc{N}}\setminus\{\bs{0}\}}}\prod_{i=1}^{\mc{M}} \|L_{i}(\bs{q})\|^{-1}\ll_{\mc{M},\mc{N}}Q^{\mc{N}}\log\left(\frac{Q^{\mc{N}}}{\phi(Q)}\right)^{\mc{M}}+\frac{Q^{\mc{N}}}{\phi(Q)}\log\left(\frac{Q^{\mc{N}}}{\phi(Q)}\right)^{\mc{M}-1}.\nonumber
\end{equation}
\end{cor}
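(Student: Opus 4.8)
The plan is to deduce Corollary \ref{cor:cor2} from Proposition \ref{cor:cor1} by a dyadic decomposition in the size of $\prod_{i=1}^{\mc{M}}\|L_i(\bs q)\|$, which is exactly the kind of sum that the set $M(\bs L,\varepsilon,T,Q)$ counts. Concretely, for $\bs q$ ranging over $[-Q,Q]^{\mc N}\cap\mb Z^{\mc N}\setminus\{\bs 0\}$, choose for each $i$ the nearest integer $p_i$ to $-L_i(\bs q)$, so that $|L_i(\bs q)+p_i|=\|L_i(\bs q)\|$. The $\phi$-semi multiplicative badly approximable hypothesis (\ref{eq:condition1}) gives the lower bound $\prod_{i=1}^{\mc M}\|L_i(\bs q)\|\geq \phi(|\bs q|_\infty)/|\bs q|_\infty^{\mc N}\geq \phi(Q)/Q^{\mc N}=:\varepsilon_0$, and trivially each $\|L_i(\bs q)\|\leq 1/2\leq 1$. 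So every contributing term satisfies $\varepsilon_0\leq\prod_{i}\|L_i(\bs q)\|\leq 1$. First I would split the range $[\varepsilon_0,1]$ into dyadic blocks $2^{-k-1}<\prod_i\|L_i(\bs q)\|\leq 2^{-k}$ for $k=0,1,\dots,K$ with $K\asymp\log(Q^{\mc N}/\phi(Q))$; on the $k$-th block the summand is $\ll 2^{k}$, and the number of $\bs q$'s (together with the associated $\bs p$) contributing is at most $\#M(\bs L,2^{-k},1,Q)$, since there $\prod_i|L_i(\bs q)+p_i|<2^{-k}$ and $|L_i(\bs q)+p_i|\leq 1/2\leq T$ with $T=1$.

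Next I would estimate $\#M(\bs L,2^{-k},1,Q)=\#((\Lambda_{\bs L}\cap Z)\setminus C)$ using Proposition \ref{cor:cor1} with $\mc M,\mc N$, $\varepsilon=2^{-k}$, $T=1$, so $T^{\mc M}/\varepsilon=2^k$; the hypothesis $T^{\mc M}/\varepsilon\geq e^{\mc M}$ holds once $k\geq\mc M$, and the finitely many small $k<\mc M$ contribute only $O_{\mc M,\mc N}(Q^{\mc N})$ (bounded summand times a main-term-sized count), which is absorbed. For $k\geq\mc M$ the proposition gives
$$\#M(\bs L,2^{-k},1,Q)\ll_{\mc M,\mc N} Q^{\mc N}\!\left(2^{-k}k^{\mc M-1}+ (1-(1-2^{-k})^{\mc M-1})\right) + 2^{\mc M+\mc N-1}k^{\mc M-1}\!\left(\frac{2^{-k}Q^{\mc N}}{\phi(Q)}\right)^{\frac{\mc M+\mc N-1}{\mc M+\mc N}}.$$
Since $1-(1-2^{-k})^{\mc M-1}\ll_{\mc M} 2^{-k}$, the volume main term is $\ll_{\mc M,\mc N} Q^{\mc N}2^{-k}k^{\mc M-1}$. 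Multiplying by $2^k$ and summing over $0\leq k\leq K$: the main-term part yields $\ll_{\mc M,\mc N} Q^{\mc N}\sum_{k\leq K}k^{\mc M-1}\ll_{\mc M,\mc N}Q^{\mc N}K^{\mc M}$, i.e. the first term $Q^{\mc N}\log(Q^{\mc N}/\phi(Q))^{\mc M}$. The error part yields $\ll_{\mc M,\mc N}(Q^{\mc N}/\phi(Q))^{\frac{\mc M+\mc N-1}{\mc M+\mc N}}\sum_{k\leq K}k^{\mc M-1}2^{k/(\mc M+\mc N)}$; the geometric factor $2^{k/(\mc M+\mc N)}$ dominates, so the sum is $\ll_{\mc M,\mc N}K^{\mc M-1}2^{K/(\mc M+\mc N)}$, and since $2^{K}\asymp Q^{\mc N}/\phi(Q)$ this is $\ll_{\mc M,\mc N}(Q^{\mc N}/\phi(Q))\log(Q^{\mc N}/\phi(Q))^{\mc M-1}$, the second term. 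Combining the two pieces gives exactly the claimed bound. (A small bookkeeping point: the $O(Q^{\mc N})$ "missing half-integer'' boxes and the $k<\mc M$ terms are all dominated by the first term since $Q\geq 2$.)

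The step I expect to be the main obstacle is the summation of the error terms over $k$: one must check that the geometric growth $2^{k/(\mc M+\mc N)}$ in the error bound of Proposition \ref{cor:cor1} is the dominant behaviour (so the sum is essentially its last term) rather than something that would produce an extra logarithmic loss, and that the polynomial factor $k^{\mc M-1}$ is correctly tracked through this — this is where the exponent $\mc M-1$ (rather than $\mc M$) in the second term of the corollary comes from. A secondary subtlety is confirming that the upper bound for $\#M$ is legitimate even though $M(\bs L,\varepsilon,T,Q)$ excludes $\bs q=\bs 0$ exactly as the sum in Corollary \ref{cor:cor2} does (so the removal of $C$ matches), and that choosing $T=1$ — not the a priori smaller value $1/2$ — is harmless since enlarging $T$ only enlarges the counted set, giving a valid upper bound; one should also note that the $\mbq$-linear independence assumption on $1,L_{i1},\dots,L_{i\mc N}$, standing throughout the paper, guarantees $\|L_i(\bs q)\|>0$ so that every reciprocal in $S_{\bs L}(\bs Q)$ is finite.
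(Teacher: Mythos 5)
Your proposal is correct and follows essentially the same route as the paper: the same dyadic decomposition into the sets $M(\bs L,2^{-k},T,Q)$, the same truncation at $k\asymp\log_2(Q^{\mc N}/\phi(Q))$ via the semi-multiplicative hypothesis, the same trivial treatment of the finitely many small $k$, and the same two summation estimates $\sum_{k\le K}k^{\mc M-1}\le K^{\mc M}$ and $\sum_{k\le K}k^{\mc M-1}2^{k/(\mc M+\mc N)}\ll K^{\mc M-1}2^{K/(\mc M+\mc N)}$ (the paper takes $T=1/2$ rather than $T=1$, which changes nothing). The only nit is that with $T=1$ the hypothesis $T^{\mc M}/\varepsilon\ge e^{\mc M}$ requires $k\ge\mc M\log_2 e$ rather than $k\ge\mc M$, but this is still a bounded range absorbed into the trivial $O_{\mc M,\mc N}(Q^{\mc N})$ contribution.
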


Corollary \ref{cor:cor2} immediately implies that, in the special case $Q_{1}=\dotsb =Q_{\mc{N}}$, all matrices $\bs{L}$ that are $\phi$-semi multiplicatively badly approximable with $\phi(x)\gg_{\mc{M},\mc{N}} (\log x)^{-1}$ satisfy (\ref{eq:intro3}).

The case $\mc{M}=2$, $\mc{N}=1$ of Corollary \ref{cor:cor2} was proved by Widmer \cite{Widmer:AsymptoticDiophantine}, whereas the analogous result for $\mc{M}=1$ was proved by the author \cite{Fregoli:Sumsof}, by using Widmer's $\mc{M}=1$ case of Theorem \ref{thm:mainestimate} \cite[Theorem 2.1]{Widmer:WeakAdmiss}. Huang and Liu addressed the general case \cite[Theorem 6]{HuangLiu:Simultaneous} by using the well-known gap principle \cite[Proof of Lemma 3.3 p.123]{KuipersNiederr:UnifDistr}. However, Huang and Liu's Theorem 6 contains an extra factor of $1/\phi(Q)$ in the error term, which we could get rid of in Corollary \ref{cor:cor2}.

Unfortunately, the existence of $\phi$-semi multiplicatively badly approximable matrices with $\phi(x)\gg_{\mc{M},\mc{N}}(\log x)^{-1}$ has not yet been established, except for $\mc{M}=\mc{N}=1$. Despite this, at least in dimension $2$, there is some heuristic evidence for their existence. For $\mc{M}=2$, $\mc{N}=1$ Badziahin showed that condition (\ref{eq:condition1mult}), with $\phi(x)=c_{\bs{L}}(\log x\log\log x)^{-1}$ ($c_{\bs{L}}>0$ sufficiently small), holds true for a set of vectors of full Hausdorff dimension \cite{Badziahin:OnMultiplicatively}.
It follows from Corollary \ref{cor:cor2} that for $\mc{M}=2$, $\mc{N}=1$ the set of matrices $\bs{L}$ such that $S_{\bs{L}}(\bs{Q})\ll_{\bs{L}}Q(\log Q)^{2}\log\log Q$ has full Hausdorff dimension. Badziahin and Velani also conjectured that the set of $2\times 1$ $\phi$-multiplicatively badly approximable matrices, with $\phi(x)=c_{\bs{L}}(\log x)^{-1}$ ($c_{\bs{L}}>0$ sufficiently small), has full Hausdorff dimension \cite[Conjecture 1]{Badziahin:OnMultiplicatively}. To the best of our knowledge, nothing is known in higher dimension.

\subsection{Applications II} 

Let $\psi:[1,+\infty)\to(0,1]$. We consider the set
$$\mathscr{S}_{\mc{N}}(\psi):=\left\{\bs{x}\in\mb{R}^{\mc{N}}:\exists\ \mbox{i.m. }q\in\mb{N}\ \mbox{such that }\max_{i=1}^{\mc{N}}\|qx_{i}\|<\psi(q)\right\},$$
where i.m. stands for infinitely many. The set $\mathscr{S}_{\mc{N}}$ is said to be the set of simultaneously $\psi$-approximable points. A well-known theorem of Khintchine \cite{Khintchine:Zurmetrischen} relates the Lebesgue measure of the set $\mathscr{S}_{\mc{N}}(\psi)$ to the convergence of the sum $\sum_{q=1}^{+\infty}\psi(q)^{\mc{N}}$.

Khintchine showed that if $\psi$ is non-increasing and $\sum_{q=1}^{+\infty}\psi(q)^{\mc{N}}$ converges, we have
\\ $\Vol\left(\mathscr{S}_{\mc{N}}(\psi)\right)=0$, whereas if $\psi$ is non-increasing and $\sum_{q=1}^{+\infty}\psi(q)^{n}$ diverges, we have \\ $\Vol\left(\mathscr{S}_{\mc{N}}(\psi)\right)=+\infty$.
It is well known that this theorem becomes false when we restrict to certain submanifolds of $\mb{R}^{\mc{N}}$, such as proper rational affine subspaces. This leads naturally to the following definition.

\begin{definition}
\label{def:Khintchine}
Let $\mc{M}\subset\mb{R}^{\mc{N}}$ be a submanifold of dimension $d$. We say that $\mc{M}$ is of Khintchine type for convergence if for all non-increasing functions $\psi:[1,+\infty)\to(0,1]$ we have
$$\sum_{q=1}^{+\infty}\psi(q)^{\mc{N}}<+\infty\Rightarrow\Vol_{d}\left(\mathscr{S}_{\mc{N}}(\psi)\cap\mc{M}\right)=0.$$
We say that $\mc{M}$ is of Khintchine type for divergence if for all non-increasing functions $\psi:[1,+\infty)\to(0,1]$ we have
$$\sum_{q=1}^{+\infty}\psi(q)^{\mc{N}}=+\infty\Rightarrow\Vol_{d}\left(\mathscr{S}_{\mc{N}}(\psi)\cap\mc{M}\right)=+\infty.$$
If both conditions hold, we simply say that $\mc{M}$ is of Khintchine type.
\end{definition}

We recall that there is also a notion of strong Khintchine type submanifold in $\mb{R}^{\mc{N}}$, i.e., a submanifold for which Definition \ref{def:Khintchine} holds without the assumption that the function $\psi$ is non-increasing.

It has been shown that many non-degenerate submanifolds (i.e., those that in some sense deviate from a hyperplane at each point) are of strong Khintchine type for convergence \cite{Huang:Diophantineapprxontheparabola},\cite{Simmons:Somemanifolds}. It seems natural then, to ask whether the non-degeneracy condition is necessary for a submanifold to be of (strong) Khintchine type. The answer to this question is no, and indeed it turns out that even some proper affine subspaces of $\mb{R}^{\mc{N}}$ are of strong Khintchine type \cite{Kovalevskaya:Ontheexact},\cite{Ramirez:Khintchine}. So, what makes an affine subspace of (strong) Khintchine type? Since each affine subspace is defined by a real matrix, it appears interesting to try and establish a link between the Diophantine type of this matrix and the properties of the subspace in terms of the validity of the Khintchine Theorem.
In a very recent paper \cite{HuangLiu:Simultaneous} Huang and Liu made some progress in this direction.

\begin{definition}
Let $\bs{L}\in\mb{R}^{\mc{M}\times\mc{N}}$. We set
$$\omega_{m}(\bs{L}):=\sup\left\{\gamma\in\mb{R}:\prod_{j=1}^{\mc{M}}\left\|\bs{L}_{j}(\bs{q})\right\|\leq|\bs{q}|_{\infty}^{-\gamma}\ \mbox{has i.m. solutions }\bs{q}\in\mb{Z}^{\mc{N}}\setminus\{\bs{0}\}\right\}.$$
We call $\omega_{m}(\bs{L})$ the multiplicative exponent of the matrix $\bs{L}$. 
\end{definition}

\noindent Observe that for $\phi_{\gamma}(x):=x^{-\gamma}$ ($\gamma\in\mb{R}$) we have 
\begin{equation}
\omega_{m}(\bs{L})=\inf\{\gamma\in\mb{R}:\exists c>0\ \mbox{such that }\bs{L}\ \mbox{is }c\phi_{(\gamma-\mc{N})}\mbox{-semi multiplicatively badly approximable}\}\nonumber.
\end{equation} 

Let $d\geq 1$ be an integer, and let $\bs{A}\in\mb{R}^{d\times(\mc{N}-d)}$. Let also $\bs{\alpha}_{0}\in\mb{R}^{\mc{N}-d}$. We define
\begin{equation}
\label{eq:tilde}
\tilde{\bs{A}}:=\binom{\bs{\alpha}_{0}}{\bs{A}}\in\mb{R}^{(d+1)\times(\mc{N}-d)}\quad\mbox{and}\quad\tilde{\bs{x}}:=(1,\bs{x})\in\mb{R}^{d+1}\ \mbox{for }\bs{x}\in\mb{R}^{d}.\end{equation}
Then, we consider the following submanifold of $\mb{R}^{\mc{N}}$.
$$\mathscr{H}:=\{(\bs{x},\tilde{\bs{x}}\tilde{\bs{A}}):\bs{x}\in[0,1]^{d}\}.$$

Huang and Liu \cite[Theorem 1]{HuangLiu:Simultaneous} proved that if $\omega_{m}\left(\tilde{\bs{A}}\right)<\mc{N}(d+1)$, the submanifold $\mathscr{H}$ is of Khintchine type for convergence, whereas if $\omega_{m}(\bs{A})<\mc{N}d$, the submanifold $\mathscr{H}$ is of strong Khintchine type for convergence.
We recall that for all $\omega_{0}\geq \mc{N}-d$ there always exist matrices $\bs{A}\in\mb{R}^{d\times(\mc{N}-d)}$ such that $\omega_{m}(\bs{A})=\omega_{0}$ \cite[Theorem 1]{Bugeaud:Multiplicative}. More precisely, we have
$$\textup{dim}\left\{\bs{A}\in\mb{R}^{d\times(\mc{N}-d)}:\omega_{m}(\bs{A})=\omega_{0}\right\}=d(\mc{N}-d)-1+\frac{2}{1+\omega_{0}/(\mc{N}-d)},$$
where $\textup{dim}$ denotes the Hausdorff dimension. An analogous formula holds for $\tilde{\bs{A}}$. It can also be shown that for all $\varepsilon>0$ the set of matrices $\bs{A}\in\mb{R}^{d\times(\mc{N}-d)}$ such that $\omega(\bs{A})\leq \mc{N}-d+\varepsilon$ has actually full Lebesgue measure. It follows that Huang and Liu's theorem holds for generic matrices $\bs{A}$ and $\tilde{\bs{A}}$.

One could ask if anything can be said about the limit cases, i.e., $\omega(\tilde{\bs{A}})=\mc{N}(d+1)$ and $\omega(\bs{A})=\mc{N}d$ . We show that that, up to a logarithmic factor, these cases yield Khintchine type subspaces.

\begin{definition}
Let $\bs{L}\in\mb{R}^{\mc{M}\times\mc{N}}$ and let $\omega_{0}\in\mb{R}$. We set
\begin{multline}\omega'_{m}(\bs{L},\omega_{0}):= \\
\sup\left\{\gamma\in\mb{R}:\prod_{j=1}^{\mc{M}}\left\|\bs{L}_{j}\bs{q}\right\|\leq|\bs{q}|_{\infty}^{-\omega_{0}}\log(|\bs{q}|_{\infty})^{-\gamma}\ \mbox{has i.m. solutions }\bs{q}\in\mb{Z}^{\mc{N}}\setminus[-1,1]^{\mc{N}}\right\}.\nonumber
\end{multline}
We call $\omega_{m}(\bs{L},\omega_{0})$ the multiplicative logarithmic exponent of the matrix $\bs{L}$ at $\omega_{0}$.
\end{definition}

\begin{cor}
\label{cor:HLrefined}
Let $\bs{A}$ and $\tilde{\bs{A}}$ be as above. Then,
\begin{itemize}
\item[$i)$] if $\omega_{m}\left(\tilde{\bs{A}}\right)=\mc{N}(d+1)$ and $\omega_{m}'\left(\tilde{\bs{A}},\mc{N}(d+1)\right)< 1-2(d+1)$, the submanifold $\mathscr{H}$ is of Khintchine type for convergence;\vspace{2mm}
\item[$ii)$] if $\omega_{m}(\bs{A})=\mc{N}d$ and $\omega_{m}'(\bs{A},\mc{N}d)< 1-2d$, the submanifold $\mathscr{H}$ is of strong Khintchine type for convergence.
\end{itemize}
\end{cor}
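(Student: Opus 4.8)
The plan is to mimic the strategy that Huang and Liu use to deduce their Theorem 1 from their estimate for $S_{\bs L}(\bs Q)$ (their Theorem 6), but feeding in the sharper estimate of Corollary \ref{cor:cor2} instead. The first step is to set up the standard Borel--Cantelli reduction. Fix a non-increasing $\psi$ with $\sum_{q\ge 1}\psi(q)^{\mc N}<\infty$. Pulling the set $\mathscr S_{\mc N}(\psi)$ back to the parameter cube $[0,1]^d$ via the parametrisation $\bs x\mapsto(\bs x,\tilde{\bs x}\tilde{\bs A})$, a point $\bs x$ lies in $\mathscr S_{\mc N}(\psi)\cap\mathscr H$ iff there are infinitely many $q\in\mb N$ with $\|q x_i\|<\psi(q)$ for $i=1,\dots,d$ and $\|q(\tilde{\bs x}\tilde{\bs A})_k\|<\psi(q)$ for the remaining $\mc N-d$ coordinates. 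The coordinates on the fibre part are $q\alpha_{0,k}+\sum_{\ell}q x_\ell A_{\ell k}$, i.e. inhomogeneous linear forms in $\bs x$ whose homogeneous parts are governed by $\bs A$ (and whose constants are governed by $\bs\alpha_0$, so that the full matrix $\tilde{\bs A}$ enters in the strong-Khintchine variant where the constant term is not present / is absorbed). For each dyadic block $q\in[2^{r},2^{r+1})$ one covers the bad set by boxes of sidelength $\psi(q)/q$ in the $\bs x$-directions, constrained so that the $\mc N-d$ affine forms are simultaneously small; the number of admissible $q$ in the block with a nonempty such box is controlled by a fractional-part sum of exactly the type $S_{\tilde{\bs A}}(\bs Q)$ (resp.\ $S_{\bs A}(\bs Q)$) with $\bs Q$ all equal to $2^{r}$. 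Summing the measures of these boxes over the block and then over $r$, convergence of the resulting series forces $\Vol_d=0$ by Borel--Cantelli.

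The second step is the arithmetic input. The hypothesis $\omega_m(\tilde{\bs A})=\mc N(d+1)$ together with $\omega_m'(\tilde{\bs A},\mc N(d+1))<1-2(d+1)$ says precisely that $\prod_j\|\tilde{\bs A}_j(\bs q)\|\ge |\bs q|_\infty^{-\mc N}(\log|\bs q|_\infty)^{-\gamma}$ fails for only finitely many $\bs q$ once $\gamma$ is chosen in the gap, i.e.\ $\tilde{\bs A}$ is $c\,\phi$-semi multiplicatively badly approximable (in the double-index sense $\mc M=d+1$, $\mc N\rightsquigarrow\mc N-d$) with $\phi(x)=(\log x)^{-\gamma}$, after discarding the finitely many exceptional $\bs q$ (which contribute a null set). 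One then applies Corollary \ref{cor:cor2} with this $\phi$: since $1/\phi(Q)=(\log Q)^{\gamma}$ with $\gamma<1-2(d+1)$, and since $\log(Q^{\mc N}/\phi(Q))\asymp\log Q$, the bound becomes
$$S_{\tilde{\bs A}}(2^{r},\dots,2^{r})\ll_{d,\mc N} 2^{r(\mc N-d)}(\log 2^{r})^{\,d+1+\gamma},$$
with exponent $d+1+\gamma<d+1+(1-2(d+1))= -d$, wait — the relevant exponent after combining both terms of Corollary \ref{cor:cor2} is $\max\{\mc M,\mc M-1+\gamma\}$ with $\mc M=d+1$; the condition $\gamma<1-2(d+1)$ is exactly what is needed to beat the competing geometric loss $2^{-r\cdot(\text{something})}$ and the extra $(d+1)$ box-counting factors coming from the $d+1$ small-denominator constraints, so that the sum over $r$ of $2^{-r}\cdot 2^{r(\mc N-d)}/(2^{r(\mc N-d)})\cdot(\log)^{\dots}$ telescopes into a convergent series. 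The strong-Khintchine case $ii)$ is identical with $d+1$ replaced by $d$ and $\tilde{\bs A}$ by $\bs A$, because dropping the requirement that $\psi$ be non-increasing is matched by dropping the inhomogeneous row $\bs\alpha_0$, so one of the $\mc M$ small-denominator constraints disappears and the threshold moves from $1-2(d+1)$ to $1-2d$.

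The main obstacle is the bookkeeping in the second step: one must carefully track how the exponent in the covering series depends on (a) the number of product factors $\|L_i(\bs q)\|$ available, (b) the two separate terms in Corollary \ref{cor:cor2}, and (c) the $\mc N-d$ inhomogeneous constraints that cut down the number of relevant $q$ per dyadic block, and verify that the numerical threshold emerging from this count is exactly $1-2(d+1)$ (resp.\ $1-2d$) rather than something weaker. In particular one has to be attentive that the $1/\phi(Q)$-term in Corollary \ref{cor:cor2}, which Huang and Liu could not remove, is precisely the term whose improved treatment allows the limiting exponent $\omega_m=\mc N(d+1)$ to be reached at all; its exponent $\mc M-1+\gamma$ is the binding constraint. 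A secondary technical point is the reduction from ``infinitely many $\bs q\in\mb Z^{\mc N-d}\setminus[-1,1]^{\mc N-d}$'' in the definition of $\omega_m'$ to the dyadic-block sums over $q\in\mb N$ appearing on the manifold side; this is routine but must be done so that the semi-multiplicative-badly-approximable hypothesis applies with a single constant $c$ after excluding a finite (null) set of exceptions.
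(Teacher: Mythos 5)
Your high-level architecture is the same as the paper's: translate the exponent hypotheses into a $\phi$-semi multiplicative bad approximability statement, feed Corollary \ref{cor:cor2} into Huang--Liu's counting of rational points near $\mathscr{H}$, and finish by Borel--Cantelli. (The paper packages this as Lemmas \ref{thm:HLthm7}--\ref{thm:HLthm8} and Propositions \ref{thm:thm3refined}--\ref{thm:thm2refined}, then specialises to $s=d$ with explicit choices of $\tilde\phi,\phi,\hat\psi$.) However, your execution has a concrete error and leaves the decisive computation undone.

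First, the translation step is wrong. If $\omega_{m}(\tilde{\bs{A}})=\mc{N}(d+1)$ and $\omega_{m}'(\tilde{\bs{A}},\mc{N}(d+1))<\gamma$, then for all but finitely many $\bs{q}$ one gets $\prod_{j}\|\tilde{\bs{A}}_{j}(\bs{q})\|>|\bs{q}|_{\infty}^{-\mc{N}(d+1)}\log(|\bs{q}|_{\infty})^{-\gamma}$, so (with $\mc{N}-d$ columns) the correct function is $\tilde{\phi}(x)\asymp x^{(\mc{N}-d)-\mc{N}(d+1)}\log^{*}(x)^{-\gamma}=x^{-d(\mc{N}+1)}\log^{*}(x)^{-\gamma}$, which decays \emph{polynomially} --- not $(\log x)^{-\gamma}$ as you claim. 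Worse, since $1-2(d+1)<0$, your $\gamma$ is negative and $(\log x)^{-\gamma}$ is unbounded, so it is not even an admissible $\phi:[1,+\infty)\to(0,1]$. With the correct $\tilde\phi$, the second term of Corollary \ref{cor:cor2} is of size $Q^{\mc{N}-d+d(\mc{N}+1)}$ up to logarithms, so your displayed bound $S_{\tilde{\bs{A}}}(2^{r},\dotsc,2^{r})\ll 2^{r(\mc{N}-d)}(\log 2^{r})^{d+1+\gamma}$ is false. This is fatal to the convergence claim as you have set it up, precisely because (as you yourself note) the $1/\phi$ term is the binding one.

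Second, you apply Corollary \ref{cor:cor2} at the wrong scale. In the Huang--Liu argument the reciprocal sum arises from bounding $\mc{A}(q,\delta)$ by a sum over the \emph{dual} variables $\bs{j}\in\mb{Z}^{\mc{N}-d}$ with $|\bs{j}|_{\infty}\leq J\sim 1/\delta=1/\psi(q)$; it is not a count of admissible $q\in[2^{r},2^{r+1})$ with $\bs{Q}=(2^{r},\dotsc,2^{r})$. With the correct range the error term is $\phi(1/\psi(q))^{-1}\log(\cdots)^{d}$ (resp. $d-1$), and the whole point is to compare this with the main term $\psi(q)^{\mc{N}-d}q^{d}$; this comparison, carried out with $\hat{\psi}(x)=x^{-1/\mc{N}}\log^{*}(x)^{(-1-\varepsilon)/\mc{N}}$ (after replacing $\psi$ by $\max\{\psi,\hat\psi\}$, which you omit), is exactly where the thresholds $1-2(d+1)$ and $1-2d$ come from. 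You explicitly defer this "bookkeeping", but it is the entire content of the corollary; as written, the proposal neither derives the thresholds nor could it, given the incorrect $\phi$.
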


\noindent Unfortunately, not much is known about the existence of matrices with prescribed multiplicative logarithmic order. However, their existence is established in the additive setting \cite{Beresnevich:Setsofexactlogorder}. We can therefore say something about the case $d=1$. From \cite[Theorem 1]{Beresnevich:Setsofexactlogorder} we can easily deduce that if $d=1$, there always exist matrices $\bs{A}\in\mb{R}^{1\times\mc{N}}$ such that $\omega_{m}'(\bs{A},\mc{N})=\omega_{1}$ for any given $\omega_{1}\in\mb{R}$ (this implies $\omega_{m}(\bs{A})=\mc{N}$). More precisely, we have
$$\textup{dim}\left\{\bs{A}\in\mb{R}^{1\times(\mc{N}-1)}:\omega_{m}'(\bs{A},\mc{N})=\omega_{1}\right\}=\mc{N}-2+\frac{\mc{N}}{1+\mc{N}},$$
independently of the choice of $\omega_{1}$ (here $\textup{dim}$ denotes the Hausdorff dimension). It follows from Corollary \ref{cor:HLrefined} that there exist strong Khintchine type lines in $\mb{R}^{\mc{N}}$ with exponent $\omega_{m}=\mc{N}$, improving on \cite[Theorem 1]{HuangLiu:Simultaneous}.

Now, \cite[Theorem 1]{HuangLiu:Simultaneous} follows in turn from \cite[Theorems 2 and 3]{HuangLiu:Simultaneous}. These results establish some Khintchine type conditions for the submanifold $\mathscr{H}$ with respect to general $s$-dimensional Hausdorff measures (i.e., $s$ need not coincide with the dimension of the submanifold). The problem of establishing Khintchine type conditions with respect to general Hausdorff measures is widely known as the generalised Baker-Schmidt problem. Huang and Liu prove that such conditions hold for the convergence case, when $\omega(\tilde{\bs{A}})<(\mc{N}+1)d$ or $\omega(\bs{A})<\mc{N}d$. We show that \cite[Theorems 2 and 3]{HuangLiu:Simultaneous} can be refined to include the limit cases.

\begin{prop}
\label{thm:thm3refined}
Let $\bs{A}\in\mb{R}^{d\times(\mc{N}-d)}$ and let $\bs{\alpha}_{0}\in\mb{R}^{\mc{N}-d}$. Let $\tilde{\bs{A}}$ be the matrix defined in (\ref{eq:tilde}), and let $s\in[0,+\infty)$. Assume that $\tilde{\bs{A}}$ is $\tilde{\phi}$-semi multiplicatively badly approximable, where $\tilde{\phi}:[1,+\infty)\to(0,1]$ is a non-increasing function with the following properties:\vspace{2mm}
\begin{itemize}
\item[$i)$] $\tilde{\phi}(\lambda x)\gg_{\lambda}\tilde{\phi}(x)$ for all $\lambda\gg 1$;\vspace{2mm}
\item[$ii)$] $x^{-\gamma}\ll\tilde{\phi}(x)\ll 1/\log(x)$ for some $\gamma>0$;\vspace{2mm}
\item[$iii)$] there exists a non-increasing function $\hat{\psi}:[1,+\infty)\to(0,1]$ such that\vspace{2mm}
 \begin{itemize}
 \item[$iiia)$] $\sum_{q=1}^{+\infty}\hat{\psi}(q)^{\mc{N}-d+s}q^{d-s}<+\infty$;\vspace{2mm}
 \item[$iiib)$] $\tilde{\phi}\left(1/\hat{\psi}(x)\right)\hat{\psi}(x)^{\mc{N}-d}\gg_{\mc{N},d,s}\log(x)^{d}/x^{d+1}$.\vspace{2mm}
 \end{itemize}
\end{itemize}
Then, for all non-increasing approximating functions $\psi:[1,+\infty)\to(0,1]$ such that \\
$\sum_{q=1}^{+\infty}\psi(q)^{\mc{N}-d+s}q^{d-s}<+\infty$, we have $\textup{Vol}_{s}\left(\mathscr{S}_{\mc{N}}(\psi)\cap\mathscr{H}\right)=0$.
\end{prop}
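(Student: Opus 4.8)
The plan is to reduce the Hausdorff-measure statement to a counting estimate for rational points near $\mathscr{H}$, and then feed that counting estimate into a Borel--Cantelli argument. First I would parametrise points of $\mathscr{S}_{\mc{N}}(\psi)\cap\mathscr{H}$: a point $(\bs{x},\tilde{\bs{x}}\tilde{\bs{A}})$ lies in $\mathscr{S}_{\mc{N}}(\psi)$ precisely when there are infinitely many $q\in\mb{N}$ with $\|qx_i\|<\psi(q)$ for $i=1,\dots,d$ and $\|q(\tilde{\bs{x}}\tilde{\bs{A}})_k\|<\psi(q)$ for $k=1,\dots,\mc{N}-d$. Writing $q x_i = p_i + O(\psi(q))$ with $\bs{p}\in\mb{Z}^d$, the second family of conditions becomes $\|q\alpha_{0,k} + \sum_i p_i A_{ik} + (\text{error})\cdot(\text{bounded})\|<\psi(q)+O(|\bs{A}|\psi(q))$, i.e.\ (after absorbing constants into $\psi$) the integer vector $(q,\bs{p})\in\mb{Z}^{d+1}$ must satisfy $\|L_k(q,\bs{p})\|\ll\psi(q)$ where $L_1,\dots,L_{\mc{N}-d}$ are the linear forms given by the rows of $\tilde{\bs{A}}^{\mathsf T}$ acting on $\tilde{\bs{x}}$-type vectors. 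Thus $\mathscr{S}_{\mc{N}}(\psi)\cap\mathscr{H}$ is covered, for each $Q$ dyadic, by boxes of sidelength $\asymp\psi(Q)/Q$ around the points $\bs{p}/q$ (projected to the $\bs{x}$-coordinates) with $q\asymp Q$, $\bs{p}\in\mb{Z}^d$, $q\le|\bs{p}|_\infty\ll q$, and $\prod_k\|L_k(q,\bs{p})\|\ll\psi(Q)^{\mc{N}-d}$.

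Next I would estimate the number of such $(q,\bs{p})$. This is exactly a count of the shape governed by $M(\tilde{\bs A}^{\mathsf T},\varepsilon,T,Q)$ from \S\ref{subsec:weakadm}: take $\mc{M}=\mc{N}-d$ linear forms, $\mc{N}_{\text{here}}=d+1$ variables, $T=1$ (each $\|L_k\|\le 1/2$), and $\varepsilon\asymp\psi(Q)^{\mc{N}-d}$. Using that $\tilde{\bs A}$ is $\tilde\phi$-semi multiplicatively badly approximable, the lattice $\Lambda_{\tilde{\bs A}^{\mathsf T}}$ is weakly admissible with $\nu$-function controlled by $\tilde\phi$, so Proposition \ref{cor:cor1} (with $T^{\mc{M}}/\varepsilon=\psi(Q)^{-(\mc{N}-d)}\ge e^{\mc{M}}$ for $Q$ large, which holds by property $ii)$ since $\psi$ is small) gives
$$\#M \ll_{\mc{N},d}\ \Vol Z + (1+1)^{\mc{N}-1}\log(\psi(Q)^{-(\mc{N}-d)})^{\mc{N}-d-1}\left(\frac{\psi(Q)^{\mc{N}-d}Q^{d+1}}{\tilde\phi(Q')}\right)^{\frac{\mc{N}-1}{\mc{N}}},$$
where $\Vol Z\ll \psi(Q)^{\mc{N}-d}Q^{d+1}\log(1/\psi(Q))^{\mc{N}-d-1}$ and $Q'$ is the relevant argument ($Q'\asymp 1/\psi(Q)$ after rescaling, which is why property $i)$ — near-multiplicativity of $\tilde\phi$ — and the substitution $\hat\psi$ in $iii)$ appear). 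The total $s$-dimensional measure of the cover at scale $Q$ is then $\ll \#M\cdot(\psi(Q)/Q)^s$, and I would substitute the bound above and show that summing over dyadic $Q$ converges. The main term contributes $\sum_Q Q^{d+1}\psi(Q)^{\mc{N}-d}(\psi(Q)/Q)^s\log(\cdots)^{\mc{N}-d-1}$, which — after noting dyadic summation is comparable to $\sum_q \psi(q)^{\mc{N}-d+s}q^{d-s}$ up to the log power and a factor $q$ — is finite by hypothesis on $\psi$ (the extra log powers are harmless because one may shrink the exponent slightly, or, more carefully, absorb them since $\psi(q)^{\mc{N}-d+s}q^{d-s}$ summable with a genuine power-saving margin). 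The error term contributes $\sum_Q (\psi(Q)^{\mc{N}-d}Q^{d+1}/\tilde\phi(1/\psi(Q)))^{(\mc{N}-1)/\mc{N}}(\psi(Q)/Q)^s\log(\cdots)$, and this is precisely where condition $iiib)$ is engineered: the lower bound $\tilde\phi(1/\hat\psi(x))\hat\psi(x)^{\mc{N}-d}\gg\log(x)^d/x^{d+1}$ forces $\psi(Q)^{\mc{N}-d}Q^{d+1}/\tilde\phi(1/\psi(Q))\ll$ something summable against $(\psi(Q)/Q)^s$ after raising to the power $(\mc{N}-1)/\mc{N}<1$.

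The convergence Borel--Cantelli lemma for Hausdorff measures then gives $\Vol_s(\limsup_Q(\text{cover}))=0$, and since $\mathscr{S}_{\mc{N}}(\psi)\cap\mathscr{H}\subseteq\limsup$, we conclude $\Vol_s(\mathscr{S}_{\mc{N}}(\psi)\cap\mathscr{H})=0$. The main obstacle I anticipate is bookkeeping in the error-term summation: one must track how the rescaling of the box (from $q\asymp Q$, sidelength $\psi(Q)/Q$, to a normalised lattice problem with parameter $\asymp 1/\psi(Q)$) interacts with the arguments of $\tilde\phi$ and $\hat\psi$, and verify that conditions $i)$--$iii)$ — which are stated somewhat opaquely — are exactly what is needed to close both the main-term and error-term sums with room to spare for the logarithmic factors. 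A secondary technical point is handling the linearisation error carefully enough that the implied constants depend only on $\bs{A}$, $\mc{N}$, $d$, $s$ (not on $\psi$), which requires the boundedness of $[0,1]^d$ and uniform control of $|\tilde{\bs A}|$. Everything else — the passage from $\psi$-approximable points on $\mathscr{H}$ to integer solutions of the $\|L_k\|$ inequalities, and the dyadic decomposition — is routine.
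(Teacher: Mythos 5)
Your overall architecture (count rational points near $\mathscr{H}$, then run a Hausdorff--measure covering/Borel--Cantelli argument) matches the paper's, which proves the refined Huang--Liu counting lemma (Lemma \ref{thm:HLthm8}) and then concludes as in Huang--Liu. But the way you propose to obtain the counting estimate has two genuine gaps. First, a duality mismatch: the integer points you need to count are $(q,\bs{a})$ with $\|\tilde{\bs{a}}\tilde{\bs{A}}\|<\delta$, i.e.\ solutions to $\mc{N}-d$ linear forms in $d+1$ variables whose coefficient matrix is $\tilde{\bs{A}}^{\mathsf{T}}$. To apply Proposition \ref{cor:cor1} (equivalently, to get weak admissibility of $\Lambda_{\tilde{\bs{A}}^{\mathsf{T}}}$ via Lemma \ref{lem:weakadmcor}) you would need $\tilde{\bs{A}}^{\mathsf{T}}$ to be $\tilde\phi$-semi multiplicatively badly approximable, whereas the hypothesis is stated for $\tilde{\bs{A}}$; these conditions concern transposed systems and neither implies the other. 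The paper avoids this by \emph{not} counting directly: it uses Huang--Liu's Erd\H{o}s--Tur\'an-type inequality, which bounds $\mc{A}(q,\delta)$ (and $\mc{N}(Q,\delta)$) by $(2\delta+\tfrac{1}{J+1})^{\mc{N}-d}$ times a dual sum $\sum_{\bs{j}}\prod_{u}\|\tilde{A}_{u}(\bs{j})\|^{-1}$ over the \emph{rows} of $\tilde{\bs{A}}$ in $\mc{N}-d$ dual variables, which is exactly the object Corollary \ref{cor:cor2} controls under the stated hypothesis on $\tilde{\bs{A}}$.

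Second, your choice $T=1$, $\varepsilon\asymp\psi(Q)^{\mc{N}-d}$ replaces the max condition $\max_{k}\|L_{k}\|<\delta$ by the product condition, which inflates the main term to $\asymp Q^{d+1}\psi(Q)^{\mc{N}-d}\log(1/\psi(Q))^{\mc{N}-d-1}$. Your justification that the extra logarithm is ``harmless'' because the series $\sum_{q}\psi(q)^{\mc{N}-d+s}q^{d-s}$ converges ``with a genuine power-saving margin'' is false: mere convergence gives no margin. For instance, if $\psi(q)^{\mc{N}-d+s}q^{d-s}\asymp q^{-1}(\log q)^{-3/2}$ the hypothesis holds but multiplying by $(\log q)^{\mc{N}-d-1}$ destroys convergence whenever $\mc{N}-d\geq 3$ (and already the $(\psi(Q)/Q)^{s}$-weighted version fails similarly). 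This is precisely why Lemma \ref{thm:HLthm8} is engineered to produce the clean main term $2^{\mc{N}-d}\delta^{\mc{N}-d}\sum_{q\leq Q}q^{d}$ with only an $\varepsilon'\delta^{\mc{N}-d}Q^{d+1}$ loss, pushing all logarithms into the error term where conditions $ii)$ and $iiib)$ (after the reduction to $\psi\geq\hat\psi$ via $iiia)$, a step you also omit) absorb them. Your approach could perhaps be repaired by taking $T\asymp\delta$ and $\varepsilon\asymp\delta^{\mc{N}-d}$ to kill the logarithm in $\Vol Z$, but the transpose obstruction in the first paragraph would remain.
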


Note that when $\hat{\psi}(x)$ is of the form $\hat{\psi}(x)=x^{-\gamma'}$, with $\gamma'>0$, condition $iii)$ implies $\omega_{m}\left(\tilde{\bs{A}}\right)\leq(d+1)(d-s+1)/(\mc{N}-d+s)$, i.e., the hypothesis in Huang and Liu's theorem along with the limit case.

\begin{prop}
\label{thm:thm2refined}
Let $s\in[0,+\infty)$ and let $\bs{A}\in\mb{R}^{d\times(\mc{N}-d)}$ be a $\phi$-semi multiplicatively badly approximable matrix, where $\phi:[1,+\infty)\to(0,1]$ is a non-increasing function with the following properties:\vspace{2mm}
\begin{itemize}
\item[$i)$] $\phi(\lambda x)\gg_{\lambda}\phi(x)$ for all $\lambda\gg 1$;\vspace{2mm}
\item[$ii)$] $x^{-\gamma}\ll\phi(x)\ll1/\log(x)$ for some $\gamma>0$;\vspace{2mm}
\item[$iii)$] there exists a function $\hat{\psi}:[1,+\infty)\to(0,1]$ such that\vspace{2mm}
 \begin{itemize}
 \item[$iiia)$] $\sum_{q=1}^{+\infty}\hat{\psi}(q)^{\mc{N}-d+s}q^{d-s}<+\infty$;\vspace{2mm}
 \item[$iiib)$] $\phi\left(1/\hat{\psi}(x)\right)\hat{\psi}(x)^{\mc{N}-d}\gg_{\mc{N},d,s}\log(x)^{d-1}/x^{d}$.\vspace{2mm}
 \end{itemize}
\end{itemize}
Then, for all approximating functions $\psi:[1,+\infty)\to(0,1]$ such that $\sum_{q=1}^{+\infty}\psi(q)^{\mc{N}-d+s}q^{d-s}<+\infty$, we have $\textup{Vol}_{s}\left(\mathscr{S}_{\mc{N}}(\psi)\cap\mathscr{H}\right)=0$.
\end{prop}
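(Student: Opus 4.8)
The strategy is the classical Borel–Cantelli approach to convergence Khintchine-type results, with the key point being that the counting input comes from Corollary \ref{cor:cor2} (or directly from Theorem \ref{thm:mainestimate}) rather than from a crude gap principle. First I would reduce the problem to a sum over the parameter $\bs{x}$ living on the $d$-dimensional base cube $[0,1]^d$ of the parametrisation of $\mathscr{H}$. A point $(\bs{x},\tilde{\bs{x}}\bs{A})$ of $\mathscr{H}$ lies in $\mathscr{S}_{\mc{N}}(\psi)$ precisely when there are infinitely many $q\in\mb{N}$ with $\|qx_i\|<\psi(q)$ for $i=1,\dots,d$ and $\|q\,\tilde{\bs{x}}\bs{A}_k\|<\psi(q)$ for the remaining $\mc{N}-d$ coordinates; since $\bs{A}$ is $\phi$-semi multiplicatively badly approximable, the constraint from the $\bs{A}$-block forces, for each fixed $q$, the vector $(qx_1,\dots,qx_d)$ to lie within $O(\psi(q))$ of an integer point in the roughly $\mc{N}-d$ directions, but only on a set whose $d$-dimensional measure is controlled by how many $q$ in a dyadic block $[2^{\ell},2^{\ell+1})$ can simultaneously satisfy the $\bs{A}$-constraint. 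This is exactly where the hybrid badly-approximable hypothesis enters: it bounds the number of admissible $q$ in a dyadic block and hence the measure of the corresponding lim sup set.

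Concretely, for each $q$ I would cover the relevant portion of $[0,1]^d$ by boxes of side $\psi(q)/q$ centred at the rational points $\bs{p}/q$, so that each box contributes measure $(\psi(q)/q)^d$ toward the $s$-dimensional Hausdorff sum it contributes $(\psi(q)/q)^s$ after the natural $s$-dimensional Hausdorff-measure bookkeeping (covering each such box, which has diameter $\asymp \psi(q)/q$, by itself). The number of $\bs{p}$ for which the box is non-empty after imposing the $\bs{A}$-block constraint is, by the semi-multiplicative bad approximability of $\bs{A}$ together with property $i)$ (quasi-monotonicity of $\phi$), bounded by $\asymp q^d \cdot \big(q^{\mc{N}-d}/\phi(q)\big)^{-1}\cdot(\text{log powers})$, i.e. roughly $q^{2d-\mc{N}}\phi(q)\,(\log q)^{d-1}$ up to constants, using that the product of the $\mc{N}-d$ nearest-integer distances $\prod \|q\tilde{\bs{x}}\bs{A}_k\|$ is forced to exceed $\phi(q)/q^{\mc{N}-d}$. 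Hence the total $s$-dimensional measure contributed by level $q$ is
\begin{equation}
\ll_{\mc{N},d,s} (\log q)^{d-1}\,q^{2d-\mc{N}}\phi(q)\cdot\left(\frac{\psi(q)}{q}\right)^{s}\cdot q^{d-s}\cdot(\text{error terms from the counting}).\nonumber
\end{equation}
I would then sum over $q$ and dyadic blocks, invoke hypothesis $iii)$ — in particular $iiib)$, which with the choice $\hat\psi$ at the scale $x = 1/\psi(q)$ is designed precisely so that $\phi(1/\psi(q))\psi(q)^{\mc{N}-d}$ dominates $(\log q)^{d-1}/q^d$ — to replace the bracketed $\phi$-factor and collapse the sum to a constant multiple of $\sum_q \psi(q)^{\mc{N}-d+s}q^{d-s}(\log q)^{O(1)}$, which converges by $iiia)$ (absorbing the log powers using $ii)$, i.e. $\phi(x)\ll 1/\log x$ gives enough room). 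A Borel–Cantelli argument then yields $\Vol_s(\mathscr{S}_{\mc{N}}(\psi)\cap\mathscr{H})=0$.

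The main obstacle — and the reason Theorem \ref{thm:mainestimate} / Corollary \ref{cor:cor2} is needed rather than an elementary gap principle — is obtaining a sufficiently sharp count of the number of integer vectors $(\bs{q},\bs{p})$ in a dyadic range satisfying the multiplicative constraint coming from the $\bs{A}$-block, \emph{without} an extra $1/\phi$ loss: a naive gap-principle bound (as in \cite{HuangLiu:Simultaneous}) produces precisely the spurious $1/\phi(Q)$ factor that would break the convergence of the final series under the limiting hypothesis $ii)$. The counting result controls the error term in the lattice-point count for the hyperbolic-spike region $Z_{\bs{\eta}}$ well enough that, after choosing the parameters $\varepsilon,T,\bs{Q}$ in terms of $\psi(q)$ and the dyadic scale, the error term is of strictly smaller order than the main term $\Vol\,Z$ uniformly in the range, which is exactly what makes the borderline case $\phi(x)\asymp 1/\log x$ accessible. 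A secondary technical point is checking definability in $\mb{R}_{\exp}$ of the relevant family (the region cut out by the $\psi$- and $\phi$-type inequalities with the logarithmic weights), so that Theorem \ref{thm:mainestimate} applies; this is routine since all the functions involved are built from powers and logarithms, but it must be stated. Finally one has to verify the quasi-monotonicity $i)$ is genuinely used to pass between the value of $\phi$ at $q$, at a dyadic endpoint, and at $1/\hat\psi(q)$ — a standard but necessary bookkeeping step.
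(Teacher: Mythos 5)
Your overall architecture (cover $\mathscr{S}_{\mc{N}}(\psi)\cap\mathscr{H}$ at level $q$ by boxes of diameter $\asymp\psi(q)/q$ about admissible rational centres, count the centres via the new counting machinery, sum $(\text{count})\cdot(\psi(q)/q)^{s}$ and apply Hausdorff--Cantelli) is the same as the paper's, which runs through a refined version of Huang--Liu's Theorem 7: the count $\mc{A}(q,\delta)$ of admissible centres is bounded by a large-sieve-type inequality in terms of $\sum_{0<|\bs{j}|_\infty\le J}\prod_u\|A_u(\bs{j})\|^{-1}$, and that sum is estimated by Corollary \ref{cor:cor2}. However, the quantitative core of your argument has genuine gaps. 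First, your bound on the number of admissible centres, ``$\asymp q^{d}\cdot(q^{\mc{N}-d}/\phi(q))^{-1}\cdot(\log\text{ powers})$'', is wrong both in form and in derivation: a \emph{lower} bound $\prod_k\|q\,\tilde{\bs{x}}\bs{A}_k\|\geq\phi(q)/q^{\mc{N}-d}$ does not translate into an upper bound on the number of solutions by dividing; the correct statement is $\mc{A}(q,C\psi(q))\ll\psi(q)^{\mc{N}-d}q^{d}$, where $\psi(q)^{\mc{N}-d}q^{d}$ is the \emph{main} (volume) term and the role of $\phi$ is solely to control the error term of the lattice-point count (via $\nu(\Lambda,r)$). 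With your expression the level-$q$ contribution does not collapse to $\psi(q)^{\mc{N}-d+s}q^{d-s}$.

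Second, your closing step ``converges by $iiia)$ (absorbing the log powers using $ii)$)'' does not work: a convergent series multiplied by $(\log q)^{O(1)}$ need not converge, and $ii)$ provides no such absorption. The actual mechanism is different: hypothesis $ii)$ is used to show the $\log(\cdot)^{d}$ error term is dominated by the $\phi^{-1}\log(\cdot)^{d-1}$ error term, and hypothesis $iiib)$ --- applied with $\psi$ in place of $\hat\psi$ --- is then used to show that $\phi(1/\psi(q))^{-1}\log(\cdot)^{d-1}\ll\psi(q)^{\mc{N}-d}q^{d}$, i.e.\ the error is swallowed by the main term with \emph{no} residual logarithms. To legitimately substitute $\psi$ for $\hat\psi$ in $iiib)$ you must first reduce to the case $\psi\geq\hat\psi$ by replacing $\psi$ with $\max\{\hat\psi,\psi\}$ (permissible because this enlarges $\mathscr{S}_{\mc{N}}(\psi)$ and preserves the convergence hypothesis thanks to $iiia)$); this reduction is absent from your plan and is not optional. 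Your concluding remarks about definability are not needed at this stage, since all definability issues are already settled inside Corollary \ref{cor:cor2}.
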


\noindent With Propositions \ref{thm:thm3refined} and \ref{thm:thm2refined} at hand, the proof of Corollary \ref{cor:HLrefined} is straightforward. We sketch it below.

\begin{proof}
Let $\log^{*}(x):=\max\{1,\log(x)\}$ for all $x\in(0,+\infty)$. The proof follows from taking $\tilde{\phi}(x)=\tilde{c}x^{\mc{N}-d-\omega_{m}\left(\tilde{\bs{A}}\right)}\log^{*}(x)^{-\omega_{m}'\left(\tilde{\bs{A}}\right)-\varepsilon}$ ($\tilde{c}>0$) and $\phi(x)=cx^{\mc{N}-d-\omega_{m}(\bs{A})}\log^{*}(x)^{-\omega_{m}'(\bs{A})-\varepsilon}$ ($c>0$), and applying the case $s=d$ of Propositions \ref{thm:thm2refined} and \ref{thm:thm3refined}, with $\hat{\psi}(x):=x^{-1/\mc{N}}\log^{*}(x)^{(-1-\varepsilon)/\mc{N}}$.
\end{proof}

Note that we intentionally chose not to specify the function $\hat{\psi}$ in Propositions \ref{thm:thm3refined} and \ref{thm:thm2refined}, since these results could be used to derive even finer Diophantine conditions on subspaces, involving, e.g, iterated logarithms.

\section{Proof of Theorem \ref{thm:mainestimate}}

From now on, we fix an $o$-minimal structure $\mathfrak{S}$ extending $\mb{R}_{\exp}$, and we say that a set $S\subset\mb{R}^{n}$ is definable if it is definable in $\mathfrak{S}$.
We fix the parameters $\bs{\tau}$ and $\bs{\eta}(\bs{\tau})=(\varepsilon,T,\bs{Q})\in(0,+\infty)^{2+N}$ such that $W_{\bs{\tau}}\subset Z_{\bs{\eta}(\bs{\tau})}$. For simplicity, we set $W:=W_{\bs{\tau}}$ and $Z:=Z_{\bs{\eta}(\bs{\tau})}$. We also write $H$ for $H_{\varepsilon,T}$ and $R$ for $R_{\bs{Q}}$.

To prove our estimate, we partition the set $Z$ and we consider the induced partition on $W$. We then count the lattice points contained in each subset of this partition. Let
$$H_{+}:=H\cap\left\{\underline{\bs{x}}\in V_{\bs{m}}:\bs{x}_{i}\neq \bs{0}\ i=1,\dotsc,M\right\}$$
and let $Z_{+}:=H_{+}\times R$. Let also
$$H^{i}:=H\cap\{\bs{x}_{i}=\bs{0}\}$$
and $Z^{i}:=H^{i}\times R$ for $i=1,\dotsc,M$. We set $W_{+}:=W\cap Z_{+}$ and $W^{i}:=W\cap Z^{i}$ for $i=1,\dotsc,M$. Then, we have
\begin{equation}
W=W_{+}\cup\bigcup_{i=1}^{M}W^{i}.\nonumber
\end{equation}
Hence,
\begin{equation}
\left|\#(\Lambda\cap W)-\frac{\Vol\,W}{\det\Lambda}\right|\leq\left|\#(\Lambda\cap W_{+})-\frac{\Vol\,W_{+}}{\det\Lambda}\right|+\sum_{i=1}^{M}\#\left(\Lambda\cap W^{i}\right),\nonumber
\end{equation}

To decompose the sets $H_{+}$ and $W_{+}$ we use the following crucial decomposition result.

\begin{prop}
\label{prop:partition}
Assume that $T/\varepsilon>e$ (where $e=2.71828\dots$ is the base of the natural logarithm). Then, there exists a partition of the set $H_{+}$ of the form $H_{+}=\bigcup_{k\in\mc{K}}\! X_{k}$, and there exists a collection of linear maps $\left\{\varphi_{k}\right\}_{k\in{\mc{K}}}$, defined on the space $V_{\bs{m}}$, such that\vspace{2mm}
\begin{itemize}
\item[$i)$] $\#\mc{K}\ll_{\bs{m},\bs{\beta}}\log\left(T/\varepsilon\right)^{M-1}$;\vspace{2mm}
\item[$ii)$] each of the sets $X_{k}$ for $k\in\mc{K}$ is definable;\vspace{2mm}
\item[$iii)$] the maps $\varphi_{k}$ for $k\in\mc{K}$ are defined by $\varphi_{k}(\underline{\bs{x}})_{i}=\exp\left(a^{k}_{i}-c\right)\bs{x}_{i}$ for $i=1,\dotsc,M$, where $c\in\mb{R}$ is a constant only depending on $\bs{m}$ and the coefficients $a^{k}_{i}\in\mb{R}$ satisfy\vspace{2mm}
\begin{itemize}
\item[$iiia)$] $\exp\left(a^{k}_{i}-c\right)\gg_{\bs{m},\bs{\beta}}\varepsilon/T$ for $i=1,\dotsc,M$;\vspace{2mm}
\item[$iiib)$] $\sum_{i=1}^{M}\beta_{i}a^{k}_{i}=0$;\vspace{2mm}
\end{itemize}
\vspace{2mm}
\item[$iv)$] $\varphi_{k}\left(X_{k}\right)\subset\left\{|\bs{x}_{i}|_{2}\leq\varepsilon\ i=1,\dotsc,M\right\}$ for $k\in\mc{K}$.\vspace{2mm}
\end{itemize}
\end{prop}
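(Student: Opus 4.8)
The plan is to build the partition of $H_{+}$ explicitly from a dyadic-type decomposition of the coordinates $|\bs{x}_i|_2$, and then to define the linear maps $\varphi_k$ so that they rescale each block to have all coordinates of size $\ll\varepsilon$ while preserving the multiplicative norm $\Nm_{\bs\beta}$. First I would recall that on $H_{+}$ we have $\Nm_{\bs\beta}(\underline{\bs x})^{1/\mc B}<\varepsilon$ and $\varepsilon\le|\bs x_i|_2\le T$ for each $i$ (the lower bound $|\bs x_i|_2\ge$ something comparable to $\varepsilon$ follows from the product constraint together with the upper bounds $|\bs x_j|_2\le T$: indeed $|\bs x_i|_2^{\beta_i}\ge \Nm_{\bs\beta}(\underline{\bs x})\big/\prod_{j\neq i}T^{\beta_j}$, and $\Nm_{\bs\beta}(\underline{\bs x})$ can only be small, not large — wait, it is small, so this bound is the wrong direction; instead I use that on $Z$ we do not have a lower bound, which is exactly why we restrict to $H_{+}$ where $\bs x_i\neq\bs 0$, and on each piece $X_k$ of the partition we will have a quantitative lower bound $|\bs x_i|_2\gg(\varepsilon/T)^{?}T$ coming from the construction). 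Concretely, set $u_i:=\log(|\bs x_i|_2/T)\in[\log(\varepsilon/T),0]$, so $\sum_i\beta_i u_i<\mc B\log(\varepsilon/T)+\text{(bounded)}$ on $H_+$; I would tile the simplex-like region $\{(u_1,\dots,u_M): u_i\le 0,\ \sum\beta_i u_i\ge \mc B\log(\varepsilon/T)\}$ by unit cubes (in the $u$-coordinates), giving $\ll_{\bs m,\bs\beta}\log(T/\varepsilon)^{M-1}$ pieces — this is $i)$, since the region has $(M-1)$-dimensional ``thickness'' $\asymp\log(T/\varepsilon)$ and bounded extent in the remaining direction. Each piece $X_k$ is cut out by finitely many inequalities among $\log$, $\exp$ and linear functions, hence is definable in $\mb R_{\exp}$, giving $ii)$.

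Next, for the cube indexed by $k$, let $(b^k_1,\dots,b^k_M)$ be (say) its lower corner in the $u$-coordinates, so that on $X_k$ one has $|\bs x_i|_2\asymp T e^{b^k_i}$. I would then define $a^k_i:=-b^k_i-(1/\mc B)\log(T^{\mc B}/\varepsilon^{?})\cdot\beta_i\cdot(\dots)$ — more carefully: I want $\varphi_k(\underline{\bs x})_i=e^{a^k_i-c}\bs x_i$ to satisfy $|e^{a^k_i-c}\bs x_i|_2\ll\varepsilon$ on $X_k$, i.e. $e^{a^k_i-c}\ll\varepsilon/(Te^{b^k_i})$, and simultaneously $\sum_i\beta_i a^k_i=0$ (this is $iiib)$, and it is exactly what makes $\det\varphi_k=1$, or at least $\Nm_{\bs\beta}$-preserving, which is why it is imposed). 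The affine constraint $\sum\beta_i a^k_i=0$ together with the $M$ target conditions is one equation too many, but the product constraint $\Nm_{\bs\beta}(\underline{\bs x})^{1/\mc B}<\varepsilon$ on $H_+$ forces $\sum_i\beta_i b^k_i\le\mc B\log(\varepsilon/T)+O(1)$, which is precisely the slack needed: setting $a^k_i:=-b^k_i+\log(\varepsilon/T)\cdot(\text{correction})$ and then subtracting the mean $\frac{1}{\mc B}\sum_j\beta_j a^k_j$ from each $a^k_i$ to enforce $iiib)$, the correction shifts all $a^k_i$ by a bounded amount, so $iv)$ still holds after absorbing constants into $c$. The lower bound $iiia)$, $e^{a^k_i-c}\gg_{\bs m,\bs\beta}\varepsilon/T$, then follows because $a^k_i=-b^k_i+O_{\bs m,\bs\beta}(1)$ and $b^k_i\le 0$ gives $e^{a^k_i-c}\ge e^{-c+O(1)}\ge$ a constant, which is $\ge\varepsilon/T$ since $\varepsilon/T<1/e<1$; and in the other extreme $b^k_i$ is as small as $\log(\varepsilon/T)$ only when compensated by the other coordinates through $iiib)$, so each $e^{a^k_i-c}$ stays $\gg\varepsilon/T$.

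The main obstacle I expect is bookkeeping the constant $c$ and the $O(1)$ corrections so that all four conclusions hold simultaneously with the same $c$ depending only on $\bs m$: the tension is between $iv)$ (wants $e^{a^k_i-c}$ small, pushing $c$ large), $iiia)$ (wants $e^{a^k_i-c}$ not too small, pushing $c$ small), and $iiib)$ (a rigid linear constraint that couples the $a^k_i$ across $i$). The resolution is that the unit-cube tiling gives $b^k_i$ within $O(1)$ of $\log(|\bs x_i|_2/T)$ uniformly, so all slacks are $O_{\bs m,\bs\beta}(1)$ and a single large-enough $c=c(\bs m)$ works; one must also check that finitely many cubes near the boundary of the tiled region, which are only partially inside $H_+$, are still handled by intersecting $X_k$ with $H_+$ (preserving definability) without breaking the count $i)$. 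A secondary point is verifying that the pieces $X_k$ genuinely cover $H_+$ and are disjoint, which is immediate from the unit-cube tiling of $u$-space pulled back by the definable map $\underline{\bs x}\mapsto(\log|\bs x_1|_2,\dots,\log|\bs x_M|_2)$.
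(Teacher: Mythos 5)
There is a genuine gap, and it sits exactly at the point where you hesitate. In the coordinates $u_i=\log(|\bs{x}_i|_2/T)$ the image of $H_+$ is the \emph{unbounded} region $\{u_i\le 0,\ \sum_i\beta_iu_i<\mc{B}\log(\varepsilon/T)\}$ lying \emph{below} the hyperplane $\sum_i\beta_iu_i=\mc{B}\log(\varepsilon/T)$ (your displayed region with ``$\ge$'' is its complement and, except for the hyperplane itself, is disjoint from the image of $H_+$). There is no lower bound on $|\bs{x}_i|_2$ anywhere on $H_+$ --- the ``spikes'' reach all the way to the coordinate hyperplanes --- so a unit-cube tiling of the relevant region in $u$-space requires infinitely many cubes, and the ``quantitative lower bound $|\bs{x}_i|_2\gg(\varepsilon/T)^{?}T$ coming from the construction'' that you invoke never materialises: it is not a consequence of any tiling, it is precisely what a tiling cannot give you. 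Even setting the spikes aside, a full-dimensional unit-cube tiling of the bounded simplex between the hyperplane and the origin has $\asymp\log(T/\varepsilon)^{M}$ cubes, not $\log(T/\varepsilon)^{M-1}$; your justification of $i)$ (``$(M-1)$-dimensional thickness $\asymp\log(T/\varepsilon)$ and bounded extent in the remaining direction'') describes a unit neighbourhood of the hyperplane, not the set you wrote down. Relatedly, your recipe for $iiib)$ (subtract the weighted mean $\frac{1}{\mc{B}}\sum_j\beta_ja^k_j$) only shifts the $a^k_i$ by $O(1)$ when the cube sits within $O(1)$ of the hyperplane; for cubes deep in a spike the subtracted quantity is unbounded and $iv)$ breaks.

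The missing idea, which is the heart of the paper's argument, is a two-step reduction. First, tile only the $(M-1)$-dimensional ``hyperbolic'' piece of the boundary, i.e.\ the simplex $\pi\cap S$ where $\pi=\{\sum_i\beta_iz_i=\mc{B}\log\varepsilon\}$ and $S=(-\infty,\log T]^M$, by unit tiles \emph{of the hyperplane}; this gives the correct count $\ll\log(T/\varepsilon)^{M-1}$, and the translation from each tile's centre to $(\log\varepsilon,\dots,\log\varepsilon)\in\pi$ lies in $\mathrm{lin}(\pi)$, which yields $\sum_i\beta_ia^k_i=0$ for free. Second, every other point of $H_+$ is \emph{assigned} to one of these tiles via maps that only increase coordinates: the rest of the boundary (where the product is $<\varepsilon^{\mc{B}}$ but some $|\bs{x}_i|_2=T$) is pushed onto $\pi\cap S$ by greedily increasing coordinates one at a time, and the interior of $H_+$ is swept out by radial cones over the boundary pieces. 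Since each point of $H_+$ is thereby dominated coordinatewise by a point of its assigned tile, and conclusion $iv)$ only demands the upper bounds $|\varphi_k(\underline{\bs{x}})_i|_2\le\varepsilon$, the same maps $\varphi_k$ work for the whole fibre of the assignment without any lower bound on the $|\bs{x}_i|_2$. Your construction of the $\varphi_k$ on (a neighbourhood of) the hyperplane is essentially the paper's, but without this domination/cone step the partition neither covers $H_+$ nor respects the bound in $i)$.
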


We prove Proposition \ref{prop:partition} in Section \ref{sec:partitionproof}. The following corollary is an immediate consequence of Proposition \ref{prop:partition}.

\begin{cor}
\label{cor:partition}
Let $\hat{X}_{k}:=X_{k}\times R\subset V$ and let $\hat{\varphi}_{k}:=(\varphi_{k},\textup{id}):V\to V$ for all $k\in\mc{K}$. Then,\vspace{2mm}
\begin{itemize}
\item[$i)$] $Z_{+}=\bigcup_{k\in\mc{K}}\hat{X}_{k}$ is a partition of the set $Z_{+}$;\vspace{2mm}
\item[$ii)$] each of the sets $\hat{X}_{k}$ for $k\in\mc{K}$ is definable;\vspace{2mm}
\item[$iii)$] $\hat{\varphi}_{k}(\hat{X}_{k})\subset\left\{|\bs{x}_{i}|_{2}\leq\varepsilon\ i=1,\dotsc,M\right\}\times R$ for $k\in\mc{K}$.\vspace{2mm}
\end{itemize}
\end{cor}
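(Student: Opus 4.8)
The plan is to obtain each clause of the corollary by taking the Cartesian product with the fixed fibre $R=R_{\bs{Q}}$ in the corresponding clause of Proposition \ref{prop:partition}. Recall that $Z_{+}=H_{+}\times R$ and $\hat{\varphi}_{k}=(\varphi_{k},\textup{id})$ acts on $V=V_{\bs{m}}\times V_{\bs{n}}$ as a product map, so that $\hat{\varphi}_{k}(\hat{X}_{k})=\hat{\varphi}_{k}(X_{k}\times R)=\varphi_{k}(X_{k})\times R$.

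For $i)$, since $H_{+}=\bigcup_{k\in\mc{K}}X_{k}$ is a partition by Proposition \ref{prop:partition}, applying $(\,\cdot\,)\times R$ gives $Z_{+}=H_{+}\times R=\bigcup_{k\in\mc{K}}(X_{k}\times R)=\bigcup_{k\in\mc{K}}\hat{X}_{k}$; disjointness is preserved because $(X_{k}\times R)\cap(X_{k'}\times R)=(X_{k}\cap X_{k'})\times R=\emptyset$ whenever $k\neq k'$. For $ii)$, the set $R=\{\underline{\bs{y}}\in V_{\bs{n}}:|\bs{y}_{j}|_{2}\leq Q_{j}\ j=1,\dotsc,N\}$ is cut out by finitely many polynomial inequalities, hence is semi-algebraic and therefore definable by Definition \ref{def:definablestruc}$ii)$; combining this with the definability of $X_{k}$ (Proposition \ref{prop:partition}$ii)$) and the product axiom Definition \ref{def:definablestruc}$iii)$ shows $\hat{X}_{k}=X_{k}\times R$ is definable. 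For $iii)$, Proposition \ref{prop:partition}$iv)$ gives $\varphi_{k}(X_{k})\subset\{|\bs{x}_{i}|_{2}\leq\varepsilon\ i=1,\dotsc,M\}$, so $\hat{\varphi}_{k}(\hat{X}_{k})=\varphi_{k}(X_{k})\times R\subset\{|\bs{x}_{i}|_{2}\leq\varepsilon\ i=1,\dotsc,M\}\times R$, as claimed.

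There is no real obstacle here: the only point requiring a word of justification is that $R$ is semi-algebraic (a product of Euclidean balls), which places it in the structure via Definition \ref{def:definablestruc}$ii)$; everything else is the formal compatibility of partitions and product maps with taking a Cartesian product with a fixed set.
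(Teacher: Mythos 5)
Your proof is correct and takes the same route the paper intends: the paper simply declares the corollary an immediate consequence of Proposition \ref{prop:partition}, and your write-up is exactly the routine verification (product with the fixed definable set $R$, using the product and semi-algebraic axioms of Definition \ref{def:definablestruc}) that justifies that claim.
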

Corollary \ref{cor:partition} yields the following partition of the set $W_{+}$.
$$W_{+}=\bigcup_{k\in\mc{K}}W\cap\hat{X}_{k}.$$
Hence, we can write
\begin{align}
\label{eq:estimate1} 
 & \left|\#\left(\Lambda\cap W\right)-\frac{\Vol\,W}{\det\Lambda}\right|\leq\left|\#(\Lambda\cap W_{+})-\frac{\Vol\,W_{+}}{\det\Lambda}\right|+\sum_{i=1}^{M}\#(\Lambda\cap W^{i})\nonumber\\
 & \leq\sum_{k\in \mc{K}}\left|\#\left(\Lambda\cap W\cap\hat{X}_{k}\right)-\frac{\Vol\,(W\cap\hat{X}_{k})}{\det\Lambda}\right|+\sum_{i=1}^{M}\#(\Lambda\cap W^{i})\nonumber \\
 & =\sum_{k\in\mc{K}}\left|\#\left(\hat{\varphi}_{k}(\Lambda)\cap \hat{\varphi}_{k}\left(W\cap\hat{X}_{k}\right)\right)-\frac{\Vol\,\hat{\varphi}_{k}\left(W\cap\hat{X}_{k}\right)}{\det\hat{\varphi}_{k}(\Lambda)}\right|+\sum_{i=1}^{M}\#(\Lambda\cap W^{i}).
\end{align}

\begin{lem}
\label{lem:LambdacapC}
Let $c:=\dim C$. Then, for $i=1,\dotsc,M$ we have
\begin{equation}
\#\left(\Lambda\cap W^{i}\right)\ll_{\bs{m},\bs{n}}1+\left(\frac{\diam(W\cap C)}{\lambda_{1}(\Lambda\cap C)}\right)^{c}.\nonumber
\end{equation}
\end{lem}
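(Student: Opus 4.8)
The plan is to show that $W^i = W \cap Z^i$ is contained in a bounded subset of the affine-linear space $C$, so that counting lattice points in it reduces to a standard volume/covering argument inside $\Lambda \cap C$. First I would observe that $Z^i = H^i \times R$, where $H^i = H \cap \{\bs{x}_i = \bs{0}\}$. On $H^i$, the coordinate block $\bs{x}_i$ vanishes identically, so every point of $W^i$ has its $i$-th $V_{\bs{m}}$-block equal to $\bs{0}$. The key point is that the definition of $\mc{H}$ (hence of $H$) forces $\textup{Nm}_{\bs{\beta}}(\underline{\bs{x}})^{1/\mc{B}} < \varepsilon$; but if $\bs{x}_i = \bs{0}$ then $\textup{Nm}_{\bs{\beta}}(\underline{\bs{x}}) = 0$, so this constraint is automatically satisfied, and the only surviving constraints on the remaining blocks $\bs{x}_{i'}$ ($i' \neq i$) are $|\bs{x}_{i'}|_2 \leq T$. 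Thus $H^i$ sits inside the bounded set $\{\bs{x}_i = \bs{0}\} \cap \{|\bs{x}_{i'}|_2 \leq T\ \forall i'\}$, and $Z^i \subseteq \{\bs{x}_i = \bs{0}\} \cap \{|\bs{x}_{i'}|_2 \leq T, |\bs{y}_j|_2 \leq Q_j\}$.

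Next I would relate $W^i$ to $C$. Since $\Lambda$ is weakly admissible for $(\mc{S}, C)$, the definition of $\nu(\Lambda, \varrho)$ says that any $\underline{\bs{v}} \in \Lambda \setminus C$ with bounded norm has $\textup{Nm}_{(\bs{\beta},\bs{\gamma})}(\underline{\bs{v}}) > 0$, i.e.\ none of the blocks indexed by $I$ or $J$ vanishes. But a lattice point in $W^i$ has its $i$-th $\bs{x}$-block equal to $\bs{0}$; if $i \in J$ (in the theorem's index convention, the $\bs{x}$-blocks that can be set to $\bs{0}$ in $C$), then having $\bs{x}_i = \bs{0}$ while $\underline{\bs{v}} \notin C$ would mean some other $C$-block is nonzero, which does not immediately give a contradiction. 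So the cleaner route is: a point of $\Lambda \cap W^i$ either lies in $C$ or lies in $\Lambda \setminus C$; in the latter case the weak-admissibility bound forces $\textup{Nm}_{(\bs{\beta},\bs{\gamma})}$ bounded below, but the point also lies in $Z^i \subset Z$ where $\textup{Nm}_{\bs{\beta}}(\underline{\bs{x}}) < \varepsilon^{\mc{B}}$ and $\textup{Nm}_{\bs{\gamma}}(\underline{\bs{y}}) \leq Q^{\mc{C}}$ are both bounded — and in fact $\textup{Nm}_{\bs{\beta}}(\underline{\bs{x}}) = 0$ since $\bs{x}_i = \bs{0}$ contributes a zero factor (assuming $\beta_i > 0$, which holds). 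Hence $\textup{Nm}_{(\bs{\beta},\bs{\gamma})}(\underline{\bs{v}}) = 0$, contradicting weak admissibility unless $\underline{\bs{v}} \in C$. Therefore $\Lambda \cap W^i = \Lambda \cap W^i \cap C \subseteq (\Lambda \cap C) \cap (W \cap C)$, and in particular $\#(\Lambda \cap W^i) \leq \#\big((\Lambda \cap C) \cap (W \cap C)\big)$.

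Now I would finish with a routine geometry-of-numbers estimate inside the affine lattice $\Lambda \cap C$, which has full rank $c := \dim C$ in the subspace $C$. The number of points of a rank-$c$ lattice with shortest vector length $\lambda_1(\Lambda \cap C)$ lying in a bounded set $S \subseteq C$ is $\ll_c 1 + (\diam S / \lambda_1(\Lambda \cap C))^c$: one covers $S$ by $O(1 + (\diam S/\lambda_1)^c)$ translates of a fundamental cell scaled to have diameter $\asymp \lambda_1$, each of which contains $O_c(1)$ lattice points since two lattice points in a common small cell would be closer than $\lambda_1$. Applying this with $S = W \cap C$ gives
\begin{equation}
\#(\Lambda \cap W^i) \leq \#\big((\Lambda \cap C) \cap (W \cap C)\big) \ll_{c} 1 + \left(\frac{\diam(W \cap C)}{\lambda_1(\Lambda \cap C)}\right)^{c},\nonumber
\end{equation}
and since $c$ is determined by $\bs{m}, \bs{n}$ (it is a sum of the $m_i$ and $n_j$ over indices not in $I \cup J$), the implied constant depends only on $\bs{m}, \bs{n}$, as claimed.

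The main obstacle I anticipate is purely bookkeeping: carefully matching the index conventions (the roles of $I$ versus $J$, and which blocks of $V = V_{\bs{m}} \times V_{\bs{n}}$ are forced to vanish in $C$ versus in $Z^i$) so that the vanishing of $\bs{x}_i$ genuinely forces $\textup{Nm}_{(\bs{\beta},\bs{\gamma})} = 0$ and hence membership in $C$ — one must make sure $\beta_i > 0$ is being used and that no edge case (e.g.\ $i \notin J$) breaks the implication. The covering estimate itself is standard and I would cite it or dispatch it in one line.
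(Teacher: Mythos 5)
Your proposal is correct and follows essentially the same route as the paper: weak admissibility forces $\Lambda\cap W^{i}\subset\Lambda\cap W\cap C$ (since $\bs{x}_{i}=\bs{0}$ makes $\textup{Nm}_{(\bs{\beta},\bs{\gamma})}$ vanish), and then a standard packing estimate for $\lambda_{1}$-separated points in a set of diameter $\diam(W\cap C)$ gives the bound, with the exponent harmlessly enlarged to $c$. The only cosmetic inaccuracy is your assertion that $\Lambda\cap C$ has full rank $c$ in $C$ --- the paper is careful to allow $\Lambda\cap C$ to span only a proper subspace $C'\subsetneq C$ --- but your separation/covering argument does not actually use full rank, so nothing breaks.
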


\begin{proof}
By weak admissibility, we have $\Lambda\cap W_{i}\subset\Lambda\cap W\cap C$. Therefore, it is enough to estimate $\#(\Lambda\cap W\cap C)$. Now, $\Lambda\cap C$ is either $\{(\bs{0},\bs{0})\}$ or a full rank lattice in some subspace $C'\subset C$ with $\textup{dim}(C')=c'>0$. To prove the claim, it suffices to show that for any bounded set $S\subset\mb{R}^{n}$ and any full rank lattice $\Gamma\subset\mb{R}^{n}$ we have
\begin{equation}
\label{eq:convexestimate}
\#(\Gamma\cap S)\ll_{n}1+\left(\frac{\diam S}{\lambda_{1}(\Gamma)}\right)^{n}.
\end{equation}
This follows easily from \cite[Lemmas 2.1 and 2.2]{Widmer:CountingLattice}.
Applying (\ref{eq:convexestimate}) to $\left(W\cap C'\right)\cap\left(\Lambda\cap C'\right)$ and noting that $c'\leq\mc{M}+\mc{N}-1$ and $\lambda_{1}(\Lambda\cap C)=\lambda_{1}(\Lambda\cap C')$ yields
\begin{multline}
\#\left(\Lambda\cap W^{i}\right)\leq\#\left((W\cap C')\cap(\Lambda\cap C')\right)\ll_{c'}1+\left(\frac{\diam(W\cap C)}{\lambda_{1}(\Lambda\cap C)}\right)^{c'} \\
\ll_{\bs{m},\bs{n}} 1+\left(\frac{\diam(W\cap C)}{\lambda_{1}(\Lambda\cap C)}\right)^{c}\nonumber.
\end{multline}
Note that in the last inequality we can replace $c'$ by a bigger integer, due to the definition of the constant in (\ref{eq:convexestimate}) (see again \cite[Lemmas 2.1 and 2.2]{Widmer:CountingLattice}).
\end{proof}

We are left to estimate the quantity $\#\left(\hat{\varphi}_{k}(\Lambda)\cap \hat{\varphi}_{k}\left(W\cap\hat{X}_{k}\right)\right)-\Vol\,\hat{\varphi}_{k}\left(W\cap\hat{X}_{k}\right)/\det\hat{\varphi}_{k}(\Lambda)$ for $k\in\mc{K}$. By Corollary \ref{cor:partition}, we know that
\begin{equation}
\label{eq:27}
\hat{\varphi}_{k}\left(W\cap\hat{X}_{k}\right)\subset\hat{\varphi}_{k}\left(\hat{X}_{k}\right)\subset\left\{|\bs{x}_{i}|_{2}\leq\varepsilon\ i=1,\dotsc,M\right\}\times R.
\end{equation}
To make the counting more effective, we reshape the set on the right-hand side of (\ref{eq:27}) into a ball-like shaped set. Let $\omega_{1}:V\to V$ be the map
$$\omega_{1}(\underline{\bs{x}},\underline{\bs{y}}):=\left(\underline{\bs{x}},\frac{Q}{Q_{1}}\bs{y}_{1},\dotsc,\frac{Q}{Q_{N}}\bs{y}_{N}\right),$$
and let $\omega_{2}:V\to V$ be the map
$$\left(\underline{\bs{x}},\underline{\bs{y}}\right)\mapsto\left(\theta\underline{\bs{x}},\theta^{-\frac{\mc{B}}{\mc{C}}}\underline{\bs{y}}\right),$$
where
$$\theta:=\frac{\left(\varepsilon^{\mc{B}} Q^{\mc{C}}\right)^{\frac{1}{\mc{B}+\mc{C}}}}{\varepsilon}.$$   
Then, we have
\begin{multline}
\label{eq:28}
\omega_{2}\circ\omega_{1}(\left\{|\bs{x}_{i}|_{2}\leq\varepsilon\ i=1,\dotsc,M\right\}\times R) \\
=\left\{|\bs{x}_{i}|_{2}\leq\left(\varepsilon^{\mc{B}}Q^{\mc{C}}\right)^{\frac{1}{\mc{B}+\mc{C}}}\ i=1,\dotsc,M\right\}\times\left\{|\bs{y}_{j}|_{2}\leq\left(\varepsilon^{\mc{B}}Q^{\mc{C}}\right)^{\frac{1}{\mc{B}+\mc{C}}}\ j=1,\dotsc,N\right\}.
\end{multline}

Now, to complete the estimate we use the following general counting result \cite[Theorem 1.3]{Widmer:CountingLattice}, which we state for a vector space of the form $V_{\bs{l}}$ and a definable family.
\begin{theorem}[Barroero-Widmer]
\label{thm:latticeest}
Let $\bs{l}\in\mb{N}^{L}$ and let $\mc{L}:=\sum_{h=1}^{L}l_{h}$. Let also $t\in\mb{N}$. Consider a full rank lattice $\Lambda\subset V_{\bs{l}}$ and a definable family $\mc{W}'\subseteq V_{\bs{l}}\times\mb{R}^{t}$. Suppose that each fibre $W_{\bs{\tau}}'$ of $\mc{W}'$ is bounded. Then, there exists a constant $c_{\mc{W}'}\in\mb{R}$, only depending on $\mc{W}'$, such that
$$\left|\#(\Lambda\cap W_{\bs{\tau}}')-\frac{\Vol\,W_{\bs{\tau}}'}{\det\Lambda}\right|\leq c_{\mc{W}'}\sum_{s=0}^{\mc{L}-1}\frac{V_{s}(W_{\bs{\tau}}')}{\lambda_{1}\dotsm\lambda_{s}},$$
where $V_{s}\left(W_{\bs{\tau}}'\right)$ is the sum of the $s$-dimensional volumes of the orthogonal projections of $W_{\bs{\tau}}'$ onto every $s$-dimensional coordinate subspace of $V_{\bs{l}}$, and $\lambda_{s}$ is the $s$-th successive minimum of the lattice $\Lambda$ with respect to the Euclidean unit ball. By convention, $V_{0}(W_{\bs{\tau}}')=\lambda_{0}=1$.
\end{theorem}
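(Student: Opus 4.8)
The plan is to reduce the count over the general lattice $\Lambda$ to a count over $\mb{Z}^{n}$ (where $n:=\mc{L}$ and we identify $V_{\bs{l}}$ with $\mb{R}^{n}$), and then to combine Davenport's classical ``principle of Lipschitz'' with the uniform finiteness available in o-minimal structures. Fix $\bs{\tau}$ and write $W:=W'_{\bs{\tau}}$. By Minkowski's second theorem choose a basis $\bs{b}_{1},\dotsc,\bs{b}_{n}$ of $\Lambda$ with $|\bs{b}_{i}|_{2}\ll_{n}\lambda_{i}$ for all $i$, and let $\Phi$ be the linear automorphism of $\mb{R}^{n}$ sending the $i$-th standard basis vector to $\bs{b}_{i}$, so that $\Lambda=\Phi(\mb{Z}^{n})$, $|\det\Phi|=\det\Lambda$, $\#(\Lambda\cap W)=\#(\mb{Z}^{n}\cap\Phi^{-1}W)$ and $\Vol\,W/\det\Lambda=\Vol\,\Phi^{-1}W$.

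First I would extract the o-minimal input. Since matrix inversion, matrix multiplication and the parametrisation of affine lines are semi-algebraic, the family whose fibre above $(\bs{\tau},\Phi,\ell)$ is the intersection of $\Phi^{-1}W'_{\bs{\tau}}$ with the line $\ell$ is definable in $\mf{S}$; likewise for the families obtained by first projecting $\Phi^{-1}W'_{\bs{\tau}}$ orthogonally onto a coordinate subspace. By the uniform finiteness theorem for o-minimal structures (a consequence of cell decomposition), the number of connected components of the fibres of a definable family is uniformly bounded; hence there is an integer $h$, depending only on $\mc{W}'$ and $n$ and in particular independent of $\Lambda$, such that $\Phi^{-1}W$ and each of its orthogonal projections onto coordinate subspaces meet every axis-parallel line in at most $h$ intervals, for every $\bs{\tau}$ and every $\Phi$.

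Next I would invoke Davenport's lemma for $Y:=\Phi^{-1}W$ (which is bounded, since $W$ is): under the condition just established, $\left|\#(\mb{Z}^{n}\cap Y)-\Vol\,Y\right|\ll_{n,h}\sum_{j=0}^{n-1}V_{j}(Y)$, where $V_{j}(Y)$ is the sum of the $j$-dimensional volumes of the orthogonal projections of $Y$ onto the $j$-dimensional coordinate subspaces (and $V_{0}(Y)=1$). The technical heart is then to bound $V_{j}(\Phi^{-1}W)$ in terms of $V_{j}(W)$ and the successive minima. For a $j$-element subset $I\subseteq\{1,\dotsc,n\}$ the projection of $\Phi^{-1}W$ onto the $I$-coordinate subspace equals $(\pi_{I}\circ\Phi^{-1})(W)$, and $\pi_{I}\circ\Phi^{-1}\colon\mb{R}^{n}\to\mb{R}^{j}$ is linear; by the Cauchy--Binet formula, $\Vol_{j}\big((\pi_{I}\circ\Phi^{-1})(W)\big)\leq\sum_{|K|=j}|\det M_{I,K}|\cdot\Vol_{j}(\pi_{K}W)$, where $M_{I,K}$ is the submatrix of $\Phi^{-1}$ with rows $I$ and columns $K$. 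The rows of $\Phi^{-1}$ are the dual basis vectors $\bs{b}_{i}^{*}$, and $|\bs{b}_{i}^{*}|_{2}=\Vol_{n-1}(P_{i})/\det\Lambda\ll_{n}\prod_{k\neq i}\lambda_{k}/\det\Lambda\ll_{n}\lambda_{i}^{-1}$, where $P_{i}$ is the parallelepiped spanned by $\{\bs{b}_{k}\colon k\neq i\}$ and the last step uses Minkowski's second theorem in the form $\det\Lambda\gg_{n}\lambda_{1}\dotsm\lambda_{n}$. By Hadamard's inequality, $|\det M_{I,K}|\leq\prod_{i\in I}|\bs{b}_{i}^{*}|_{2}\ll_{n}\prod_{i\in I}\lambda_{i}^{-1}\leq(\lambda_{1}\dotsm\lambda_{j})^{-1}$, since the elements of $I$ listed in increasing order are at least $1,\dotsc,j$ respectively and the $\lambda_{i}$ are non-decreasing; summing over $I$ and $K$ gives $V_{j}(\Phi^{-1}W)\ll_{n}V_{j}(W)/(\lambda_{1}\dotsm\lambda_{j})$.

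Combining the last three estimates yields $\left|\#(\Lambda\cap W)-\Vol\,W/\det\Lambda\right|\ll_{n,h}\sum_{j=0}^{n-1}V_{j}(W)/(\lambda_{1}\dotsm\lambda_{j})$, and taking $c_{\mc{W}'}$ to be this implied constant, which depends only on $n$ and $h$ and hence only on $\mc{W}'$, proves the theorem; the conventions $V_{0}=\lambda_{0}=1$ handle the $j=0$ term. I expect the main obstacle to be the second step: identifying precisely the right enlarged definable family and quoting the uniform-finiteness statement in a form that makes $h$ genuinely independent of the lattice $\Lambda$, together with citing Davenport's lemma with a constant depending only on $n$ and $h$. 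The linear-algebra estimate of the projection volumes in the fourth step is routine once one works with the dual basis and Minkowski's second theorem as above.
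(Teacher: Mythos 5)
First, a point of reference: the paper does not prove this statement at all --- it is quoted verbatim from Barroero--Widmer \cite{Widmer:CountingLattice} (Theorem 1.3 there), so there is no internal proof to compare against. Your architecture (reduce to $\mb{Z}^{\mc{L}}$ via a Minkowski-reduced basis, supply the ``at most $h$ intervals'' hypotheses of Davenport's lemma uniformly by including the matrix $\Phi$ among the parameters of a definable family and invoking uniform finiteness, then convert $V_{j}(\Phi^{-1}W)$ into $V_{j}(W)/(\lambda_{1}\dotsm\lambda_{j})$ using the dual basis, Hadamard and Minkowski's second theorem) is essentially the strategy of the cited source, and the linear-algebra part (the bound $|\bs{b}_{i}^{*}|_{2}\ll_{n}\lambda_{i}^{-1}$ and the minor bound $\ll_{n}(\lambda_{1}\dotsm\lambda_{j})^{-1}$) is correct.

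There is, however, a genuine gap exactly at the step you call routine. The inequality $\Vol_{j}\bigl((\pi_{I}\circ\Phi^{-1})(W)\bigr)\leq\sum_{|K|=j}|\det M_{I,K}|\,\Vol_{j}(\pi_{K}W)$ is not a consequence of Cauchy--Binet and is false for general bounded sets: for $n=2$, $j=1$, $A=(1,1)$ and $W=C\times C$ with $C$ the middle-thirds Cantor set, the left side is $\Vol_{1}(C+C)=2$ while the right side is $0$; and even within the definable category the constant cannot be $1$, since a union of $m$ disjoint horizontal unit segments with identical first-coordinate projection makes the left side of order $m$ while the right side stays bounded. Cauchy--Binet controls the volume of the image of a \emph{cube} (a zonotope), and passing from that to images of the fibres $W_{\bs{\tau}}'$ requires controlling multiplicities, i.e.\ exactly the same o-minimal uniform-finiteness input (over the enlarged family including $\Phi$ as a parameter) that you invoke only for the line-intersection counts; the resulting constant then depends on the family $\mc{W}'$, not merely on $n$. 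This multiplicity control for the projected, transformed sets is the technical heart of the Widmer/Barroero--Widmer argument (it is where Davenport's interval hypotheses are really used), so as written your step four does not go through, although the intended final statement --- with $c_{\mc{W}'}$ depending on the family --- is of course the true one.
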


\noindent We fix $k\in\mc{K}$, and we apply Theorem \ref{thm:latticeest} to the family
$$\mc{S}_{k}:=\left\{(\omega_{2}\circ\omega_{1}\circ\hat{\varphi}_{k}(\underline{\bs{v}}),\bs{\tau}):(\underline{\bs{v}},\bs{\tau})\in \mc{W}\cap\left(\hat{X}_{k}\times\mb{R}^{t}\right)\right\}\subset V\times\mb{R}^{t}.$$
This family is definable in view of Definition \ref{def:definablestruc} and part $ii)$ of Corollary \ref{cor:partition} (note that $\omega_{2}\circ\omega_{1}\circ\hat{\varphi}_{k}$ is a definable map). Moreover, since the fibres of $\mc{Z}$ are bounded, the same holds true for the fibres of $\mc{S}_{k}$. Hence, by Theorem \ref{thm:latticeest}, Lemma \ref{lem:LambdacapC}, and Equations (\ref{eq:estimate1}) and (\ref{eq:28}), we have
\begin{align}
\label{eq:estimate3} 
 & \left|\#(\Lambda\cap W)-\frac{\Vol\,W}{\det\Lambda}\right|\leq\nonumber \\
 & \sum_{k\in\mc{K}}\left|\#\left(\omega_{2}\circ\omega_{1}\circ\hat{\varphi}_{k}(\Lambda)\cap \omega_{2}\circ\omega_{1}\circ\hat{\varphi}_{k}\left(W\cap\hat{X}_{k}\right)\right)-\frac{\Vol\,\omega_{2}\circ\omega_{1}\circ\hat{\varphi}_{k}\left(W\cap\hat{X}_{k}\right)}{\det\omega_{2}\circ\omega_{1}\circ\hat{\varphi}_{k}(\Lambda)}\right|\nonumber \\
 & +\sum_{i=1}^{M}\#(\Lambda\cap W^{i})\ll_{\mc{W},\bs{\beta},\bs{\gamma}}\left(\sum_{k\in\mc{K}}\sum_{s=0}^{\mc{M}+\mc{N}-1}\frac{\left(\varepsilon^{\mc{B}}Q^{\mc{C}}\right)^{\frac{s}{\mc{B}+\mc{C}}}}{\lambda_{1}\left(\omega_{2}\circ\omega_{1}\circ\hat{\varphi}_{k}(\Lambda)\right)^{s}}\right)+\left(\frac{\diam(W\cap C)}{\lambda_{1}(\Lambda\cap C)}\right)^{c},
\end{align}
where $\lambda_{1}\left(\omega_{2}\circ\omega_{1}\circ\hat{\varphi}_{k}(\Lambda)\right)$ is the first successive minimum of the lattice $\omega_{2}\circ\omega_{1}\circ\hat{\varphi}_{k}(\Lambda)$.
\begin{prop}
\label{prop:firstmin}
Let $k\in\mc{K}$ and let $\lambda_{1}$ be the first successive minimum of the lattice \\
$\omega_{2}\circ\omega_{1}\circ\hat{\varphi}_{k}(\Lambda)$. Then,
$$\lambda_{1}\gg_{\bs{m},\bs{n},\bs{\beta},\bs{\gamma}}\min\left\{\nu(\Lambda,r),\ \left(\varepsilon^{\mc{B}} Q^{\mc{C}}\right)^{\frac{1}{\mc{B}+\mc{C}}}\frac{r}{\diam\,Z},\ \left(\varepsilon^{\mc{B}} Q^{\mc{C}}\right)^{\frac{1}{\mc{B}+\mc{C}}}\frac{\lambda_{1}(\Lambda\cap C)}{\diam(Z\cap C)}\right\}$$
for all $r>0$. By convention, the last term is $+\infty$ if $C=\{(\bs{0},\bs{0})\}$.
\end{prop}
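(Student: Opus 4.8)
The plan is to take an arbitrary nonzero vector $\bs{w}$ in the lattice $\Delta_k := \omega_2\circ\omega_1\circ\hat\varphi_k(\Lambda)$, write it as $\bs{w} = \omega_2\circ\omega_1\circ\hat\varphi_k(\underline{\bs{v}})$ for some $\underline{\bs{v}} = (\underline{\bs{x}},\underline{\bs{y}})\in\Lambda\setminus\{\bs0\}$, and bound $|\bs{w}|_2$ from below in two regimes depending on whether $\underline{\bs{v}}$ lies in $C$ or not. The key observation is that all three maps $\hat\varphi_k$, $\omega_1$, $\omega_2$ are diagonal (block-diagonal on the $V_{\bs m}$- and $V_{\bs n}$-factors), so the Euclidean norm of $\bs{w}$ factors through the norms of the blocks, and — crucially — the \emph{product} norm $\Nm_{\bs\beta}$ is nearly invariant under $\hat\varphi_k$ by part $iiib$ of Proposition \ref{prop:partition} ($\sum_i\beta_i a_i^k = 0$ gives $\Nm_{\bs\beta}(\varphi_k(\underline{\bs{x}})) = e^{-c\mc B}\Nm_{\bs\beta}(\underline{\bs{x}})$), while $\omega_1$ scales $\Nm_{\bs\gamma}$ by the fixed factor $\prod_j (Q/Q_j)^{\gamma_j} = Q^{\mc C}/\prod_j Q_j^{\gamma_j} = 1$ and $\omega_2$ multiplies $\Nm_{(\bs\beta,\bs\gamma)}$ by $\theta^{\mc B}\theta^{-\mc B/\mc C\cdot\mc C} = 1$. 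Hence $\Nm_{(\bs\beta,\bs\gamma)}(\bs{w}) \asymp_{\bs\beta} \Nm_{(\bs\beta,\bs\gamma)}(\underline{\bs{v}})$ up to the constant $e^{-c\mc B}$.

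First, the case $\underline{\bs{v}}\notin C$. Fix $r>0$. If $|\underline{\bs{v}}|_2 < r$, then by weak admissibility $\Nm_{(\bs\beta,\bs\gamma)}(\underline{\bs{v}})^{1/(\mc B+\mc C)} \geq \nu(\Lambda,r)$, and combining with the norm-comparison above and the elementary bound $|\bs{w}|_2 \gg_{\bs m,\bs n} \Nm_{(\bs\beta,\bs\gamma)}(\bs{w})^{1/(\mc B+\mc C)}$ valid \emph{inside the target box} — here one uses part $iv$/$iii$ of Corollary \ref{cor:partition}, namely that the $\underline{\bs{x}}$-block of $\hat\varphi_k(\hat X_k)$ has each $|\bs x_i|_2\leq\varepsilon$, so after $\omega_2\circ\omega_1$ each coordinate block is bounded by $(\varepsilon^{\mc B}Q^{\mc C})^{1/(\mc B+\mc C)}$; a product of the form $\prod |\bs u_i|^{a_i}$ with all $|\bs u_i|$ bounded above by $B$ and $\sum a_i = \mc A$ forces $\max|\bs u_i| \gg (\prod|\bs u_i|^{a_i})^{1/\mc A}/B^{\text{something}}$, and a short computation gives the clean factor — one sees $|\bs{w}|_2 \gg \nu(\Lambda,r)$ up to constants depending on $\bs m,\bs n,\bs\beta,\bs\gamma$. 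If instead $|\underline{\bs{v}}|_2 \geq r$, I use that $\hat\varphi_k$ does not shrink coordinates too much: by part $iiia$ of Proposition \ref{prop:partition}, $\exp(a_i^k - c)\gg_{\bs m,\bs\beta}\varepsilon/T$, so $|\hat\varphi_k(\underline{\bs{v}})|_2 \gg (\varepsilon/T)|\underline{\bs{v}}|_2$; then $\omega_1$ changes norms by bounded factors (ratios $Q/Q_j$) and $\omega_2$ multiplies the $\underline{\bs x}$-part by $\theta = (\varepsilon^{\mc B}Q^{\mc C})^{1/(\mc B+\mc C)}/\varepsilon$, so tracking the $\underline{\bs x}$-block alone gives $|\bs{w}|_2 \gg (\varepsilon^{\mc B}Q^{\mc C})^{1/(\mc B+\mc C)} |\underline{\bs{v}}|_2 / T \gg (\varepsilon^{\mc B}Q^{\mc C})^{1/(\mc B+\mc C)} r/\diam Z$ after noting $T\ll\diam Z$. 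Taking the worse of the two subcases yields $|\bs{w}|_2 \gg_{\bs m,\bs n,\bs\beta,\bs\gamma} \min\{\nu(\Lambda,r),\,(\varepsilon^{\mc B}Q^{\mc C})^{1/(\mc B+\mc C)}r/\diam Z\}$.

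Second, the case $\underline{\bs{v}}\in C$. Then $\underline{\bs{v}}\in\Lambda\cap C$, and since $\hat\varphi_k$ fixes $C$ setwise (it is diagonal and $C$ is a coordinate subspace), the only effect of the composition on the $C$-part is through $\omega_1$ and $\omega_2$; tracking the block of $\bs{w}$ that survives, $|\bs{w}|_2 \gg (\varepsilon^{\mc B}Q^{\mc C})^{1/(\mc B+\mc C)}\,\theta^{-\mc B/\mc C}$-type factors times $|\underline{\bs{v}}|_2 \geq \lambda_1(\Lambda\cap C)$; comparing with $\diam(Z\cap C)$ through the scaling of $\omega_2$ gives the third term $(\varepsilon^{\mc B}Q^{\mc C})^{1/(\mc B+\mc C)}\lambda_1(\Lambda\cap C)/\diam(Z\cap C)$. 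Taking the minimum over the two cases (and over all nonzero $\bs{w}$) completes the proof.

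The main obstacle I expect is the bookkeeping in the first subcase: producing the clean lower bound $|\bs{w}|_2 \gg \Nm_{(\bs\beta,\bs\gamma)}(\bs{w})^{1/(\mc B+\mc C)}$ requires that \emph{after} applying $\omega_2\circ\omega_1\circ\hat\varphi_k$, \emph{every} coordinate block is bounded above by $(\varepsilon^{\mc B}Q^{\mc C})^{1/(\mc B+\mc C)}$ — for the $\underline{\bs x}$-blocks this is exactly (\ref{eq:28}), but one must be careful that a point of $\Lambda$ need not lie in $\hat\varphi_k(\hat X_k)$, so the bound on $\max_i|\bs x_i|_2$ has to come from combining weak admissibility (controlling the product from below) with the geometry, not from membership in the box; the correct move is to note that if the $\underline{\bs x}$-part of $\bs{w}$ had a block much larger than $(\varepsilon^{\mc B}Q^{\mc C})^{1/(\mc B+\mc C)}$, then since $|\underline{\bs v}|_2<r$ forces $|\underline{\bs v}|_2$ small, the large block would contradict $iiia$; I would make this precise by splitting once more on the size of $\max_i|\bs x_i(\bs w)|_2$ versus the target scale, absorbing the easy branch into the $\diam Z/r$ term exactly as in the $|\underline{\bs v}|_2\geq r$ subcase. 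Everything else is diagonal-map algebra and the two invariance identities $\sum_i\beta_i a_i^k=0$ and $\theta^{\mc B} = (\theta^{-\mc B/\mc C})^{\mc C}$.
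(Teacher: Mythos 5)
Your overall strategy coincides with the paper's: write a nonzero vector $\bs{w}$ of $\omega_{2}\circ\omega_{1}\circ\hat{\varphi}_{k}(\Lambda)$ as the image of some $(\underline{\bs{x}},\underline{\bs{y}})\in\Lambda$ and split into the three cases $(\underline{\bs{x}},\underline{\bs{y}})\notin C$ with $|(\underline{\bs{x}},\underline{\bs{y}})|_{2}<r$, $|(\underline{\bs{x}},\underline{\bs{y}})|_{2}\geq r$, and $(\underline{\bs{x}},\underline{\bs{y}})\in C$, using the near-invariance of $\Nm_{(\bs{\beta},\bs{\gamma})}$ under the three diagonal maps (via $\sum_{i}\beta_{i}a^{k}_{i}=0$ and $\theta^{\mc{B}}(\theta^{-\mc{B}/\mc{C}})^{\mc{C}}=1$, exactly as you compute) in the first case and coordinatewise lower bounds on the scaling factors in the other two. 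However, the ``main obstacle'' you identify is not there: the bound $|\bs{w}|_{2}\gg_{\bs{\beta},\bs{\gamma}}\Nm_{(\bs{\beta},\bs{\gamma})}(\bs{w})^{1/(\mc{B}+\mc{C})}$ is the \emph{unconditional} weighted arithmetic--geometric mean inequality applied to the squared block norms with weights $\beta_{i},\gamma_{j}$ (the geometric mean is at most the arithmetic mean, which is $\ll|\bs{w}|_{2}^{2}$). No upper bound on the individual blocks, and hence no appeal to (\ref{eq:28}) or to membership of the lattice point in $\hat{X}_{k}$, is needed; such upper bounds would only be relevant for the reverse inequality. Your proposed patch (``the large block would contradict $iiia$'') also does not parse, since $iiia)$ is a lower bound on the scaling factors and is not contradicted by a large coordinate; fortunately no patch is required, and this AM--GM step is precisely what the paper does.

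There is a genuine gap in your second regime: when $|(\underline{\bs{x}},\underline{\bs{y}})|_{2}\geq r$ you conclude by ``tracking the $\underline{\bs{x}}$-block alone,'' which fails when the norm is concentrated in the $\underline{\bs{y}}$-part (then $|\underline{\bs{x}}|_{2}$ may be arbitrarily small and the chain $|\bs{w}|_{2}\gg(\varepsilon^{\mc{B}}Q^{\mc{C}})^{1/(\mc{B}+\mc{C})}|(\underline{\bs{x}},\underline{\bs{y}})|_{2}/T$ is unjustified). Moreover, $\omega_{1}$ does \emph{not} change norms by bounded factors: the ratios $Q/Q_{j}$ depend on $\bs{Q}$ and can be arbitrarily large or small. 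The paper instead observes that some single block has norm $\geq r/\sqrt{M+N}$ and treats the two subcases separately: if it is an $\bs{x}_{i_{0}}$-block one uses $\theta\exp(a^{k}_{i_{0}}-c)\gg(\varepsilon^{\mc{B}}Q^{\mc{C}})^{1/(\mc{B}+\mc{C})}/T$, and if it is a $\bs{y}_{j_{0}}$-block one uses $\theta^{-\mc{B}/\mc{C}}Q/Q_{j_{0}}=(\varepsilon^{\mc{B}}Q^{\mc{C}})^{1/(\mc{B}+\mc{C})}/Q_{j_{0}}$ together with $Q_{j_{0}}\leq\diam Z$. The same block-by-block bookkeeping (restricted to the coordinates not annihilated by $C$, all of whose scaling factors are $\gg(\varepsilon^{\mc{B}}Q^{\mc{C}})^{1/(\mc{B}+\mc{C})}/\diam(Z\cap C)$) makes your sketch of the case $(\underline{\bs{x}},\underline{\bs{y}})\in C$ precise. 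These are repairs within your framework rather than a change of method.
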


We prove Proposition \ref{prop:firstmin} in Section \ref{sec:minimaproof}. Note that $C\subsetneq V$ implies $c\leq\mc{M}+\mc{N}-1$. Hence, combining (\ref{eq:estimate3}) and Proposition \ref{prop:firstmin}, we get that for all $r>0$
\begin{equation}
\left|\#(\Lambda\cap W)-\frac{\Vol\,W}{\det\Lambda}\right|\ll_{\mc{W},\bs{\beta},\bs{\gamma}}\#\mc{K}\left(1+\left(\frac{\left(\varepsilon^{\mc{B}} Q^{\mc{C}}\right)^{\frac{1}{\mc{B}+\mc{C}}}}{\nu(\Lambda,r)}+\frac{\diam\,Z}{r}+\frac{\diam(Z\cap C)}{\lambda_{1}(\Lambda\cap C)}\right)^{\mc{M}+\mc{N}-1}\right),\nonumber
\end{equation}
where the last term is null if $C=\{(\bs{0},\bs{0})\}$. It follows that
\begin{multline}
\left|\#(\Lambda\cap W)-\frac{\Vol\,W}{\det\Lambda}\right|\ll_{\mc{W},\bs{\beta},\bs{\gamma}}\nonumber \\
\inf_{0<r\leq \diam\,Z}\#\mc{K}\left(\frac{\left(\varepsilon^{\mc{B}} Q^{\mc{C}}\right)^{\frac{1}{\mc{B}+\mc{C}}}}{\nu(\Lambda,r)}+\frac{\diam\,Z}{r}+\frac{\diam(Z\cap C)}{\lambda_{1}(\Lambda\cap C)}\right)^{\mc{M}+\mc{N}-1}.\nonumber
\end{multline}
By Proposition \ref{prop:partition}, we have $\#\mc{K}\ll_{\bs{m},\bs{\beta}}\log\left(T/\varepsilon\right)^{M-1}$ and thus, the proof is complete.

\section{Proof of Proposition \ref{prop:partition}}
\label{sec:partitionproof}

In this section, we use the notation $S_{+}$ to indicate the set $S\setminus\bigcup_{i=1}^{M}\{\bs{x}_{i}=\bs{0}\}$ for any subset $S$ of $V_{\bs{m}}$.

\subsection{A partition of the boundary}

We start off by constructing a partition of the set $(\partial H)_{+}$ and a collection of linear maps defined on $V_{\bs{m}}$, that satisfy parts $i)-iv)$ for the set $(\partial H)_{+}$. Then, we extend this partition to $H_{+}$ by taking cones, and we prove that the same maps work for the whole set. 
To this end, we consider the sets
$$(\partial H)_{\textup{hyp}}:=\left\{\underline{\bs{x}}\in V_{\bs{m}}:\prod_{i=1}^{M}\left|\bs{x}_{i}\right|_{2}^{\beta_{i}}=\varepsilon^{\mc{B}},|\bs{x}_{1}|_{2},\dotsc,|\bs{x}_{M}|_{2}\leq T\right\}$$
and $(\partial H)_{\textup{non-hyp}}:=(\partial H)_{+}\setminus (\partial H)_{\textup{hyp}}$.

\subsection{The hyperbolic part}

We start by proving parts $i),iii)$, and $iv)$ for the set $(\partial H)_{\textup{hyp}}$. Let $\xi:\{\underline{\bs{x}}\in V_{\bs{m}}:\bs{x}_{i}\neq 0\ i=1,\dotsc,M\}\to\mb{R}^{M}$ with $\xi(\underline{\bs{x}})_{i}:=\log|\bs{x}_{i}|_{2}$ for $i=1,\dotsc,M$. We denote by $z_{i}$ the coordinates of the codomain of $\xi$. We also introduce the sets
$$\pi:=\left\{\bs{z}\in\mb{R}^{M}:\sum_{i=1}^{M}\beta_{i}z_{i}=\mc{B}\log\varepsilon\right\}$$
and
$$S:=\left(-\infty,\log T\right]^{M}.$$

\begin{lem}
\label{lem:toplev}
There exists a partition of $(\partial H)_{\textup{hyp}}$ of the form $(\partial H)_{\textup{hyp}}=\bigcup_{k\in\mc{K}_{\textup{hyp}}}\tilde{X}_{k}$, and there exists a collection of linear maps $\varphi_{k}:V_{\bs{m}}\to V_{\bs{m}}$ for $k\in\mc{K}_{\textup{hyp}}$, that satisfy parts $i),iii)$, and $iv)$ of Proposition \ref{prop:partition}.
\end{lem}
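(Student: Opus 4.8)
The goal is to partition the hyperbolic boundary piece $(\partial H)_{\textup{hyp}}$ and to attach to each piece a diagonal linear map $\varphi_k$ so that $\varphi_k$ squeezes that piece inside the cube $\{|\bs{x}_i|_2\le\varepsilon\}$, while keeping the number of pieces under $\log(T/\varepsilon)^{M-1}$ and the dilation factors bounded below by $\varepsilon/T$. The natural move is to pass to logarithmic coordinates via $\xi(\underline{\bs{x}})_i=\log|\bs{x}_i|_2$, which converts the multiplicative hypersurface $\prod|\bs{x}_i|_2^{\beta_i}=\varepsilon^{\mc B}$ into the affine hyperplane $\pi=\{\sum\beta_i z_i=\mc B\log\varepsilon\}$, and converts the box constraint $|\bs{x}_i|_2\le T$ into $\bs{z}\in S=(-\infty,\log T]^M$. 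So $\xi((\partial H)_{\textup{hyp}})=\pi\cap S$, a bounded convex polytope (bounded because each $z_i\le\log T$ and $\sum\beta_i z_i$ is fixed, hence each $z_i$ is bounded below too). The diagonal maps $\varphi_k(\underline{\bs x})_i=\exp(a_i^k-c)\bs{x}_i$ correspond in log-coordinates to the \emph{translations} $\bs{z}\mapsto\bs{z}+\bs{a}^k-c\bs{1}$; condition $iiib)$, $\sum\beta_i a_i^k=0$, says the translation vector is parallel to $\pi$, so $\varphi_k$ maps $\pi$ to itself up to the overall shift by $-c\bs 1$ (which lowers $\sum\beta_i z_i$ by $c\mc B$, and choosing $c$ appropriately in terms of $\bs m$ lands us at the right level for the target cube, since $|\bs x_i|_2\le\varepsilon$ is $z_i\le\log\varepsilon$).

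\textbf{Construction of the partition.} First I would fix a reference point on $\pi\cap S$, namely the ``balanced'' point $\bs{z}^\star$ with all $|\bs x_i|_2$ equal, i.e. $z_i^\star=\mc B\log\varepsilon/(\beta_1+\dots+\beta_M)$ for each $i$ — wait, more precisely the point of $\pi$ equidistant in the relevant sense; this lies in $S$ because $T/\varepsilon>e>1$ forces $z_i^\star<\log\varepsilon<\log T$. Then I would tile the polytope $\pi\cap S$ by boxes of log-side-length comparable to $\log(T/\varepsilon)$ — concretely, decompose along the first $M-1$ coordinates $z_1,\dots,z_{M-1}$ into unit-scale intervals scaled by $\log(T/\varepsilon)$, the last coordinate being determined by the hyperplane equation. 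Since each $z_i$ ranges over an interval of length $\ll_{\bs m,\bs\beta}\log(T/\varepsilon)$ (lower bound for $z_i$: from $z_j\le\log T$ for $j\ne i$ and $\sum\beta_j z_j=\mc B\log\varepsilon$ one gets $z_i\ge(\mc B\log\varepsilon-(\mc B-\beta_i)\log T)/\beta_i$, an interval of length $O_{\bs m,\bs\beta}(\log(T/\varepsilon))$), this yields $\ll_{\bs m,\bs\beta}\log(T/\varepsilon)^{M-1}$ boxes, giving part $i)$. For each box $\mathcal{B}_k$ I would let $\bs{z}^k$ be its ``lower-left'' corner (componentwise maximal point consistent with being inside, or the corner closest to the origin), set $a_i^k:=z_i^\star-z_i^k$ adjusted so that $\sum\beta_i a_i^k=0$ holds exactly — this is automatic if I choose $\bs z^k$ on $\pi$ itself, i.e. take corners lying in $\pi\cap S$ — and define $\tilde X_k:=\xi^{-1}(\mathcal B_k\cap\pi\cap S)$, $\varphi_k$ by the formula in $iii)$ with that $\bs a^k$ and with $c$ the fixed constant. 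Part $iv)$ then follows: a point $\underline{\bs x}\in\tilde X_k$ has $z_i=\xi(\underline{\bs x})_i$ within $O_{\bs m,\bs\beta}(\text{box size})$... no — I must be careful: to get $|\varphi_k(\underline{\bs x})_i|_2\le\varepsilon$ I need $z_i+a_i^k-c\le\log\varepsilon$, i.e. the box must be \emph{below} $\bs z^k+\text{(target)}$ in every coordinate, so I should take $\bs z^k$ to be the componentwise-maximal corner of $\mathcal B_k$ and choose $c$ large enough (depending only on $\bs m$, absorbing the $O(1)$ box overshoot) to guarantee $z_i+a_i^k-c\le\log\varepsilon$ throughout $\mathcal B_k$. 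Condition $iiia)$, $\exp(a_i^k-c)\gg_{\bs m,\bs\beta}\varepsilon/T$, follows because $a_i^k=z_i^\star-z_i^k$ with both $z_i^\star$ and $z_i^k$ lying in an interval of length $O_{\bs m,\bs\beta}(\log(T/\varepsilon))$ around $\log\varepsilon$, so $a_i^k\ge-O_{\bs m,\bs\beta}(\log(T/\varepsilon))$ and $\exp(a_i^k-c)\ge(\varepsilon/T)^{O_{\bs m,\bs\beta}(1)}$; here I may need to either weaken $iiia)$'s implied constant to an exponent (which the statement as written does not allow) or argue more tightly that the relevant corner is within an additive $O_{\bs m,\bs\beta}(\log(T/\varepsilon))$ — actually a single box width — of $\bs z^\star$, giving exactly $\exp(a_i^k-c)\gg_{\bs m,\bs\beta}\varepsilon/T$.

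\textbf{Main obstacle.} The delicate point is arranging all of $i)$, $iiia)$, $iiib)$, and $iv)$ simultaneously with the \emph{same} constant $c$ (depending only on $\bs m$, not on $\bs\beta$, $\varepsilon$, $T$, or $k$). Making the boxes coarse (side $\asymp\log(T/\varepsilon)$) is what keeps $\#\mathcal K$ small for $i)$, but coarseness is in tension with $iv)$ (need the whole box to map into the target cube) and with $iiia)$ (need the dilation not too small); balancing these forces the box side to be \emph{exactly} a bounded multiple of $\log(T/\varepsilon)$ with carefully chosen offsets, and forces $c$ to be chosen as a function of $\bs m$ alone that dominates the combinatorial overshoot. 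A secondary subtlety is definability (part $ii)$, used via the Corollary): each $\tilde X_k=\xi^{-1}(\mathcal B_k\cap\pi\cap S)$ is defined by the real-analytic equation $\prod|\bs x_i|_2^{\beta_i}=\varepsilon^{\mc B}$ together with finitely many inequalities of the form $e^{a}\le|\bs x_i|_2\le e^{b}$, all of which are definable in $\mathbb R_{\exp}$; this is routine but must be noted. I expect the hyperplane/polytope bookkeeping — verifying the interval lengths and that the chosen corners lie in $\pi\cap S$ — to be the bulk of the work, with no conceptual difficulty beyond the constant-tracking described above.
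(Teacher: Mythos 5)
Your overall strategy is the paper's: pass to logarithmic coordinates via $\xi$, identify $(\partial H)_{\textup{hyp}}$ with the simplex $\pi\cap S$, tile it, and attach to each tile the translation parallel to $\pi$ carrying a reference point of the tile to $(\log\varepsilon,\dotsc,\log\varepsilon)\in\pi\cap S$ (parallelism to $\pi$ being exactly $\sum_{i}\beta_{i}a^{k}_{i}=0$). The gap is the tile size, and it is not bookkeeping but the crux. You waver between tiles of side $\asymp\log(T/\varepsilon)$ and unit tiles, and in your ``main obstacle'' paragraph you commit to the former (``making the boxes coarse \dots is what keeps $\#\mathcal{K}$ small''). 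That is backwards. The correct choice is tiles of diameter $O_{\bs{m}}(1)$: since $\pi\cap S$ is an $(M-1)$-simplex of diameter $\ll_{\bs{m},\bs{\beta}}\log(T/\varepsilon)$ (your own computation of the coordinate ranges), unit tiles already give $\#\mc{K}\ll_{\bs{m},\bs{\beta}}\log(T/\varepsilon)^{M-1}$, i.e.\ part $i)$; nothing is gained from coarser tiles. Coarse tiles, on the other hand, break $iv)$ and $iiia)$ simultaneously: a translation moves a tile rigidly, so a tile of log-side $s$ centred at the target lands in $\{z_{i}\le\log\varepsilon+O(s)\}$, and only $s=O_{\bs{m}}(1)$ can be absorbed into the constant $c$ of part $iii)$; likewise your lower bound $a^{k}_{i}\ge-\log(T/\varepsilon)-O(s)$ only yields $\exp(a^{k}_{i}-c)\gg\varepsilon/T$ when $s=O(1)$. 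The ``tension'' you identify as the main obstacle dissolves once the tile size is fixed.

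Relatedly, you do not actually close $iiia)$, and your fallback (weakening the bound to $(\varepsilon/T)^{O(1)}$) is, as you note, not permitted. The correct argument is one-sided and does not use $|z^{\star}_{i}-z^{k}_{i}|\ll\log(T/\varepsilon)$ in both directions: since the tile meets $S=(-\infty,\log T]^{M}$ and has bounded diameter, its reference point satisfies $z^{k}_{i}\le\log T+O_{\bs{m}}(1)$ for every $i$, whence $a^{k}_{i}=\log\varepsilon-z^{k}_{i}\ge\log(\varepsilon/T)-O_{\bs{m}}(1)$ and $\exp(a^{k}_{i}-c)\gg_{\bs{m}}\varepsilon/T$ with exponent exactly $1$. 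This is what the paper does, taking the reference point to be the tile's centre. With these two corrections your construction coincides with the paper's proof; the remaining items --- that $(\log\varepsilon,\dotsc,\log\varepsilon)\in\pi\cap S$ because $\varepsilon<T$, and the definability remark --- are fine.
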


\begin{proof}
First, we observe that
$$(\partial H)_{\textup{hyp}}=\xi^{-1}(\pi\cap S)$$
Let $P$ be any point on the hyperplane $\pi$ and let $\{\bs{v}_{1},\dotsc,\bs{v}_{M-1}\}$ be an orthonormal basis of $\textup{lin}(\pi)$ (i.e., the only linear subspace associated to $\pi$). We consider a tiling of $\pi$ given by the sets
$$T_{\bs{k}}:=\{P+\lambda_{1}\bs{v}_{1}+\dotsb+\lambda_{M-1}\bs{v}_{M-1}:k_{i}\leq\lambda_{i}<k_{i}+1\ \mbox{for }i=1,\dotsc,M-1\}.$$
for $\bs{k}\in\mb{Z}^{M-1}$. Since $\pi\cap S$ is bounded and $\diam(T_{\bs{k}})\ll_{\bs{m}} 1$, we trivially have
\begin{equation}
\label{eq:triviallatticeest}
\#\{\bs{k}:T_{\bs{k}}\cap\pi\cap S\neq\emptyset\}\ll_{\bs{m}} \left(2+\diam(\pi\cap S)\right)^{\mc{M}-1}.
\end{equation}
Now, the set $\pi\cap S$ is a $(M-1)$-dimensional simplex, whose vertices $V^{i}$ ($i=1,\dotsc,M$) have coordinates
$$V^{i}_{h}:=
\begin{cases}\log T & \mbox{ if }h\neq i \\
\displaystyle{\frac{1}{\beta_{i}}\log\left(\frac{\varepsilon^{\mc{B}}}{T^{\mc{B}-\beta_{i}}}\right)} & \mbox{ if } h=i
\end{cases}$$
for $h=1,\dotsc,M$. We define $V^{0}:=\left(\log T,\dotsc,\log T\right)$, and we consider the only aligned box whose vertices include the points $V^{0}, V^{1},\dotsc,V^{M}$. We let $\bs{\mu}$ be its centre. Since the side length of this box is
$$\left|\frac{1}{\beta_{i}}\log\left(\frac{\varepsilon^{\mc{B}}}{T^{\mc{B}-\beta_{i}}}\right)-\log T\right|=\frac{\mc{B}}{\beta_{i}}\log\left(\frac{T}{\varepsilon}\right),$$
and since it contains $\pi\cap S$, by (\ref{eq:triviallatticeest}) we have
\begin{equation}
\label{eq:triviallatticeest1}
\#\{\bs{k}:T_{\bs{k}}\cap\pi\cap S\neq\emptyset\}\ll_{\bs{m},\bs{\beta}}\log\left(\frac{T}{\varepsilon}\right)^{M-1}.
\end{equation}
Now, we set
$$\mc{K}_{\textup{hyp}}:=\left\{\bs{k}\in\mb{Z}^{M}:T_{\bs{k}}\cap S\neq\emptyset\right\}.$$
Note that $T_{\bs{k}}\cap\pi\cap S=T_{\bs{k}}\cap S$.
Then, the sets $\{T_{\bs{k}}\cap S\}_{\bs{k}\in\mc{K}_{\textup{hyp}}}$ form a partition of $\pi\cap S$, and part $i)$ follows directly from (\ref{eq:triviallatticeest1}). We associate with each of these sets a translation $\tau_{\bs{k}}$ of the form $\tau_{\bs{k}}(\bs{z}):=\bs{z}+\bs{a}^{\bs{k}}$, where $\bs{a}^{\bs{k}}\in\mb{R}^{M}$. In particular, we choose $\bs{a}^{\bs{k}}$ to be the distance vector from the centre of the tile $T_{\bs{k}}$ to the point $C:=\left(\log\varepsilon,\dotsc,\log\varepsilon\right)\in\pi\cap S$. Given that $\bs{a}^{\bs{k}}\in\textup{lin}(\pi)$, we have
$$\sum_{i=1}^{M}\beta_{i}a^{\bs{k}}_{i}=0,$$
proving part $iiia)$.
Now, $C$ lies in the box of vertices $V_{0},\dotsc,V_{M}$, whereas the centre of the tile $T_{\bs{k}}$ lies in a box of centre $\bs{\mu}$ and side length at most $\max_{i}\frac{\mc{B}}{\beta_{i}}\log\left(\frac{T}{\varepsilon}\right)+\diam(T_{\bs{k}})$. Since $\diam(T_{\bs{k}})\ll_{\bs{m}}1$, we have
\begin{equation}
\left|\bs{a}^{\bs{k}}\right|\ll_{\bs{m},\bs{\beta}}\log\left(\frac{T}{\varepsilon}\right).\nonumber
\end{equation}
Hence, part $iiib)$ is proved (modulo the fact that $c$ depends uniquely on $\bs{m}$). Let $\tilde{X}_{\bs{k}}:=\xi^{-1}(T_{\bs{k}}\cap S)$ for $\bs{k}\in\mc{K}_{\textup{hyp}}$ and let $\varphi_{\bs{k}}:V_{\bs{m}}\to V_{\bs{m}}$ be the linear transformation defined by
$$\varphi_{\bs{k}}(\underline{\bs{x}})_{i}:=e^{a^{\bs{k}}_{i}}\bs{x}$$
for $i=1,\dotsc,M$. The following diagram commutes
\begin{equation}
\xymatrix{
\{\bs{x}_{i}\neq 0\ i=1,\dotsc,M\} \ar[d]^{\xi} \ar[r]^{\varphi_{\bs{k}}} & \{\bs{x}_{i}\neq 0\ i=1,\dotsc,M\} \ar[d]^{\xi} \\
\mb{R}^{M} \ar[r]^{\tau_{\bs{k}}} & \mb{R}^{M}
}.\nonumber
\end{equation}
Hence,
\begin{multline}
\varphi_{\bs{k}}(X_{\bs{k}})\subset\xi^{-1}\left\{|z_{i}|\leq\log\varepsilon+\diam(T_{\bs{k}})\ \mbox{for }i=1,\dotsc,M\right\} \\
\subset\left\{|\bs{x}_{i}|_{2}\leq\varepsilon e^{\diam(T_{\bs{k}})}\ \mbox{for }i=1,\dotsc,M\right\}.\nonumber
\end{multline}
To conclude the proof of part $iv)$ it suffices to rescale all coordinates by $e^{c}\ll_{\bs{m}}1$, where $c:=\diam(T_{\bs{k}})$.
\end{proof}

\begin{lem}
\label{lem:hypdefinable}
Each of the sets $\tilde{X}_{k}$ defined in Lemma \ref{lem:toplev} is definable.  
\end{lem}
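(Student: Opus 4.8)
The plan is to show that each $\tilde{X}_{\bs{k}} = \xi^{-1}(T_{\bs{k}} \cap S)$ is definable by reducing to the definability of the sets $T_{\bs{k}} \cap S$ in $\mb{R}^M$ and the definability of the map $\xi$. First I would note that $S = (-\infty, \log T]^M$ is semi-algebraic (indeed, a product of intervals with a single real parameter $\log T$), hence definable. Next, the tile $T_{\bs{k}}$ is the image under an affine map of a half-open box $\prod_{i=1}^{M-1}[k_i, k_i+1)$; since affine maps are semi-algebraic and semi-algebraic sets are closed under images that are themselves describable by a bounded existential formula (or more simply, since $T_{\bs{k}}$ is cut out by the finitely many linear inequalities $k_i \le \langle \underline{\bs{v}}_i, \bs{z} - P\rangle < k_i + 1$ for $i = 1, \dots, M-1$ together with the defining equation of $\pi$), each $T_{\bs{k}}$ is semi-algebraic, hence definable. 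Therefore $T_{\bs{k}} \cap S$ is definable, being a finite intersection.

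The second ingredient is that the map $\xi$, given by $\xi(\underline{\bs{x}})_i = \log|\bs{x}_i|_2$, is definable on its domain $\{\underline{\bs{x}} \in V_{\bs{m}} : \bs{x}_i \neq \bs{0}\ i = 1, \dots, M\}$. Here I would invoke the fact that the structure $\mathfrak{S}$ extends $\mb{R}_{\exp}$, so in particular $\log : (0, +\infty) \to \mb{R}$ is definable, and the Euclidean norm $\underline{\bs{x}} \mapsto |\bs{x}_i|_2$ is semi-algebraic; a composition of definable maps is definable, and a finite tuple of definable maps is definable (its graph is a finite intersection of cylinders over the individual graphs). Hence $\xi$ is a definable map, and consequently the preimage $\xi^{-1}(T_{\bs{k}} \cap S)$ of a definable set under a definable map is definable: one writes $\xi^{-1}(A) = \{\underline{\bs{x}} \in \mathrm{dom}(\xi) : \exists \bs{z}\ (\underline{\bs{x}}, \bs{z}) \in \Gamma(\xi) \wedge \bs{z} \in A\}$, which is a projection of an intersection of definable sets, hence definable by properties $i), iii), iv)$ of Definition \ref{def:definablestruc}.

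The one point requiring a little care is that the domain of $\xi$ is exactly the set where all $\bs{x}_i \neq \bs{0}$, and that $(\partial H)_{\textup{hyp}} \subset \mathrm{dom}(\xi)$ — which holds because on $(\partial H)_{\textup{hyp}}$ one has $\prod_i |\bs{x}_i|_2^{\beta_i} = \varepsilon^{\mc{B}} > 0$, forcing each $|\bs{x}_i|_2 > 0$. This guarantees that the identity $(\partial H)_{\textup{hyp}} = \xi^{-1}(\pi \cap S)$ already established in the proof of Lemma \ref{lem:toplev} really does express $(\partial H)_{\textup{hyp}}$ as a preimage under a globally defined-on-its-domain definable map, and that restricting to $\tilde{X}_{\bs{k}}$ changes nothing. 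I do not expect any genuine obstacle here; the proof is a routine verification that the building blocks ($\log$, Euclidean norm, affine maps, boxes) are all definable and that definability is preserved under the relevant operations (finite intersection, composition, preimage), all of which are immediate from the axioms in Definition \ref{def:definablestruc} together with the standing assumption that $\mathfrak{S} \supseteq \mb{R}_{\exp}$. The only thing to state explicitly is that $M$, and hence the number of coordinates and inequalities involved, is fixed, so all the finiteness conditions are met.
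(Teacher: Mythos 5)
Your proof is correct and follows essentially the same route as the paper: both arguments rest on the observation that $T_{\bs{k}}\cap S$ is cut out by finitely many linear (in)equalities in $\bs{z}$ and that $\xi$ is definable in $\mb{R}_{\exp}$ because $\log$ and the Euclidean norm are. The only cosmetic difference is that the paper substitutes $z_i=\log|\bs{x}_i|_2$ directly into the linear system to obtain inequalities between generalised polynomials, whereas you invoke the general fact that preimages of definable sets under definable maps are definable via the graph-and-projection formalism; both are immediate from Definition \ref{def:definablestruc}.
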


\begin{proof}
Let $\textup{pr}_{\pi}:\mb{R}^{M}\to\pi$ be the orthogonal projection onto the hyperplane $\pi$, and let
$$\tilde{T}_{\bs{k}}:=\textup{pr}_{\pi}^{-1}\left(T_{\bs{k}}\right)$$
for $\bs{k}\in\mb{Z}^{M}$. Each of the sets $\tilde{T}_{\bs{k}}$ is semi-linear, since it is bounded by a finite number of hyperplanes. Moreover, we have
$$T_{\bs{k}}\cap S=\tilde{T}_{\bs{k}}\cap\pi\cap S.$$
It follows that all the inequalities defining the set $T_{\bs{k}}\cap S$ are semi-linear in $\bs{z}$. Let $\mf{L}(\bs{z})$ be the system defining $T_{\bs{k}}\cap S$. Then, $\tilde{X}_{\bs{k}}$ is defined by the system $\mf{L}(\xi(\underline{\bs{x}}))$, which is a system of inequalities of generalised polynomials\footnote{finite sums of monomials with non-negative real exponents. Note that the function $f(x)=x^r=\exp{(r\log x)}$ on $(0,+\infty)$ with real $r>0$ is definable in $\mb{R}_{\exp}$.} in the variables $\bs{x}_{i}$. Hence, the set $\tilde{X}_{k}$ is definable.
\end{proof}

\subsection{The non-hyperbolic part}

Now, we prove parts $i),iii)$, and $iv)$ for the set $(\partial H)_{\textup{non-hyp}}$.

\begin{lem}
\label{lem:lowerlevs}
There exists a partition of the set $(\partial H)_{\textup{non-hyp}}$ of the form $(\partial H)_{\textup{non-hyp}}= \\ \bigcup_{k\in\mc{K}_{\textup{non-hyp}}}\tilde{X}_{k}'$, and there exists a collection of linear maps $\varphi_{k}:V_{\bs{m}}\to V_{\bs{m}}$ for $k\in\mc{K}_{\textup{non-hyp}}$, that satisfy parts $i),iii)$, and $iv)$ of Proposition \ref{prop:partition}.
\end{lem}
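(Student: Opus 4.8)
The plan is to prove Lemma~\ref{lem:lowerlevs} by induction on $M$, using Proposition~\ref{prop:partition} at level $M-1$ as the inductive hypothesis. For $M=1$ one has $\partial H=(\partial H)_{\textup{hyp}}$ (because $\varepsilon<T$), so $(\partial H)_{\textup{non-hyp}}=\emptyset$ and there is nothing to prove; assume $M\geq 2$. First I would split $(\partial H)_{\textup{non-hyp}}$ according to the block of largest norm: for $i_0=1,\dots,M$ put
\[
F_{i_0}:=\left\{\underline{\bs x}\in(\partial H)_{\textup{non-hyp}}:|\bs x_{i_0}|_2=T,\ |\bs x_i|_2<T\ (i<i_0),\ |\bs x_i|_2\leq T\ (i>i_0)\right\}.
\]
Since every point of $(\partial H)_{\textup{non-hyp}}$ satisfies $\prod_i|\bs x_i|_2^{\beta_i}<\varepsilon^{\mc B}$, $\bs x_i\neq\bs 0$ for all $i$, and $\max_i|\bs x_i|_2=T$, the sets $F_{i_0}$ (taking $i_0$ minimal with $|\bs x_{i_0}|_2=T$) form a partition of $(\partial H)_{\textup{non-hyp}}$. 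The key point is that $F_{i_0}$ is essentially a product: the block $\bs x_{i_0}$ runs over the sphere of radius $T$, while $(\bs x_i)_{i\neq i_0}$ runs over the subset cut out by the conditions $|\bs x_i|_2<T$ ($i<i_0$) of
\[
H':=\left\{(\bs x_i)_{i\neq i_0}:\prod_{i\neq i_0}|\bs x_i|_2^{\beta_i}<(\varepsilon')^{\mc B'},\ |\bs x_i|_2\leq T\right\},\qquad(\varepsilon')^{\mc B'}:=\varepsilon^{\mc B}/T^{\beta_{i_0}},
\]
with $\mc B':=\mc B-\beta_{i_0}=\sum_{i\neq i_0}\beta_i$. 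This $H'$ is exactly the region of the $(M-1)$-block problem for $((m_i)_{i\neq i_0},(\beta_i)_{i\neq i_0})$ with parameters $\varepsilon',T$, and $(\varepsilon'/T)^{\mc B'}=(\varepsilon/T)^{\mc B}<(\varepsilon/T)^{\mc B'}$ forces $\varepsilon'/T<\varepsilon/T<e^{-1}$, so $T/\varepsilon'>e$ and the inductive hypothesis applies.

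Applying Proposition~\ref{prop:partition} at level $M-1$ to $H'$ I would obtain a partition $H'_+=\bigcup_{k\in\mc K'}X'_k$ with maps $\varphi'_k(\underline{\bs x}')_i=\exp(a_i^{k,\prime}-c')\bs x_i$ ($i\neq i_0$) satisfying $\#\mc K'\ll_{\bs m,\bs\beta}\log(T/\varepsilon')^{M-2}$, $\exp(a_i^{k,\prime}-c')\gg_{\bs m,\bs\beta}\varepsilon'/T$, $\sum_{i\neq i_0}\beta_i a_i^{k,\prime}=0$, and $\varphi'_k(X'_k)\subset\{|\bs x_i|_2\leq\varepsilon'\}$. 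For each $k$ I would take the piece $\{|\bs x_{i_0}|_2=T\}\times\big(X'_k\cap\{|\bs x_i|_2<T:i<i_0\}\big)$ (discarding empty ones); these partition $F_{i_0}$. The map $\varphi_k$ on $V_{\bs m}$ is built so as to contract block $i_0$ by $\varepsilon/T$ and block $i\neq i_0$ by $(\varepsilon/\varepsilon')\,e^{\delta_i}\exp(a_i^{k,\prime}-c')$, i.e.\ with a global constant $c$ fixed below,
\[
a_{i_0}^k:=c+\log(\varepsilon/T),\qquad a_i^k:=c+\log(\varepsilon/\varepsilon')-c'+a_i^{k,\prime}+\delta_i\quad(i\neq i_0),
\]
the $\delta_i\leq 0$ being constants independent of $\varepsilon,T$ still to be chosen. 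Then $iv)$ is immediate: on the piece block $i_0$ has norm $T$, mapped to norm $\varepsilon$, and for $i\neq i_0$ we get $|\varphi_k(\underline{\bs x})_i|_2\leq(\varepsilon/\varepsilon')e^{\delta_i}\varepsilon'\leq\varepsilon$. For $iiib)$ one needs $\sum_i\beta_i a_i^k=0$; using $\sum_{i\neq i_0}\beta_i a_i^{k,\prime}=0$ this becomes $\mc B c-\mc B'c'+\beta_{i_0}\log(\varepsilon/T)+\mc B'\log(\varepsilon/\varepsilon')+\sum_{i\neq i_0}\beta_i\delta_i=0$, and the key cancellation $\beta_{i_0}\log(\varepsilon/T)+\mc B'\log(\varepsilon/\varepsilon')=0$ (from $\mc B'\log\varepsilon'=\mc B\log\varepsilon-\beta_{i_0}\log T$) reduces this to $\sum_{i\neq i_0}\beta_i\delta_i=\mc B'c'-\mc B c$.

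If the global constant $c$ is chosen large enough — depending only on $\bs m,\bs\beta$, at least as large as the constant of Lemma~\ref{lem:toplev} and as all the $c'$ arising for the various $i_0$ — then $\mc B'c'-\mc B c\leq 0$, so I can take each $\delta_i=(\mc B'c'-\mc B c)/\mc B'\leq 0$, a fixed negative constant; this gives $iiib)$. For $iiia)$: $\exp(a_{i_0}^k-c)=\varepsilon/T$, while for $i\neq i_0$ we have $\exp(a_i^k-c)=(\varepsilon/\varepsilon')e^{\delta_i}\exp(a_i^{k,\prime}-c')\gg_{\bs m,\bs\beta}(\varepsilon/\varepsilon')e^{\delta_i}(\varepsilon'/T)\gg_{\bs m,\bs\beta}\varepsilon/T$ since $e^{\delta_i}$ is a fixed positive constant. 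Finally $i)$ follows by summing over the $M$ faces (with disjointly relabelled index sets): since $\log(T/\varepsilon')=(\mc B/\mc B')\log(T/\varepsilon)\ll_{\bs\beta}\log(T/\varepsilon)$ and $\log(T/\varepsilon)>1$, one gets $\#\mc K_{\textup{non-hyp}}\ll_{\bs m,\bs\beta}M\log(T/\varepsilon)^{M-2}\ll_{\bs m,\bs\beta}\log(T/\varepsilon)^{M-1}$.

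The main obstacle is the bookkeeping forced by $iiib)$: the normalisation $\sum_i\beta_i a_i^k=0$ must be reconciled with $iv)$, which forces the maximal block to be contracted by exactly $\asymp\varepsilon/T$, and with $iiia)$, which forbids over-contracting the remaining blocks. This works only because of the cancellation $\beta_{i_0}\log(\varepsilon/T)+\mc B'\log(\varepsilon/\varepsilon')=0$ and because the residual mismatch $\mc B'c'-\mc B c$ between the inductive and global normalisation constants is non-positive, hence absorbable into bounded corrections $\delta_i$ without spoiling $iiia)$. A secondary point is the logical architecture: Lemma~\ref{lem:lowerlevs} at level $M$ uses Proposition~\ref{prop:partition} at level $M-1$, so the whole of Section~\ref{sec:partitionproof} should be read as a single induction on $M$ with base case $M=1$, where both the non-hyperbolic part and the statement are vacuous.
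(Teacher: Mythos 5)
Your proof is correct, but it takes a genuinely different route from the paper. The paper never descends to a lower-dimensional problem: for each $\bs{z}\in\xi((\partial H)_{\textup{non-hyp}})$ it defines a point $\bs{z}^{*}\in\pi\cap S$ by greedily increasing the coordinates $z_{1},z_{2},\dotsc$ (each capped at $\log T$) until the hyperplane $\sum_{i}\beta_{i}z_{i}=\mc{B}\log\varepsilon$ is reached, then sets $\mc{K}_{\textup{non-hyp}}=\mc{K}_{\textup{hyp}}$ and $\tilde{X}_{k}':=\{\underline{\bs{x}}\in(\partial H)_{\textup{non-hyp}}:\xi(\underline{\bs{x}})^{*}\in T_{k}\cap S\}$, and reuses the maps $\varphi_{k}$ of Lemma \ref{lem:toplev} verbatim; parts $i)$ and $iii)$ are then immediate, and part $iv)$ follows in one line because every point of $\tilde{X}_{k}'$ is dominated coordinatewise by a point of $\tilde{X}_{k}$ and the $\varphi_{k}$ are diagonal with positive entries. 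You instead stratify $(\partial H)_{\textup{non-hyp}}$ by the first block with $|\bs{x}_{i_{0}}|_{2}=T$ and run an induction on $M$ through Proposition \ref{prop:partition} itself, with the renormalisation $(\varepsilon')^{\mc{B}'}=\varepsilon^{\mc{B}}/T^{\beta_{i_{0}}}$. I checked the two delicate points of your bookkeeping --- the cancellation $\beta_{i_{0}}\log(\varepsilon/T)+\mc{B}'\log(\varepsilon/\varepsilon')=0$ needed for $iiib)$, and the non-positivity of $\mc{B}'c'-\mc{B}c$ needed to keep the corrections $\delta_{i}$ negative without violating $iiia)$ --- and both go through. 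The trade-off: the paper's retraction is shorter, needs no restructuring, and makes $iv)$ a monotonicity observation; your induction forces all of Section \ref{sec:partitionproof} to be reorganised as an induction on $M$ (with the base case of Proposition \ref{prop:partition} at $M=1$ made explicit, not merely that of the lemma) and requires the normalisation constants $c,c'$ to be chosen coherently across levels, but in exchange it yields the sharper count $\#\mc{K}_{\textup{non-hyp}}\ll_{\bs{m},\bs{\beta}}\log(T/\varepsilon)^{M-2}$, reflecting that the non-hyperbolic boundary is lower-dimensional in the logarithmic coordinates. One presentational caveat: keep imposing the strict conditions $|\bs{x}_{i}|_{2}<T$ for $i<i_{0}$ after applying the inductive partition, as you do, so that the pieces attached to distinct $i_{0}$ stay disjoint.
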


\begin{proof}
Let $\bs{z}\in\xi((\partial H)_{\textup{non-hyp}})$. We define a unique point $\bs{z}^{*}\in\pi\cap S$ associated to $\bs{z}$ by the following procedure. By definition of $(\partial H)_{\textup{non-hyp}}$, we have
$$\sum_{i=1}^{M}\beta_{i}z_{i}<\mc{B}\log\varepsilon.$$
We increase the first coordinate $z_{1}$ of $\bs{z}$ until either $\sum_{i=1}^{M}\beta_{i}z_{i}=\mc{B}\log\varepsilon$ or $z_{1}=\log T$. We call the increased coordinate $z_{1}^{*}$. If
$$\beta_{1}z_{1}^{*}+\sum_{i=2}^{M}\beta_{i}z_{i}=\mc{B}\log\varepsilon,$$
we stop and we set $\bs{z}^{*}:=(z_{1}^{*},z_{2},\dotsc,z_{M})$. Otherwise, we increase the second coordinate $z_{2}$ until either $\beta_{1}z_{1}^{*}+\beta_{2}z_{2}+\sum_{i=3}^{M}\beta_{i}z_{i}=\mc{B}\log\varepsilon,$ or $z_{2}=\log T$. We call the increased coordinate $z_{2}^{*}$. If
$$\beta_{1}z_{1}^{*}+\beta_{2}z_{2}^{*}+\sum_{i=3}^{M}\beta_{i}z_{i}=\mc{B}\log\varepsilon,$$
we stop and we set $\bs{z}^{*}:=(z_{1}^{*},z_{2}^{*},z_{3},\dotsc,z_{M})$. Otherwise, we repeat the same steps for the remaining coordinates. This procedure terminates, since $\mc{B}\log T\geq\mc{B}\log\varepsilon$. Moreover, we have that $\bs{z}^{*}\in (\partial H)_{\textup{hyp}}$.
Now, we set $\mc{K}_{\textup{hyp}}=\mc{K}_{\textup{non-hyp}}$, and for each $k\in\mc{K}_{\textup{non-hyp}}$ we define
$$\tilde{X}_{k}':=\left\{\underline{\bs{x}}\in (\partial H)_{\textup{non-hyp}}:\xi(\underline{\bs{x}})^{*}\in\xi\left(\tilde{X}_{k}\right)=T_{k}\cap S\right\}.$$
Then, we have
$$(\partial H)_{\textup{non-hyp}}=\bigcup_{k\in\mc{K}_{\textup{hyp}}}\tilde{X}_{k}',$$
and this is a partition of $(\partial H)_{\textup{non-hyp}}$, since the sets $T_{k}\cap S$ form a partition of $\pi\cap S$. We show that the sets $\tilde{X}_{k}'$ and the maps $\varphi_{k}$ for $k\in\mc{K}_{\textup{non-hyp}}$ (i.e., the maps introduced in Lemma \ref{lem:toplev}) have the required properties. The proof of parts $i)$ and $iii)$ is trivial. To prove part $iv)$ we observe that, by construction, for each point $\underline{\bs{x}}\in\tilde{X}_{k}'$ there are points $\underline{\bs{y}}\in\tilde{X}_{k}$ such that $|\bs{x}_{i}|_{2}\leq |\bs{y}_{i}|_{2}$ for $i=1,\dotsc,M$ (e.g., any point $\underline{\bs{y}}\in\xi^{-1}\left(\xi(\underline{\bs{x}})^{*}\right)$). Therefore, since
$$\varphi_{k}\left(\tilde{X}_{k}\right)\subset\left\{|\bs{x}_{i}|_{2}\leq\varepsilon\ i=1,\dotsc,M\right\},$$
we have
$$\varphi_{k}\left(\tilde{X}_{k}'\right)\subset\left\{|\bs{x}_{i}|_{2}\leq\varepsilon\ i=1,\dotsc,M\right\},$$
by the definition of the maps $\varphi_{k}$.
\end{proof}

\begin{lem}
Each of the sets $\tilde{X}_{k}'$ defined in Lemma \ref{lem:lowerlevs} is definable.
\end{lem}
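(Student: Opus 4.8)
The plan is to reduce the definability of $\tilde{X}_{k}'$ to the \emph{semilinearity} of the ``completion map'' $*\colon\bs{z}\mapsto\bs{z}^{*}$ built in the proof of Lemma~\ref{lem:lowerlevs}, in exactly the way Lemma~\ref{lem:hypdefinable} reduced the definability of $\tilde{X}_{k}$ to the semilinearity of the tiles $T_{\bs{k}}\cap S$. By the very definition of $\tilde{X}_{k}'$ we have
$$\tilde{X}_{k}'=(\partial H)_{\textup{non-hyp}}\cap\left\{\underline{\bs{x}}\in V_{\bs{m}}:\xi(\underline{\bs{x}})^{*}\in T_{k}\cap S\right\},$$
so it is enough to check three things: that $(\partial H)_{\textup{non-hyp}}$ is definable, that $T_{k}\cap S$ is definable, and that $\underline{\bs{x}}\mapsto\xi(\underline{\bs{x}})^{*}$ is a definable map; then $\tilde{X}_{k}'$ is obtained from a definable set by intersecting with the preimage of a definable set under a definable map, hence is definable. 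The set $(\partial H)_{\textup{non-hyp}}=(\partial H)_{+}\setminus(\partial H)_{\textup{hyp}}$ is definable exactly as in Lemma~\ref{lem:hypdefinable}, being cut out by inequalities between generalised polynomials in the $\bs{x}_{i}$, and $T_{k}\cap S$ is semilinear, hence definable, since it is bounded by finitely many hyperplanes. So everything hinges on the map $*$.

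The key step is therefore to prove that $*$ is semilinear on the bounded polyhedron $P:=\{\bs{z}\in\mb{R}^{M}:z_{i}\leq\log T\ \textup{for all }i,\ \sum_{i=1}^{M}\beta_{i}z_{i}\leq\mc{B}\log\varepsilon\}$, which contains $\xi((\partial H)_{\textup{non-hyp}})$; this forces the graph of $*$ to be semilinear, hence definable, and since $\xi$ is definable in $\mb{R}_{\exp}$ (its components are the functions $\log|\bs{x}_{i}|_{2}$) the composite $\underline{\bs{x}}\mapsto\xi(\underline{\bs{x}})^{*}$ is definable. To see the semilinearity of $*$ I would unwind the cascading construction: its outcome is determined by the index $r\in\{1,\dotsc,M\}$ at which the procedure terminates, and ``the procedure terminates at $r$'' is described by finitely many linear inequalities in $\bs{z}$ (for each $j<r$, raising $z_{1},\dotsc,z_{j}$ to $\log T$ still leaves the partial $\beta$-sum strictly below $\mc{B}\log\varepsilon$; at step $r$ the hyperplane $\pi$ is reached while the pivot coordinate is still $\leq\log T$). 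These inequalities partition $P$ into finitely many sub-polyhedra $P_{1},\dotsc,P_{M}$, and on $P_{r}$ the map $*$ is affine, namely $z_{i}^{*}=\log T$ for $i<r$, $z_{i}^{*}=z_{i}$ for $i>r$, and $z_{r}^{*}=\beta_{r}^{-1}\bigl(\mc{B}\log\varepsilon-\sum_{i<r}\beta_{i}\log T-\sum_{i>r}\beta_{i}z_{i}\bigr)$. The graph of $*$ is then the union over $r$ of the graphs of these affine restrictions, a finite union of bounded polyhedra.

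I expect the main obstacle to be precisely this analysis of the completion map: one must verify that the ``which coordinate is the pivot'' branching is controlled \emph{only} by linear data, and that the boundary cases (partial $\beta$-sum equal to $\mc{B}\log\varepsilon$ exactly, or pivot coordinate equal to $\log T$) are assigned to the pieces $P_{r}$ consistently, so that $*$ is genuinely single-valued and the $P_{r}$ exhaust $P$ without problematic overlaps — in other words, that the greedy recipe for $\bs{z}^{*}$ coincides with a bona fide piecewise-affine map. Once that is in hand, the remaining points (definability of $(\partial H)_{\textup{non-hyp}}$ and of $T_{k}\cap S$, and the passage to $\tilde{X}_{k}'$) are routine and parallel the proof of Lemma~\ref{lem:hypdefinable}.
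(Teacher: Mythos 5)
Your proposal is correct, but it takes a genuinely different route from the paper. The paper never analyses the completion map $\bs{z}\mapsto\bs{z}^{*}$ at all: it writes down a first-order description of the \emph{relation} between $\underline{\bs{x}}\in(\partial H)_{\textup{non-hyp}}$ and a point $\underline{\bs{x}}^{*}\in\tilde{X}_{k}$ (namely $|\bs{x}_{i}^{*}|\geq|\bs{x}_{i}|$ for all $i$, and $|\bs{x}_{i}^{*}|>|\bs{x}_{i}|$ forces $|\bs{x}_{h}^{*}|=T$ for all $h<i$), packages this as a definable subset $\tilde{X}_{k}''\subset(\partial H)_{\textup{non-hyp}}\times\tilde{X}_{k}$, and obtains $\tilde{X}_{k}'$ as its projection onto the first factor, invoking axiom $iv)$ of Definition \ref{def:definablestruc}. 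You instead prove that the completion map itself is semilinear — piecewise affine on polyhedral pieces $P_{r}$ indexed by the pivot coordinate — so that $\underline{\bs{x}}\mapsto\xi(\underline{\bs{x}})^{*}$ is definable and $\tilde{X}_{k}'$ is the intersection of $(\partial H)_{\textup{non-hyp}}$ with a preimage of the semilinear set $T_{k}\cap S$. Both arguments are sound; your case analysis of the greedy procedure is accurate (the branching conditions are linear inequalities in $\bs{z}$, the affine formulas agree on the overlaps of the $P_{r}$, so the map is single-valued and piecewise affine). What the paper's route buys is brevity and robustness: by quantifying existentially over $\underline{\bs{x}}^{*}$ it only needs to \emph{characterise} the output of the procedure, not compute it, and so avoids the bookkeeping of pivots and boundary cases that you correctly identify as the delicate point of your approach. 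What your route buys is an explicit formula for the completion map, which is more information than the lemma requires but could be reused elsewhere.
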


\begin{proof}
We have
\begin{multline}
\tilde{X}_{k}'=\left\{\underline{\bs{x}}\in (\partial H)_{\textup{non-hyp}}:\exists\underline{\bs{x}}^{*}\in\tilde{X}_{k}\ \mbox{such that }|\bs{x}_{i}^{*}|\geq|\bs{x}_{i}|\right. \\ 
\left.\mbox{and }|\bs{x}_{i}^{*}|>|\bs{x}_{i}|\Rightarrow(|\bs{x}_{h}^{*}|=T\ \mbox{for }h<i)\right\}.\nonumber
\end{multline}
Now, we consider the set
\begin{multline}
\tilde{X}_{k}'':=\left\{(\underline{\bs{x}},\underline{\bs{x}}^{*})\in (\partial H)_{\textup{non-hyp}}\times \tilde{X}_{k}:|\bs{x}_{i}^{*}|\geq|\bs{x}_{i}|\ \mbox{and }|\bs{x}_{i}^{*}|>|\bs{x}_{i}|\Rightarrow(|\bs{x}_{h}^{*}|=T\ \mbox{for }h<i)\right\},\nonumber
\end{multline}
and we let $\textup{pr}:V_{\bs{m}}\times V_{\bs{m}}\to V_{\bs{m}}$ be the projection onto the first cartesian factor. Then, $\tilde{X}_{k}'=\textup{pr}\left(\tilde{X}_{k}''\right).$ By the properties of $o$-minimal structures (see Definition \ref{def:definablestruc}), $\textup{pr}\left(\tilde{X}_{k}''\right)$ is a definable set.
\end{proof}

\subsection{From the boundary to the whole set}
Given a set $A\subset V_{\bs{m}}$, we denote by $\mc{C}(A)$ the cone generated by the set $A$, i.e., the set
$$\{t\underline{\bs{x}}:t\in(0,+\infty),\underline{\bs{x}}\in A\}.$$
Let $\mc{K}:=\mc{K}_{\textup{hyp}}\sqcup\mc{K}_{\textup{non-hyp}}$,
and let
$$X_{k}:=\mc{C}\left(\tilde{X}_{k}\right)\cap H_{+}$$
for $k\in\mc{K}$ (where we drop the apex $'$ for the sets $\tilde{X}_{k}'$ with $k\in\mc{K}_{\textup{non-hyp}}$). Then, clearly
$$H_{+}=\bigcup_{k\in\mc{K}}X_{k},$$
and this is a partition of the set $H_{+}$ (each line through the origin intersects the boundary at at most one point). We prove that the sets $X_{k}$ and the maps $\varphi_{k}$ satisfy parts $i)-iv)$ of Proposition \ref{prop:partition}. From Lemmas \ref{lem:toplev} and \ref{lem:lowerlevs}, we easily deduce
$$\#\mc{K}\ll_{\bs{m},\bs{\beta}}\log\left(\frac{T}{\varepsilon}\right)^{M-1}.$$ 
To prove part $ii)$, we need the following lemma.

\begin{lem}
\label{lem:cones}
Let $D\subset V_{\bs{m}}$ be a definable set. Then, the set $\mc{C}(D)\subset V_{\bs{m}}$ is also definable.
\end{lem}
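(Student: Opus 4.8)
The idea is to exhibit $\mc{C}(D)$ as the image of a definable set under a projection, using only the closure properties of o-minimal structures listed in Definition \ref{def:definablestruc} (in particular axioms $i)$--$iv)$), together with the fact that scalar multiplication is a polynomial operation.

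First I would record that the ``scaling graph''
$$G:=\left\{(\underline{\bs{y}},t,\underline{\bs{x}})\in V_{\bs{m}}\times\mb{R}\times V_{\bs{m}}:\bs{y}_{i}=t\bs{x}_{i}\ i=1,\dotsc,M\right\}$$
is cut out (coordinatewise) by the polynomial equations $\bs{y}_{i}=t\bs{x}_{i}$, hence is semi-algebraic and therefore definable by axiom $ii)$. Likewise $(0,+\infty)\subset\mb{R}$ and $V_{\bs{m}}=\mb{R}^{\mc{M}}$ are semi-algebraic, hence definable.

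Next, since $D$ is definable by hypothesis, the product $V_{\bs{m}}\times(0,+\infty)\times D$ is definable by axiom $iii)$, and consequently
$$E:=G\cap\bigl(V_{\bs{m}}\times(0,+\infty)\times D\bigr)=\left\{(\underline{\bs{y}},t,\underline{\bs{x}}):t>0,\ \underline{\bs{x}}\in D,\ \underline{\bs{y}}=t\underline{\bs{x}}\right\}$$
is definable by axiom $i)$. Finally, $\mc{C}(D)$ is exactly the set of those $\underline{\bs{y}}$ for which some such triple exists, i.e.\ the projection of $E\subset V_{\bs{m}}\times\bigl(\mb{R}\times V_{\bs{m}}\bigr)$ onto the first $\mc{M}$ coordinates; this is definable by axiom $iv)$. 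The only point requiring a little care is arranging the coordinate order so that the projection axiom applies to the first block of coordinates, which is why I placed the $\underline{\bs{y}}$-variables first above; beyond this bookkeeping there is no real obstacle, as the argument is a direct application of the structure axioms.
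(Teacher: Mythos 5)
Your proof is correct and follows essentially the same route as the paper: both realise $\mc{C}(D)$ as the projection of a definable subset of a product space encoding the relation $\underline{\bs{y}}=t\underline{\bs{x}}$, $t>0$, $\underline{\bs{x}}\in D$. Your version merely makes the intermediate set more explicit by introducing the semi-algebraic scaling graph, whereas the paper works directly with $\{(\underline{\bs{x}},t):t\underline{\bs{x}}\in D,\ t>0\}$.
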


\begin{proof}
We have
$$\mc{C}(D)=\left\{\underline{\bs{x}}\in V_{\bs{m}}: \exists t\in(0,+\infty)\ \mbox{such that }t\underline{\bs{x}}\in D\right\}.$$
We consider the set
$$\tilde{D}=\{(\underline{\bs{x}},t)\in V_{\bs{m}}\times\mb{R}:t\underline{\bs{x}}\in D,t>0\},$$
and we let $\textup{pr}:V_{\bs{m}}\times\mb{R}\to V_{\bs{m}}$ be the natural projection.
Then,
$$\mc{C}(D)=\textup{pr}\left(\tilde{D}\right).$$
By the properties of $o$-minimal structures (see Definition \ref{def:definablestruc}), $\textup{pr}\left(\tilde{D}\right)$ is a definable set.
\end{proof}

From Lemmas \ref{lem:hypdefinable} and \ref{lem:cones} it follows that $\mc{C}(\tilde{X}_{k})$ is a definable set for each $k$, proving part $ii)$. Part $iii)$ is a straightforward consequence of Lemmas \ref{lem:toplev} and \ref{lem:lowerlevs}. To prove part $iv)$, it suffices to note that for each point $\underline{\bs{x}}\in X_{k}$ there is a point $\underline{\bs{y}}\in\tilde{X}_{k}$ or $\underline{\bs{y}}\in\tilde{X}_{k}'$ such that $|\bs{x}_{i}|_{2}\leq|\bs{y}_{i}|_{2}$ for $i=1,\dotsc,M$ (namely $\{\underline{\bs{y}}\}=\{t\underline{\bs{x}}:t\in(0,+\infty)\}\cap(\partial H)_{+}$). Hence, part $iv)$ follows again from Lemmas \ref{lem:toplev} and \ref{lem:lowerlevs}, and by the definition of the maps $\varphi_{k}$.

\section{Proof of Proposition \ref{prop:firstmin}}
\label{sec:minimaproof}

Let $\underline{\bs{v}}\neq\bs{0}$ be a shortest vector in the lattice $\omega_{2}\circ\omega_{1}\circ\hat{\varphi}_{k}(\Lambda)$. Then, $\bs{v}$ has the form
\begin{equation}
\underline{\bs{v}}=\left(\theta\exp\left(a^{k}_{1}-c\right)\bs{x}_{1},\dotsc,\theta\exp\left(a^{k}_{M}-c\right)\bs{x}_{M},
\theta^{-\frac{\mc{B}}{\mc{C}}}\frac{Q}{Q_{1}}\bs{y}_{1},\dotsc,\theta^{-\frac{\mc{B}}{\mc{C}}}\frac{Q}{Q_{N}}\bs{y}_{N}\right)\nonumber
\end{equation}
for some point $(\underline{\bs{x}},\underline{\bs{y}})\in\Lambda$. It follows that
\begin{multline}
\label{eq:length}
|\underline{\bs{v}}|_{2}=\Bigg(\theta^{2}\exp\left(2a^{k}_{1}-2c\right)|\bs{x}_{1}|_{2}^{2}+\dotsb+\theta^{2}\exp\left(2a^{k}_{M}-2c\right)|\bs{x}_{M}|_{2}^{2}+ \\
\theta^{-\frac{2\mc{B}}{\mc{C}}}\frac{Q^{2}}{Q_{1}^{2}}|\bs{y}_{1}|_{2}^{2}+\dotsb+\theta^{-\frac{2\mc{B}}{\mc{C}}}\frac{Q^{2}}{Q_{N}^{2}}|\bs{y}_{N}|_{2}^{2}\Bigg)^{\frac{1}{2}}.
\end{multline}
Fix $r>0$. We consider three cases. Case $1$:
\begin{itemize}
\item $\bs{x}_{i}\neq\bs{0}$ for $i=1,\dotsc,M$ and $\bs{y}_{j}\neq\bs{0}$ for $j=1,\dotsc,N$;
\item $|(\underline{\bs{x}},\underline{\bs{y}})|_{2}< r$.
\end{itemize}
By applying the weighted arithmetic-geometric mean inequality to (\ref{eq:length}), with weights $\beta_{1},\dotsc,\beta_{M}$ and $\gamma_{1},\dotsc,\gamma_{N}$, we get
$$|\underline{\bs{v}}|_{2}\gg_{\bs{m},\bs{\beta},\bs{\gamma}}\left(\text{Nm}_{(\bs{\beta},\bs{\gamma})}(\underline{\bs{x}},\underline{\bs{y}})\right)^{\frac{1}{\mc{B}+\mc{C}}}\geq\nu(\Lambda,r),$$
where we used the fact that $\sum_{i=1}^{M}\beta_{i}a^{k}_{i}=0$ (see Proposition \ref{prop:partition}, part $iiib)$).

\noindent Case $2$:
\begin{itemize}
\item $|(\underline{\bs{x}},\underline{\bs{y}})|_{2}\geq r$.
\end{itemize}
In this case it must be either $|\bs{x}_{i_{0}}|_{2}\geq r/\sqrt{M+N}$ for some $1\leq i_{0}\leq M$ or $|\bs{y}_{j_{0}}|_{2}\geq r/\sqrt{M+N}$ for some $1\leq j_{0}\leq N$.

\noindent Case $2a$:
\begin{itemize}
\item there exists $1\leq i_{0}\leq M$ such that $|\bs{x}_{i_{0}}|_{2}\geq r/\sqrt{M+N}$.
\end{itemize}
By ignoring all the terms but $\bs{x}_{i_{0}}$, we get
$$|\underline{\bs{v}}|_{2}\gg_{\bs{m}}\theta e^{a^{\bs{k}}_{i_{0}}}|\bs{x}_{i_{0}}|_{2}\gg_{\bs{m},\bs{n}}\theta e^{a^{\bs{k}}_{i_{0}}}r.$$
It follows from Proposition \ref{prop:partition} part $iiia)$ that
$$|\underline{\bs{v}}|_{2}\gg_{\bs{m},\bs{n},\bs{\beta}}\frac{\left(\varepsilon^{\mc{B}} Q^{\mc{C}}\right)^{\frac{1}{\mc{B}+\mc{C}}}}{\varepsilon}\frac{\varepsilon}{T}r=\left(\varepsilon^{\mc{B}} Q^{\mc{C}}\right)^{\frac{1}{\mc{B}+\mc{C}}}\frac{r}{T}\geq\left(\varepsilon^{\mc{B}} Q^{\mc{C}}\right)^{\frac{1}{\mc{B}+\mc{C}}}\frac{r}{\diam\,Z}.$$
Case $2b$:
\begin{itemize}
\item there exists $1\leq j_{0}\leq N$ such that $|\bs{y}_{j_{0}}|_{2}\geq r/\sqrt{M+N}$.
\end{itemize}
By ignoring all the terms but $\bs{y}_{j_{0}}$, we get
$$|\underline{\bs{v}}|_{2}\gg_{\bs{m},\bs{n},\bs{\beta}}\theta^{-\frac{\mc{B}}{\mc{C}}}\frac{Q}{Q_{j_{0}}}r\geq\left(\varepsilon^{\mc{B}} Q^{\mc{C}}\right)^{\frac{1}{\mc{B}+\mc{C}}}\frac{r}{Q_{\max}}\geq\left(\varepsilon^{\mc{B}} Q^{\mc{C}}\right)^{\frac{1}{\mc{B}+\mc{C}}}\frac{r}{\diam\,Z},$$
where $Q_{\max}:=\max\{Q_{j}:j=1,\dotsc,N\}$.

\noindent Case $3$:
\begin{itemize}
\item $\bs{x}_{i_{0}}=\bs{0}$ for some $1\leq i_{0}\leq M$ or $\bs{y}_{j_{0}}=\bs{0}$ for some $1\leq j_{0}\leq N$.
\end{itemize}
We can suppose $C\neq\{(\bs{0},\bs{0})\}$, otherwise this case does not occur. Since $\Lambda$ is weakly admissible for $(S,C)$ we have that $(\underline{\bs{x}},\underline{\bs{y}})\in C$. Now, let
$$\delta_{\underline{\bs{x}}}:=
\begin{cases}
+\infty & \quad\text{ if }I=\{1,\dotsc,M\} \\
1 & \quad\text{ otherwise}
\end{cases},$$
$$\delta_{\underline{\bs{y}}}:=
\begin{cases}
+\infty & \quad\text{ if }J=\{1,\dotsc,N\} \\
1 & \quad\text{ otherwise}
\end{cases},$$
and let $Q_{C,\max}:=\max\left\{Q_{j}:j\notin J\right\}$ (if $J=\{1,\dotsc,N\}$, we set $Q_{C,\max}:=1$). Then, by Proposition \ref{prop:partition} part $iiia)$, we have
\begin{align}
 |\underline{\bs{v}}|_{2} & \geq\min\left\{\delta_{\underline{\bs{x}}}\theta\min_{i}{\exp\left(a^{\bs{k}}_{i}-c\right)},\ \delta_{\underline{\bs{y}}}\theta^{-\frac{\mc{B}}{\mc{C}}}\frac{Q}{Q_{C, \max}}\right\}|(\underline{\bs{x}},\underline{\bs{y}})|_{2}\nonumber \\
 & \gg_{\bs{m},\bs{\beta}}\min\left\{\delta_{\underline{\bs{x}}}\frac{\left(\varepsilon^{\mc{B}} Q^{\mc{C}}\right)^{\frac{1}{\mc{B}+\mc{C}}}}{\varepsilon}\frac{\varepsilon}{T},\ \delta_{\underline{\bs{y}}}\frac{\left(\varepsilon^{\mc{B}} Q^{\mc{C}}\right)^{\frac{1}{\mc{B}+\mc{C}}}}{Q_{C,\max}}\right\}\lambda_{1}(\Lambda\cap C)\nonumber \\
 & \geq\left(\varepsilon^{\mc{B}} Q^{\mc{C}}\right)^{\frac{1}{\mc{B}+\mc{C}}}\frac{\lambda_{1}(\Lambda\cap C)}{\diam(Z\cap C)}.\nonumber
\end{align}
This concludes the proof.

\section{Proof of Proposition \ref{cor:cor1}}

The set $Z$ that we consider in Proposition \ref{cor:cor1} has a slightly different structure from the fibres of the family $\mc{Z}$ appearing in Theorem \ref{thm:mainestimate}. In particular, it involves the maximum norm $|\cdot|_{\infty}$ instead of the Euclidean norm $|\cdot|_{2}$. Therefore, in order to apply Theorem \ref{thm:mainestimate} to the set $Z$, we need to introduce a new family $\mc{W}$ and see $Z$ as a fibre of $\mc{W}$. Let $\bs{m}=\bs{\beta}:=(1,\dotsc,1)\in\mb{R}^{\mc{M}}$ and let $\bs{n}=\bs{\gamma}:=\mc{N}$ (which implies $M=\mc{M}$ and $N=1$ according to the notation described in the Introduction). We set $\mc{W}:=\mc{H}\times\mc{R}^{\infty}$, where
$$\mc{H}:=\left\{(\underline{\bs{x}},\varepsilon',T')\in V_{\bs{m}}\times(0,+\infty)^{2}:\textup{Nm}_{\bs{m}}(\underline{\bs{x}})^{\frac{1}{\mc{M}}}<\varepsilon',\ |x_{i}|\leq T'\ i=1,\dotsc,M\right\},$$
and 
$$\mc{R}^{\infty}:=\left\{(\bs{y},Q')\in V_{\bs{n}}\times\mb{R}:|\bs{y}|_{\infty}\leq Q'\right\}$$
(note that the definition of $\mc{H}$ hasn't changed). Then, $Z=W_{\bs{\tau}}$, where
$$\bs{\tau}:=\left(\varepsilon',T',Q'\right)=\left(\varepsilon^{\frac{1}{\mc{M}}},T,Q\right).$$
To prove proposition \ref{cor:cor1}, we need to estimate
\begin{equation}
\label{eq:intersection}
\#\left(M(\bs{L},\varepsilon,T,Q)\right)=\#\left((\Lambda_{\bs{L}}\cap W_{\bs{\tau}})\setminus C\right),
\end{equation}
where $C:=\left\{\bs{y}=\bs{0}\right\}\subset V$. We consider two different cases. First, we assume
\begin{equation}
\label{eq:case1}
\varepsilon Q^{\mc{N}}/\phi(Q)\geq 1.
\end{equation}
In this case, we use Theorem \ref{thm:mainestimate} to estimate $\#(\Lambda_{\bs{L}}\cap W_{\bs{\tau}})$. A suitable choice for the parameter $\bs{\eta}(\bs{\tau})$ in order to have $W_{\bs{\tau}}\subset Z_{\bs{\eta}(\bs{\tau})}$ is $\bs{\eta}(\bs{\tau})=\left(\varepsilon^{\frac{1}{\mc{M}}},T,\sqrt{\mc{N}}Q\right)$. Also, we need to show that the lattice $\Lambda_{\bs{L}}$ is weakly admissible for the couple $(\mc{S},C)$, where $\mc{S}:=((\bs{m},\bs{n}),(\bs{\beta},\bs{\gamma}))$. We do this in the following lemma.
\begin{lem}
\label{lem:weakadmcor}
Let $\bs{m}=\bs{\beta}:=(1,\dotsc,1)\in\mb{R}^{\mc{M}}$ and let $\bs{n}=\bs{\gamma}:=\mc{N}$. Let also $\mc{S}:=((\bs{m},\bs{n}),(\bs{\beta},\bs{\gamma}))$ and let $C:=\{\bs{y}=\bs{0}\}$. Then,
\begin{equation}
\label{eq:weakadmcor}
\nu(\Lambda_{\bs{L}},\varrho)\geq\phi(\varrho)^{\frac{1}{\mc{M}+\mc{N}}}
\end{equation}
for all $\varrho>0$. Therefore, the lattice $\Lambda_{\bs{L}}$ is weakly admissible for the couple $(\mc{S},C)$ (see Definition \ref{def:weakadmin}).
\end{lem}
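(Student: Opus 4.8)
The goal is to unwind the definitions of $\nu$ and of weak admissibility and check that the arithmetic $\phi$-semi multiplicative badly approximable condition on $\bs{L}$ (Definition \ref{def:semimultbadapp0}) translates into the geometric lower bound (\ref{eq:weakadmcor}). Recall that with the chosen data $\bs{m}=\bs{\beta}=(1,\dotsc,1)\in\mb{R}^{\mc{M}}$ and $\bs{n}=\bs{\gamma}=\mc{N}$, we have $\mc{B}=\mc{M}$, $\mc{C}=\mc{N}$, and $\mc{A}=\mc{B}+\mc{C}=\mc{M}+\mc{N}$, while the multiplicative norm on $V=\mb{R}^{\mc{M}}\times\mb{R}^{\mc{N}}$ reads $\textup{Nm}_{(\bs{\beta},\bs{\gamma})}(\bs{x},\bs{y})=\left(\prod_{i=1}^{\mc{M}}|x_{i}|\right)|\bs{y}|_{2}^{\mc{N}}$.

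First I would take an arbitrary nonzero vector $(\bs{x},\bs{y})\in\Lambda_{\bs{L}}\setminus C$ with $|(\bs{x},\bs{y})|_{2}<\varrho$; by definition of $\Lambda_{\bs{L}}=\bs{A}_{\bs{L}}\mb{Z}^{\mc{M}+\mc{N}}$, such a vector has the form $x_{i}=L_{i}(\bs{q})+p_{i}$ for $i=1,\dotsc,\mc{M}$ and $\bs{y}=\bs{q}$, for some $\bs{p}\in\mb{Z}^{\mc{M}}$ and $\bs{q}\in\mb{Z}^{\mc{N}}$. Since $(\bs{x},\bs{y})\notin C$ we have $\bs{q}\neq\bs{0}$, so the linear independence hypothesis on $1,L_{i1},\dotsc,L_{i\mc{N}}$ guarantees $L_{i}(\bs{q})+p_{i}\neq 0$ for every $i$, hence $|x_i|\ge\|L_i(\bs q)\|>0$. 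Now I would bound $\textup{Nm}_{(\bs{\beta},\bs{\gamma})}(\bs{x},\bs{y})=\prod_{i}|L_{i}(\bs{q})+p_{i}|\cdot|\bs{q}|_{2}^{\mc{N}}\ge\prod_{i}\|L_{i}(\bs{q})\|\cdot|\bs{q}|_{\infty}^{\mc{N}}$, using $|\bs q|_2\ge|\bs q|_\infty$. By Definition \ref{def:semimultbadapp0} this is $\ge\phi(|\bs{q}|_{\infty})$. Finally, since $|\bs{q}|_{\infty}=|\bs{y}|_{\infty}\le|(\bs{x},\bs{y})|_{2}<\varrho$ and $\phi$ is non-increasing, we get $\phi(|\bs q|_\infty)\ge\phi(\varrho)$. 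Taking $(\mc{M}+\mc{N})$-th roots and then the infimum over all admissible $(\bs{x},\bs{y})$ yields $\nu(\Lambda_{\bs{L}},\varrho)\ge\phi(\varrho)^{1/(\mc{M}+\mc{N})}>0$, which establishes both (\ref{eq:weakadmcor}) and — since $\varrho>0$ is arbitrary — weak admissibility of $\Lambda_{\bs{L}}$ for $(\mc{S},C)$ in the sense of Definition \ref{def:weakadmin}.

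There is no serious obstacle here; the only points requiring a little care are (i) correctly identifying which coordinate subspace $C$ is and checking that the linear-independence assumption rules out the degenerate vanishing $L_i(\bs q)+p_i=0$, so that a point of $\Lambda_{\bs L}$ lies in $C$ precisely when $\bs q=\bs 0$ (this is exactly the geometric condition discussed before Definition \ref{def:semimultbadapp0}); and (ii) being consistent about the direction of the inequality $\|L_i(\bs q)\|\le|L_i(\bs q)+p_i|$, which holds because $p_i\in\mb Z$. The passage from $|\bs q|_2$ to $|\bs q|_\infty$ only helps, as $\phi\le1$ and the exponent is positive. Also worth noting: the normalization $\bs n=\bs\gamma=\mc N$ (rather than $1$) is precisely what makes $|\bs y|_2^{\mc N}$ appear with the right power so that $|\bs q|_\infty^{\mc N}$ from Definition \ref{def:semimultbadapp0} matches.
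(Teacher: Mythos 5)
Your proposal is correct and follows essentially the same route as the paper: write a short vector of $\Lambda_{\bs L}\setminus C$ as $(L_1(\bs q)+p_1,\dotsc,L_{\mc M}(\bs q)+p_{\mc M},\bs q)$ with $\bs q\neq\bs 0$, bound $\textup{Nm}_{(\bs\beta,\bs\gamma)}$ below via $|L_i(\bs q)+p_i|\geq\|L_i(\bs q)\|$ and $|\bs q|_2\geq|\bs q|_\infty$, invoke Definition \ref{def:semimultbadapp0}, and use monotonicity of $\phi$ together with $|\bs q|_\infty<\varrho$. The only cosmetic difference is that the paper disposes of the empty-infimum case ($\varrho\leq\lambda_1(\Lambda_{\bs L}\setminus C)$) explicitly, which you handle implicitly via the convention $\inf\emptyset=+\infty$.
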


\begin{proof}
Let $\varrho\in(0,+\infty)$. If $\varrho\leq\lambda_{1}(\Lambda_{\bs{L}}\setminus C)$, then $\nu(\Lambda_{\bs{L}},\varrho)=+\infty$ and (\ref{eq:weakadmcor}) holds true. We can thus suppose that $\varrho>\lambda_{1}(\Lambda_{\bs{L}}\setminus C)$. Let $\bs{v}\in\Lambda_{\bs{L}}\setminus C$ with $|\bs{v}|_{2}<\varrho$. Then,
$$\bs{v}=(L_{1}(\bs{q})+p_{1},\dotsc,L_{\mc{M}}(\bs{q})+p_{\mc{M}},\bs{q})$$
for some $\bs{p}\in\mb{Z}^{\mc{M}}$ and $\bs{q}\in\mb{Z}^{\mc{N}}\setminus\{\bs{0}\}$. It follows from the hypothesis that
$$\textup{Nm}_{\bs{\beta},\bs{\gamma}}(\bs{v})=|\bs{q}|_{2}^{\mc{N}}\prod_{i=1}^{\mc{M}}\left|L_{i}(\bs{q})+p_{i}\right|\geq|\bs{q}|_{\infty}^{\mc{N}}\prod_{i=1}^{\mc{M}}\left\|L_{i}(\bs{q})\right\|\geq\phi(|\bs{q}|_{\infty})\geq\phi(\varrho),$$
where we used the fact that $\phi$ is non-increasing. Hence, $\nu(\Lambda_{\bs{L}},\varrho)\geq\phi(\varrho)^{1/(\mc{M}+\mc{N})}$.
\end{proof}

By applying Theorem \ref{thm:mainestimate} to $W_{\bs{\tau}}\subset Z_{\bs{\eta}(\bs{\tau})}$, we find

\begin{multline}
\label{eq:cor1eq1}
\left|\#(\Lambda_{\bs{L}}\cap Z)-\Vol\,Z\right|\ll_{\mc{M},\mc{N}} \\
\inf_{0<r\leq\diam\left(Z_{\bs{\eta}(\bs{\tau})}\right)}\log\left(\frac{T^{\mc{M}}}{\varepsilon}\right)^{\mc{M}-1}\left(\left(\frac{\varepsilon Q^{\mc{N}}}{\phi(r)}\right)^{\frac{1}{\mc{M}+\mc{N}}}+\frac{T+Q}{r}+\frac{T}{\lambda_{1}(\Lambda_{\bs{L}}\cap C)}\right)^{\mc{M}+\mc{N}-1}.
\end{multline}
Now, since $\Lambda_{\bs{L}}\cap C=\mb{Z}^{\mc{M}}\times\{\bs{0}\}$, we have $\lambda_{1}(\Lambda_{\bs{L}}\cap C)=1$. Hence, by choosing $r=Q$ in (\ref{eq:cor1eq1}), we deduce 
\begin{equation}
\label{eq:cor1eq2}
\left|\#(\Lambda_{\bs{L}}\cap Z)-\Vol\,Z\right|\ll_{\mc{M},\mc{N}}\log\left(\frac{T^{\mc{M}}}{\varepsilon}\right)^{\mc{M}-1}\left(\left(\frac{\varepsilon Q^{\mc{N}}}{\phi(Q)}\right)^{\frac{1}{\mc{M}+\mc{N}}}+1+T\right)^{\mc{M}+\mc{N}-1}.
\end{equation}
An easy integration shows that
$$\Vol\,Z=2^{\mc{M}+\mc{N}} Q^{\mc{N}}\left[\varepsilon\log\left(\frac{T^{\mc{M}}}{\varepsilon}\right)^{\mc{M}-1}+T^{\mc{M}}\left(1-\left(1-\frac{\varepsilon}{T^{\mc{M}}}\right)^{\mc{M}-1}\right)\right].$$
Thus, (\ref{eq:intersection}) and (\ref{eq:cor1eq2}) imply
\begin{multline}
\label{eq:cor1eq3}
\left|\#M(\bs{L},\varepsilon,T,Q)-2^{\mc{M}+\mc{N}} Q^{\mc{N}}\left[\varepsilon\log\left(\frac{T^{\mc{M}}}{\varepsilon}\right)^{\mc{M}-1}+T^{\mc{M}}\left(1-\left(1-\frac{\varepsilon}{T^{\mc{M}}}\right)^{\mc{M}-1}\right)\right]\right|\\
\ll_{\mc{M},\mc{N}}\#(\Lambda_{\bs{L}}\cap Z\cap C)+\log\left(\frac{T^{\mc{M}}}{\varepsilon}\right)^{\mc{M}-1}\left(\left(\frac{\varepsilon Q^{\mc{N}}}{\phi(Q)}\right)^{\frac{1}{\mc{M}+\mc{N}}}+1+T\right)^{\mc{M}+\mc{N}-1}.
\end{multline}
Since $\#(\Lambda_{\bs{L}}\cap Z\cap C)\leq (2T+1)^{\mc{M}}$, the required estimate is a straightforward consequence of (\ref{eq:case1}) and (\ref{eq:cor1eq3}).

We are now left to prove the claim for $\varepsilon Q^{\mc{N}}/\phi(Q)<1$.

\begin{lem}
\label{lem:emptycase}
Suppose that $\varepsilon Q^{\mc{N}}/\phi(Q)<1$. Then, $\Lambda_{\bs{L}}\cap Z\subset C$.
\end{lem}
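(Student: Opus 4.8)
The plan is to argue by contradiction. Assuming $\varepsilon Q^{\mc{N}}/\phi(Q) < 1$, I will exhibit that no point $(\underline{\bs{x}},\bs{q}) \in \Lambda_{\bs{L}} \cap Z$ can have $\bs{q} \neq \bs{0}$; since $C = \{\bs{y} = \bs{0}\}$, this is exactly the inclusion $\Lambda_{\bs{L}} \cap Z \subset C$.

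First I would write an arbitrary element of $\Lambda_{\bs{L}} = \bs{A}_{\bs{L}}\mb{Z}^{\mc{M}+\mc{N}}$ in the form $(L_{1}(\bs{q})+p_{1},\dotsc,L_{\mc{M}}(\bs{q})+p_{\mc{M}},\bs{q})$ with $\bs{p}\in\mb{Z}^{\mc{M}}$ and $\bs{q}\in\mb{Z}^{\mc{N}}$, and suppose this point lies in $Z$ with $\bs{q}\neq\bs{0}$. The box factor $[-Q,Q]^{\mc{N}}$ of $Z$ then forces $|\bs{q}|_{\infty}\leq Q$, while the hyperbolic factor forces $\prod_{i=1}^{\mc{M}}|L_{i}(\bs{q})+p_{i}| < \varepsilon$. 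The elementary observation is that $|L_{i}(\bs{q})+p_{i}|\geq\|L_{i}(\bs{q})\|$ for every $i$, because $p_{i}\in\mb{Z}$ and $\|\cdot\|$ is the distance to the nearest integer.

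Next I would feed this into the $\phi$-semi multiplicative bad approximability of $\bs{L}$. Since $\bs{q}\neq\bs{0}$ we have $|\bs{q}|_{\infty}\geq 1$, and since $Q\geq 1$ and $\phi$ is non-increasing, Definition \ref{def:semimultbadapp0} gives $\prod_{i=1}^{\mc{M}}\|L_{i}(\bs{q})\| \geq \phi(|\bs{q}|_{\infty})/|\bs{q}|_{\infty}^{\mc{N}} \geq \phi(Q)/Q^{\mc{N}}$. Chaining the inequalities yields $\varepsilon > \prod_{i=1}^{\mc{M}}|L_{i}(\bs{q})+p_{i}| \geq \prod_{i=1}^{\mc{M}}\|L_{i}(\bs{q})\| \geq \phi(Q)/Q^{\mc{N}}$, that is, $\varepsilon Q^{\mc{N}}/\phi(Q) > 1$, contradicting the hypothesis. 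Hence every point of $\Lambda_{\bs{L}} \cap Z$ has $\bs{q}=\bs{0}$, i.e.\ lies in $C$.

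There is essentially no obstacle here, the argument being a single chain of inequalities. The only points requiring a modicum of care are that $Q\geq 1$ (so that $\phi(Q)$ is defined and the step $\phi(|\bs{q}|_{\infty})\geq\phi(Q)$ is legitimate), which is part of the standing hypotheses of Proposition \ref{cor:cor1}, and the remark that one never needs to treat separately the case in which some coordinate $\bs{x}_{i}$ vanishes, since the lower bound $\phi(Q)/Q^{\mc{N}}>0$ already forces $\prod_{i=1}^{\mc{M}}|\bs{x}_{i}|$ to be strictly positive.
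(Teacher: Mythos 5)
Your proof is correct and is essentially the same argument as the paper's: contradiction via the explicit form $(L_{1}(\bs{q})+p_{1},\dotsc,L_{\mc{M}}(\bs{q})+p_{\mc{M}},\bs{q})$ of a lattice point outside $C$, the bound $|L_{i}(\bs{q})+p_{i}|\geq\|L_{i}(\bs{q})\|$, and the chain $|\bs{q}|_{\infty}^{\mc{N}}\prod_{i}\|L_{i}(\bs{q})\|\leq Q^{\mc{N}}\varepsilon<\phi(Q)\leq\phi(|\bs{q}|_{\infty})$ contradicting semi-multiplicative bad approximability. The side remarks on $Q\geq1$ and monotonicity of $\phi$ are exactly the points the paper uses implicitly.
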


\begin{proof}
Assume by contradiction that there exists $\underline{\bs{v}}\in(\Lambda_{\bs{L}}\cap Z)\setminus C$. Then,
$$\underline{\bs{v}}=(L_{1}(\bs{q})+p_{1},\dotsc,L_{\mc{M}}(\bs{q})+p_{\mc{M}},\bs{q})$$
for some $\bs{p}\in\mb{Z}^{\mc{M}}$ and $\bs{q}\in\mb{Z}^{\mc{N}}\setminus\{\bs{0}\}$. Since $\underline{\bs{v}}\in Z$, we have
$$|\bs{q}|_{\infty}^{\mc{N}}\prod_{i=1}^{\mc{M}}\left\|L_{i}(\bs{q})\right\|\leq|\bs{q}|_{\infty}^{\mc{N}}\prod_{i=1}^{\mc{M}}\left|L_{i}(\bs{q})+p_{i}\right|\leq Q^{\mc{N}}\varepsilon<\phi(Q),$$
and this contradicts (\ref{eq:semimultbadapp0}).
\end{proof}

If $\varepsilon Q^{\mc{N}}/\phi(Q)< 1$, it follows from Lemma \ref{lem:emptycase} and (\ref{eq:intersection}) that $M(\bs{L},\varepsilon,T,Q)=\emptyset$. Hence, to prove Proposition \ref{cor:cor1}, it suffices to show that
\begin{equation}
\label{eq:31}
2^{\mc{M}+\mc{N}}\log\left(\frac{T^{\mc{M}}}{\varepsilon}\right)^{\mc{M}-1}\varepsilon Q^{\mc{N}}\ll_{\mc{M},\mc{N}}(1+T)^{\mc{M}+\mc{N}-1}\log\left(\frac{T^{\mc{M}}}{\varepsilon}\right)^{\mc{M}-1}\left(\frac{\varepsilon Q^{\mc{N}}}{\phi(Q)}\right)^{\frac{\mc{M}+\mc{N}-1}{\mc{M}+\mc{N}}},
\end{equation}
and that
\begin{equation}
\label{eq:32}
2^{\mc{M}+\mc{N}} Q^{\mc{N}}T^{\mc{M}}\left(1-\left(1-\frac{\varepsilon}{T^{\mc{M}}}\right)^{\mc{M}-1}\right)\ll_{\mc{M},\mc{N}}(1+T)^{\mc{M}+\mc{N}-1}\log\left(\frac{T^{\mc{M}}}{\varepsilon}\right)^{\mc{M}-1}\left(\frac{\varepsilon Q^{\mc{N}}}{\phi(Q)}\right)^{\frac{\mc{M}+\mc{N}-1}{\mc{M}+\mc{N}}}.
\end{equation}

Inequality (\ref{eq:31}) follows immediately from the assumption $\varepsilon Q^{\mc{N}}/\phi(Q)<1$. To prove (\ref{eq:32}), we notice that
\begin{equation}
\label{eq:33}
1-\left(1-\frac{\varepsilon}{T^{\mc{M}}}\right)^{\mc{M}-1}\ll_{\mc{M}}\frac{\varepsilon}{T^{\mc{M}}},
\end{equation}
and again we use the fact that $\varepsilon Q^{\mc{N}}/\phi(Q)<1$. The proof is hence complete.

\section{Proof of Corollary \ref{cor:cor2}}

We notice that
\begin{align}
\label{eq:cor2eq1} 
 & \sum_{\substack{\bs{q}\in[-Q,Q]^{\mc{N}}\\ \cap\ \mb{Z}^{\mc{N}}\setminus\{\bs{0}\}}}\prod_{i=1}^{\mc{M}}\|L_{i}(\bs{q})\|^{-1}\nonumber \\
 & \leq\sum_{k=0}^{\infty}2^{k+1}\#\left\{\bs{q}\in[-Q,Q]^{\mc{N}}\cap\mb{Z}^{\mc{N}}\setminus\{\bs{0}\}:2^{-k-1}\leq\prod_{i=1}^{\mc{M}}\|L_{i}(\bs{q})\|<2^{-k}\right\}\nonumber \\
 & \leq\sum_{k=0}^{\infty}2^{k+1}\#\left\{\bs{q}\in[-Q,Q]^{\mc{N}}\cap\mb{Z}^{\mc{N}}\setminus\{\bs{0}\}:\prod_{i=1}^{\mc{M}}\|L_{i}(\bs{q})\|<2^{-k}\right\}\nonumber \\
 & =\sum_{k=0}^{\infty}2^{k+1}\# M\left(\bs{L},2^{-k},\frac{1}{2},Q\right)\nonumber \\
 & =\sum_{k=0}^{\left\lfloor\log_{2}\left(\frac{Q^{\mc{N}}}{\phi(Q)}\right)\right\rfloor}2^{k+1}\# M\left(\bs{L},2^{-k},\frac{1}{2},Q\right),
\end{align}
where the last equality follows from (\ref{eq:intersection}) and Lemma \ref{lem:emptycase} ($\varepsilon=2^{-k}$).

We use Proposition \ref{cor:cor1} to estimate the right-hand side of (\ref{eq:cor2eq1}). Since we need $T^{\mc{M}}/\varepsilon\geq e^{\mc{M}}$, i.e., $2^{k-\mc{M}}\geq e^{\mc{M}}$, we split the sum into two parts, one for $2^{k-\mc{M}}<e^{\mc{M}}$ and one for $2^{k-\mc{M}}\geq e^{\mc{M}}$. We find
\begin{align}
 & \sum_{\substack{\bs{q}\in[-Q,Q]^{\mc{N}}\\ \cap\ \mb{Z}^{\mc{N}}\setminus\{\bs{0}\}}}\prod_{i=1}^{\mc{M}}\|L_{i}(\bs{q})\|^{-1}\leq\sum_{k=0}^{\left\lfloor\mc{M}\left(1+1/\log 2\right)\right\rfloor}2^{k+1}\# M\left(\bs{L},2^{-k},\frac{1}{2},Q\right)\nonumber \\
 & +\sum_{k=\left\lceil\mc{M}\left(1+1/\log 2\right)\right\rceil}^{\left\lfloor\log_{2}\left(\frac{Q^{\mc{N}}}{\phi(Q)}\right)\right\rfloor}2^{k+1}\# M\left(\bs{L},2^{-k},\frac{1}{2},Q\right)\nonumber \\
 & \ll_{\mc{M},\mc{N}}Q^{\mc{N}}+\sum_{k=\left\lceil\mc{M}\left(1+1/\log 2\right)\right\rceil}^{\left\lfloor\log_{2}\left(\frac{Q^{\mc{N}}}{\phi(Q)}\right)\right\rfloor}2^{k+1}(k-\mc{M})^{\mc{M}-1}\left(2^{-k}Q^{\mc{N}}+\left(\frac{2^{-k}Q^{\mc{N}}}{\phi(Q)}\right)^{\frac{\mc{M}+\mc{N}-1}{\mc{M}+\mc{N}}}\right)\label{eq:cor2eq2.1}
\end{align}
\begin{align}
& \ll_{\mc{M},\mc{N}}\sum_{k=0}^{\left\lfloor\log_{2}\left(\frac{Q^{\mc{N}}}{\phi(Q)}\right)\right\rfloor}k^{\mc{M}-1}\left(Q^{\mc{N}}+2^{\frac{k}{\mc{M}+\mc{N}}}\left(\frac{Q^{\mc{N}}}{\phi(Q)}\right)^{\frac{\mc{M}+\mc{N}-1}{\mc{M}+\mc{N}}}\right)\label{eq:cor2eq2.2},
\end{align}
where we estimate $\# M\left(\bs{L},2^{-k},1/2,Q\right)$ with $Q^{\mc{N}}$ for $k\leq \left\lfloor\mc{M}\left(1+1/\log 2\right)\right\rfloor$. Note that we used (\ref{eq:33}) to obtain (\ref{eq:cor2eq2.1}). The required result follows from (\ref{eq:cor2eq2.2}) combined with the trivial estimates $\sum_{k=0}^{K}k^{\mc{M}-1}\leq K^{\mc{M}}$ and $\sum_{k=0}^{K}k^{\mc{M}-1}2^{\frac{k}{\mc{M}+\mc{N}}}\ll_{\mc{M},\mc{N}}K^{\mc{M}-1}2^{\frac{K}{\mc{M}+\mc{N}}}$.

\section{Proof of Propositions \ref{thm:thm3refined} and \ref{thm:thm2refined}}
\label{sec:proofHL}

The proofs of Theorems \cite[Theorems 2 and 3]{HuangLiu:Simultaneous} rely on \cite[Theorems 7 and 8]{HuangLiu:Simultaneous}. We state here the refined versions of Theorems 7 and 8. Let $\tilde{\bs{a}}=(q,\bs{a})$, where $q\in\mb{Z}$ and $\bs{a}\in\mb{Z}^{d}$ . For $0<\delta<1/2$ we set
$$\mc{A}(q,\delta):=\#\left\{\bs{a}\in\mb{Z}^{d}:\bs{a}\in(0,q]^{d},\ \left\|\tilde{\bs{a}}\tilde{\bs{A}}\right\|<\delta\right\},$$
and
$$\mc{N}(Q,\delta):=\sum_{q=1}^{Q}\mc{A}(q,\delta),$$
where $Q\in\mb{N}$.

\begin{lem}
\label{thm:HLthm8}
Let $\bs{A}\in\mb{R}^{d\times(\mc{N}-d)}$ and let $\bs{\alpha}_{0}\in\mb{R}^{\mc{N}-d}$. Let $\tilde{\bs{A}}$ be the matrix defined in (\ref{eq:tilde}). Assume that $\tilde{\bs{A}}$ is $\tilde{\phi}$-semi multiplicatively badly approximable, where $\tilde{\phi}:[1,+\infty)\to(0,1]$ is such that $\tilde{\phi}(\lambda x)\gg_{\lambda}\tilde{\phi}(x)$ for all $\lambda\gg 1$. Then, for all $\varepsilon'>0$ we have
\begin{multline}\left|\mc{N}(Q,\delta)-2^{\mc{N}-d}\delta^{\mc{N}-d}\sum_{q=1}^{Q}q^{d}\right|\leq\varepsilon'\delta^{\mc{N}-d}Q^{d+1}+ \\
O_{\varepsilon',\mc{N},d}\left(\log\left(\frac{1/\delta^{\mc{N}-d}}{\tilde{\phi}(1/\delta)}\right)^{d+1}+\frac{1}{\tilde{\phi}(1/\delta)}\log\left(\frac{1/\delta^{\mc{N}-d}}{\tilde{\phi}(1/\delta)}\right)^{d}\right). \nonumber
\end{multline}
\end{lem}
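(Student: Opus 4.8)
The plan is to realise $\mc{N}(Q,\delta)$ as a count of points of a weakly admissible lattice in a definable set, and then to run the apparatus of Sections \ref{sec:partitionproof}--\ref{sec:minimaproof} in the simplified ``box'' case, where no hyperbolic spike is present. Let $\Lambda\subset\mb{R}^{(\mc{N}-d)+(d+1)}$ be the lattice built from $\tilde{\bs{A}}$ as in Section \ref{subsec:weakadm} (transposed so as to match the orientation, so that a lattice point reads $\underline{\bs{v}}=(\bs{p}+\tilde{\bs{a}}\tilde{\bs{A}},\tilde{\bs{a}})$ with $\bs{p}\in\mb{Z}^{\mc{N}-d}$, $\tilde{\bs{a}}\in\mb{Z}^{d+1}$, and $\det\Lambda=1$), and for the parameters $(Q,\delta)$ let
$$W_{Q,\delta}:=\bigl\{(\bs{w},\tilde{\bs{a}})\in\mb{R}^{\mc{N}-d}\times\mb{R}^{d+1}:|\bs{w}|_{\infty}<\delta,\ 1\le\tilde a_{1}\le Q,\ 0<\tilde a_{j}\le\tilde a_{1}\ (2\le j\le d+1)\bigr\}.$$
Since $\delta<1/2$ the integer nearest $\tilde{\bs{a}}\tilde{\bs{A}}$ is unique, so $\#(\Lambda\cap W_{Q,\delta})=\mc{N}(Q,\delta)$, and $W_{Q,\delta}$ is definable because its $\tilde{\bs{a}}$-factor is a (convex) polytope. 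Exactly as in Lemma \ref{lem:weakadmcor}, giving the $\bs{w}$-coordinates unit weights and the $\tilde{\bs{a}}$-block weight $d+1$, the hypothesis that $\tilde{\bs{A}}$ is $\tilde\phi$-semi multiplicatively badly approximable gives
$$\Nm_{(\bs{\beta},\bs{\gamma})}(\underline{\bs{v}})\ \ge\ |\tilde{\bs{a}}|_{\infty}^{\,d+1}\prod_{i=1}^{\mc{N}-d}\bigl\|(\tilde{\bs{a}}\tilde{\bs{A}})_{i}\bigr\|\ \ge\ \tilde\phi(|\tilde{\bs{a}}|_{\infty})$$
for all $\underline{\bs{v}}\in\Lambda\setminus C$, where $C:=\{\tilde{\bs{a}}=\bs{0}\}$; hence $\Lambda$ is weakly admissible for the corresponding couple and $\nu(\Lambda,r)\ge\tilde\phi(r)^{1/(\mc{N}+1)}$ for all $r>0$. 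Note $W_{Q,\delta}\cap C=\emptyset$, so $\Lambda\cap C$ never contributes, and (as in Lemma \ref{lem:emptycase}) the same inequality forces $\mc{A}(q,\delta)=0$ unless $q^{d+1}\delta^{\mc{N}-d}\gg\tilde\phi(q)$.

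Next I would feed this into Theorem \ref{thm:mainestimate}. The $\bs{w}$-factor of $W_{Q,\delta}$ is a box, hence lies in the hyperbolic region $H_{\varepsilon,T}$ with $\varepsilon=\delta$, $T=2e\delta$ (so $T/\varepsilon=2e>e$ and the factor $\log(T/\varepsilon)^{\mc{N}-d-1}$ is $\ll_{\mc{N},d}1$), while the $\tilde{\bs{a}}$-factor lies in $[-Q,Q]^{d+1}$; here the exponent $\mc{M}+\mc{N}-1$ of Theorem \ref{thm:mainestimate} equals $\mc{N}$, and $(\varepsilon^{\mc{B}}Q^{\mc{C}})^{1/(\mc{B}+\mc{C})}=(\delta^{\mc{N}-d}Q^{d+1})^{1/(\mc{N}+1)}$. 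Since $\diam Z\ll Q$, $\diam(Z\cap C)\ll\delta$ and $\lambda_{1}(\Lambda\cap C)=1$, Theorem \ref{thm:mainestimate} together with the bound on $\nu$ yields
$$\bigl|\mc{N}(Q,\delta)-\Vol W_{Q,\delta}\bigr|\ \ll_{\mc{N},d}\ \inf_{0<r\le cQ}\Bigl(\frac{(\delta^{\mc{N}-d}Q^{d+1})^{1/(\mc{N}+1)}}{\tilde\phi(r)^{1/(\mc{N}+1)}}+\frac{Q}{r}+\delta\Bigr)^{\!\!\mc{N}}.$$
Moreover $\Vol W_{Q,\delta}=(2\delta)^{\mc{N}-d}\int_{1}^{Q}q^{d}\,dq$ differs from $2^{\mc{N}-d}\delta^{\mc{N}-d}\sum_{q=1}^{Q}q^{d}$ by $O_{d}(\delta^{\mc{N}-d}Q^{d})$, which is $\le\varepsilon'\delta^{\mc{N}-d}Q^{d+1}$ once $Q\ge1/\varepsilon'$ and is $O_{\varepsilon',d}(1)$ otherwise; so it remains to bound the right-hand side above.

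To exploit the freedom in $r$ one cannot apply this to $W_{Q,\delta}$ all at once, since $\diam Z\asymp Q$ caps $r$; instead one splits $W_{Q,\delta}$ into the pyramidal shells $\{2^{k-1}<\tilde a_{1}\le2^{k}\}$ — equivalently $\mc{N}(Q,\delta)=\sum_{k}\sum_{2^{k-1}<q\le2^{k}}\mc{A}(q,\delta)$ — applies the estimate on each shell (where $\diam\asymp2^{k}$, so $r$ may be taken of size $\asymp2^{k}$, and, by the emptiness condition, the balance point of the two $r$-terms already falls below $2^{k}$), and sums over $k$. On shells with $2^{k}$ large the per-shell error is a multiple $\ll(\delta^{\mc{N}-d}2^{k(d+1)}/\tilde\phi(2^{k}))^{\mc{N}/(\mc{N}+1)}\le\varepsilon'\,\delta^{\mc{N}-d}2^{k(d+1)}$ of that shell's main term, and (the bracket growing geometrically in $k$) these sum to $\le\varepsilon'\,\delta^{\mc{N}-d}Q^{d+1}$; only $\ll\log\bigl(1/(\delta^{\mc{N}-d}\tilde\phi(1/\delta))\bigr)$ shells survive the emptiness cutoff, and on the remaining ones a geometric summation identical to the one in the proof of Corollary \ref{cor:cor2} (where the analogous first power of $1/\phi(Q)$ and the factor $\log(\cdot)^{\mc{M}-1}$ arise) produces the announced $Q$-free remainder $\ll_{\varepsilon',\mc{N},d}\log(\cdot)^{d+1}+\tilde\phi(1/\delta)^{-1}\log(\cdot)^{d}$ with $(\cdot)=1/(\delta^{\mc{N}-d}\tilde\phi(1/\delta))$. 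Throughout, the hypothesis $\tilde\phi(\lambda x)\gg_{\lambda}\tilde\phi(x)$ is used to replace $\tilde\phi(2^{k})$ and $\tilde\phi(c/\delta)$ by $\tilde\phi(1/\delta)$ up to constants on the relevant ranges.

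The reformulation, the translation of the Diophantine hypothesis into a lower bound for $\nu(\Lambda,\cdot)$ via Lemma \ref{lem:weakadmcor}, and the per-shell application of Theorem \ref{thm:mainestimate} are routine given Sections \ref{sec:partitionproof}--\ref{sec:minimaproof}; the transposition of $\tilde{\bs{A}}$ needed to orient the lattice is purely notational. I expect the main obstacle to be the final bookkeeping: arranging the dyadic shells so that the errors telescope into \emph{exactly} $\varepsilon'\delta^{\mc{N}-d}Q^{d+1}$ plus a $Q$-free remainder with the precise logarithmic exponents $d+1$ and $d$ and only a first power of $\tilde\phi(1/\delta)^{-1}$ — in particular, checking that the geometric sum over the long shells is dominated by its top term uniformly in $\delta$ and $\varepsilon'$, and estimating the short shells sharply (via the counting theorem rather than the trivial bound $\mc{A}(q,\delta)\le q^{d}$) without picking up extra powers of $1/\delta$ or of the logarithm.
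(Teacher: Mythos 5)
Your route --- realising $\mc{N}(Q,\delta)$ directly as $\#(\Lambda\cap W_{Q,\delta})$ and applying Theorem \ref{thm:mainestimate} over dyadic shells in $q$ --- is genuinely different from the paper's, which simply imports the Selberg/Erd\H{o}s--Tur\'an-type inequality from Huang and Liu's proof of their Theorem~8, i.e.\ $\mc{N}(Q,\delta)\le\bigl(2\delta+\tfrac{1}{J+1}\bigr)^{\mc{N}-d}\bigl(\sum_{q\le Q}q^{d}+\sum_{0<|\bs{j}|_{\infty}\le J}\prod_{u}\|\tilde A_{u}(\bs{j})\|^{-1}\bigr)$, takes $J\asymp\kappa/\delta$, and bounds the Fourier-side sum by Corollary \ref{cor:cor2} with $d+1$ linear forms in $\mc{N}-d$ variables; that is exactly where the logarithmic exponents $d+1$ and $d$ and the single power of $\tilde{\phi}(1/\delta)^{-1}$ come from. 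Your proposal has a genuine gap at the very first step: the hypothesis of the lemma is the \emph{dual} condition. By the definition (and consistently with how $\omega_{m}(\tilde{\bs{A}})$ and the proof of Lemma \ref{thm:HLthm7} are used), ``$\tilde{\bs{A}}$ is $\tilde{\phi}$-semi multiplicatively badly approximable'' means $|\bs{j}|_{\infty}^{\mc{N}-d}\prod_{u=1}^{d+1}\|\tilde A_{u}(\bs{j})\|\ge\tilde{\phi}(|\bs{j}|_{\infty})$ for $\bs{j}\in\mb{Z}^{\mc{N}-d}\setminus\{\bs{0}\}$, the $\tilde A_{u}$ being the rows. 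Your lattice needs instead $|\tilde{\bs{a}}|_{\infty}^{d+1}\prod_{i=1}^{\mc{N}-d}\|(\tilde{\bs{a}}\tilde{\bs{A}})_{i}\|\ge\tilde{\phi}(|\tilde{\bs{a}}|_{\infty})$ for $\tilde{\bs{a}}\in\mb{Z}^{d+1}$, i.e.\ the same property for $\tilde{\bs{A}}^{T}$. The transposition is not ``purely notational'': the primal and dual multiplicative conditions are related only by transference, with a loss in $\tilde{\phi}$, so weak admissibility of your lattice does not follow from the stated hypothesis with the stated $\tilde{\phi}$.

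Even granting the primal condition, the bookkeeping does not close. On the shell $2^{k-1}<q\le 2^{k}$ the constraint $r\le\diam Z\asymp 2^{k}$ forces a per-shell error $\asymp u_{k}^{\mc{N}/(\mc{N}+1)}$ with $u_{k}=\delta^{\mc{N}-d}2^{k(d+1)}/\tilde{\phi}(2^{k})$; since $u_k$ grows at least geometrically, the sum over shells is dominated by its top term $\bigl(\delta^{\mc{N}-d}Q^{d+1}/\tilde{\phi}(Q)\bigr)^{\mc{N}/(\mc{N}+1)}$. This involves $\tilde{\phi}(Q)$, not $\tilde{\phi}(1/\delta)$, and for $\tilde{\phi}(x)\asymp x^{-\gamma}$ with $\gamma$ large (which the hypotheses allow) it exceeds the main term $\delta^{\mc{N}-d}Q^{d+1}$ for $Q\to\infty$ with $\delta$ fixed, so it cannot be written as $\varepsilon'\delta^{\mc{N}-d}Q^{d+1}$ plus a $Q$-free remainder, let alone one with only a first power of $\tilde{\phi}(1/\delta)^{-1}$. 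Relatedly, your claim that only $\ll\log\bigl(1/(\delta^{\mc{N}-d}\tilde{\phi}(1/\delta))\bigr)$ shells survive the emptiness cutoff is false: the cutoff removes small $k$, and all $k$ up to $\log_{2}Q$ remain, with $Q$ unbounded; and since your partition contributes no logarithms (you arranged $\log(T/\varepsilon)\ll 1$), it is not visible where the exponent $d+1$ on the logarithm --- which in the paper is the number of rows of $\tilde{\bs{A}}$ entering Corollary \ref{cor:cor2} --- would come from. The correct repair is to abandon the direct count and reduce to Corollary \ref{cor:cor2} on the Fourier side, as the paper does.
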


\begin{lem}
\label{thm:HLthm7}
Let $\bs{A}\in\mb{R}^{d\times(\mc{N}-d)}$ be a $\phi$-semi multiplicatively badly approximable matrix, where $\phi:[1,+\infty)\to(0,1]$ is such that $\phi(\lambda x)\gg_{\lambda}\phi(x)$ for all $\lambda\gg 1$. Then, for all $\varepsilon'>0$ we have
$$\left|\mc{A}(q,\delta)-2^{\mc{N}-d}\delta^{\mc{N}-d}q^{d}\right|\leq\varepsilon'\delta^{\mc{N}-d}q^{d}+O_{\varepsilon',\mc{N},d}\left(\log\left(\frac{1/\delta^{\mc{N}-d}}{\phi(1/\delta)}\right)^{d}+\frac{1}{\phi(1/\delta)}\log\left(\frac{1/\delta^{\mc{N}-d}}{\phi(1/\delta)}\right)^{d-1}\right).$$
\end{lem}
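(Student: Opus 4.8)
The plan is to recast $\mc{A}(q,\delta)$ as an inhomogeneous lattice point count over a box-shaped member of a definable family and to feed it into the counting estimate of Section~2 in the degenerate regime where the region carries no hyperbolic spike (so $M=1$ and the factor $\log(T/\varepsilon)^{M-1}$ is absent); this is the frozen-$q$ analogue of Lemma~\ref{thm:HLthm8}, where instead $q$ ranges over $\{1,\dots,Q\}$, yielding a genuine homogeneous count in one extra variable against a pyramidal region to which Theorem~\ref{thm:mainestimate} applies directly. Concretely: since $0<\delta<1/2$, for each $\bs{a}$ there is at most one $\bs{p}\in\mb{Z}^{\mc{N}-d}$ with $|q\bs{\alpha}_0+\bs{a}\bs{A}+\bs{p}|_\infty<\delta$, and this is exactly the condition $\|\tilde{\bs{a}}\tilde{\bs{A}}\|<\delta$; hence, with $\Lambda_0:=\{(\bs{a}\bs{A}+\bs{p},\bs{a}):\bs{a}\in\mb{Z}^d,\ \bs{p}\in\mb{Z}^{\mc{N}-d}\}\subset\mb{R}^{(\mc{N}-d)+d}$ (unimodular) and $K_{q,\delta}:=[-\delta,\delta]^{\mc{N}-d}\times(0,q]^d$, one has $\mc{A}(q,\delta)=\#\big((\Lambda_0+(q\bs{\alpha}_0,\bs{0}))\cap K_{q,\delta}\big)$. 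The family $\{K_{q,\delta}\}$ is semialgebraic (so $\mb{R}_{\exp}$-definable) with bounded fibres, and $\Vol\,K_{q,\delta}=2^{\mc{N}-d}\delta^{\mc{N}-d}q^d$ — the claimed main term, since $\det\Lambda_0=1$.

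Next one checks weak admissibility. Take the $\mc{N}-d$ coordinates $\bs{u}$ as one-dimensional blocks of weight $1$ and $\bs{v}\in\mb{R}^d$ as a single block of weight $d$, and put $C:=\{\bs{v}=\bs{0}\}$; on $\Lambda_0\setminus C$ no coordinate block vanishes ($\mb{Q}$-linear independence of $1$ with the entries of $\bs{A}$ forces $(\bs{a}\bs{A})_l\notin\mb{Z}$ for $\bs{a}\neq\bs{0}$), and
$$\textup{Nm}_{(\bs{\beta},\bs{\gamma})}\big((\bs{a}\bs{A}+\bs{p},\bs{a})\big)\ \ge\ |\bs{a}|_\infty^{d}\prod_{l=1}^{\mc{N}-d}\|(\bs{a}\bs{A})_l\|.$$
Exactly as in Lemma~\ref{lem:weakadmcor}, the right-hand side is bounded below in terms of $\phi$: the forms of $\Lambda_0$ are those of $\bs{A}$ up to transpose, and one passes from the hypothesis on $\bs{A}$ to this bound by the standard multiplicative transference between a matrix and its transpose (this is where $\phi(\lambda x)\gg_\lambda\phi(x)$ enters). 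Hence $\Lambda_0$ is weakly admissible with $\nu(\Lambda_0,\varrho)\gg_{\mc{N},d}\phi^\sharp(\varrho)^{1/\mc{N}}$ for a non-increasing $\phi^\sharp\asymp\phi$ up to rescaling of the argument.

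Now apply the counting estimate of Section~2 to $(\Lambda_0,\{K_{q,\delta}\})$. As $K_{q,\delta}$ is a box one may take $\varepsilon\asymp\delta$, $T\asymp\delta$ with $T/\varepsilon$ a fixed constant $>e$ and $Q\asymp q$, so $M=1$ and no logarithmic loss occurs; the inhomogeneous shift is handled by first reducing $(q\bs{\alpha}_0,\bs{0})$ modulo $\Lambda_0$ to $\sup$-norm $\le 1/2$ and using that orthogonal projections of a box are translation-invariant, so the estimate — equivalently Theorem~\ref{thm:latticeest} after the volume-preserving rescaling $(\bs{u},\bs{v})\mapsto(\bs{u}/\delta,\bs{v}/q)$, with the first-minimum bound furnished by Proposition~\ref{prop:firstmin} together with the previous step — holds uniformly in the shift. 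Feeding in $\varepsilon^{\mc{B}}Q^{\mc{C}}\asymp\delta^{\mc{N}-d}q^d$, $\diam\,K_{q,\delta}\asymp q$, $\diam(K_{q,\delta}\cap C)\asymp\delta$, $\lambda_1(\Lambda_0\cap C)=1$, optimising over the free radius $r\in(0,\diam\,K_{q,\delta}]$, and evaluating $\phi^\sharp$ at the relevant scale via the regularity of $\phi$ yields, once $q$ exceeds a threshold depending on $\varepsilon'$,
$$\Big|\mc{A}(q,\delta)-2^{\mc{N}-d}\delta^{\mc{N}-d}q^d\Big|\ \le\ \varepsilon'\delta^{\mc{N}-d}q^d+O_{\varepsilon',\mc{N},d}\!\left(\log\Big(\tfrac{1/\delta^{\mc{N}-d}}{\phi(1/\delta)}\Big)^{d}+\frac{1}{\phi(1/\delta)}\log\Big(\tfrac{1/\delta^{\mc{N}-d}}{\phi(1/\delta)}\Big)^{d-1}\right);$$
for the remaining bounded range of $q$ both sides are $O_{\varepsilon',\mc{N},d}(1)$ and get absorbed into the error.

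The two delicate points are as follows. First, the count is inhomogeneous, and the lattice that would homogenise it — the one attached to $\tilde{\bs{A}}$ — need not be weakly admissible under the hypothesis on $\bs{A}$ alone (adjoining the row $\bs{\alpha}_0$ can only shrink the product of fractional parts), so one is forced to keep $\Lambda_0$ and to make the multiplicative transference between $\bs{A}$ and $\bs{A}^\top$ quantitative while controlling the loss, in order not to introduce a spurious factor $1/\phi(1/\delta)$ — precisely the improvement over Huang--Liu's Theorem~7. Second, massaging the output of the counting estimate into their exact error shape, with $1/\phi(1/\delta)$ attached only to the $\log^{d-1}$-term, together with the $q$-threshold argument producing the clean $\varepsilon'\cdot(\text{main term})$, is where the bookkeeping is most exacting.
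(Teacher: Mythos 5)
Your approach --- recasting $\mc{A}(q,\delta)$ as a direct (inhomogeneous) lattice point count for $\Lambda_0=\{(\bs{a}\bs{A}+\bs{p},\bs{a})\}$ in the box $K_{q,\delta}=[-\delta,\delta]^{\mc{N}-d}\times(0,q]^{d}$ and feeding it into the Section~2 machinery --- is not the paper's route, and it has a gap that I do not think can be repaired. The counting estimate you invoke produces an error governed by $\nu(\Lambda_0,r)$ for some $r\leq\diam K_{q,\delta}\asymp q$, traded off against $\diam(K_{q,\delta})/r=q/r$. Taking $r\asymp q$ gives an error of order $\bigl(\delta^{\mc{N}-d}q^{d}/\phi^{\sharp}(q)\bigr)^{(\mc{N}-1)/\mc{N}}$, i.e.\ $\phi$ evaluated at the scale $q$ of the \emph{long} sides of the box; taking $r$ smaller makes $(q/r)^{\mc{N}-1}$ dominate. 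The statement you must prove, however, has $\phi$ evaluated at $1/\delta$, the scale of the \emph{short} sides, and its $\varepsilon'$-term carries no $1/\phi$ factor at all. In the typical regime $q\gg 1/\delta$ (e.g.\ $\delta=\psi(q)$ with $\psi(q)\approx q^{-1/\mc{N}}$, which is exactly the regime of Proposition~\ref{thm:thm2refined}), $\phi(q)$ can be far smaller than $\phi(1/\delta)$, and $\bigl(\delta^{\mc{N}-d}q^{d}/\phi(q)\bigr)^{(\mc{N}-1)/\mc{N}}$ need not be $\leq\varepsilon'\delta^{\mc{N}-d}q^{d}+O_{\varepsilon'}(\cdots)$ without extra hypotheses tying $\delta$, $q$ and the decay of $\phi$ together; the lemma assumes none. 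A second, independent problem is your weak-admissibility step: the norm form on $\Lambda_0$ involves $|\bs{a}|_{\infty}^{d}\prod_{l}\|(\bs{a}\bs{A})_{l}\|$, i.e.\ the forms of $\bs{A}^{\top}$, while the hypothesis controls $|\bs{j}|_{\infty}^{\mc{N}-d}\prod_{u}\|A_{u}(\bs{j})\|$. Passing between a matrix and its transpose in the multiplicative setting is a genuine (and lossy) transference problem; it is not furnished by the doubling property $\phi(\lambda x)\gg_{\lambda}\phi(x)$, and it would not preserve $\phi$ up to rescaling of the argument as you assert. (There is also an internal inconsistency: you declare the $\mc{N}-d$ short coordinates to be $M=\mc{N}-d$ blocks and then claim $M=1$.)

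The paper avoids all of this by not counting in the long box at all. Following Huang--Liu, it uses the Selberg/Erd\H{o}s--Tur\'an majorant to write
$\mc{A}(q,\delta)\leq\bigl(2\delta+\tfrac{1}{J+1}\bigr)^{\mc{N}-d}\bigl(q^{d}+\sum_{0<|\bs{j}|_{\infty}\leq J}\prod_{u=1}^{d}\|A_{u}(\bs{j})\|^{-1}\bigr)$
(and the matching minorant), so that $q$ contributes only the trivial factor $q^{d}$ and all Diophantine input sits at the scale $J\asymp\kappa/\delta$. The inner sum is then $S_{\bs{A}}(J)$ for the matrix $\bs{A}$ itself (no transpose), estimated by Corollary~\ref{cor:cor2}; the hypothesis $\phi(\kappa/\delta)\gg_{\kappa}\phi(1/\delta)$ converts the result to the stated form, and choosing $\kappa=\kappa(\varepsilon')$ large produces the clean $\varepsilon'\delta^{\mc{N}-d}q^{d}$ term. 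If you want to salvage a lattice-counting proof, you would have to first perform this Fourier reduction (or otherwise decouple the $q$-direction from the Diophantine data); the direct count over $K_{q,\delta}$ cannot see that the error should depend on $\phi$ only at scale $1/\delta$.
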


For simplicity, we prove Lemma \ref{thm:HLthm7} first.

\begin{proof}
From Huang and Liu's proof of \cite[Theorem 7]{HuangLiu:Simultaneous}, we have
$$\mc{A}(q,\delta)\leq\left(2\delta+\frac{1}{J+1}\right)^{\mc{N}-d}\left(q^{d}+\sum_{0<|\bs{j}|_{\infty}\leq J}\prod_{u=1}^{d}\left\|A_{u}(\bs{j})\right\|^{-1}\right)$$
for any $J\in\mb{N}$ (recall that $A_{u}$ denotes the linear form induced by the $u$-th row of the matrix $\bs{A}$). We apply Corollary \ref{cor:cor2} to estimate the right-hand side. We conclude the proof as in \cite{HuangLiu:Simultaneous}, by using the fact that $\phi(\kappa/\delta)\gg_{\kappa}\phi(1/\delta)$, where $\kappa$ is some large integer.
\end{proof}

\noindent The proof of Lemma \ref{thm:HLthm8} is along the same lines.

Now, we show how to prove Proposition \ref{thm:thm2refined}. We follow \cite{HuangLiu:Simultaneous}. First, we note that without loss of generality we can assume $\psi(x)\geq\hat{\psi}(x)$ for all $x$, since otherwise we replace $\psi$ with $\max\left\{\hat{\psi}(x),\psi(x)\right\}$, and we prove that the Hausdorff dimension of the set \\ $\mathscr{S}_{\mc{N}}\left(\max\left\{\hat{\psi}(x),\psi(x)\right\}\right)\supset\mathscr{S}_{\mc{N}}(\psi)$ is zero (here we use condition $iiia)$). It follows that in condition $iiib)$ we can replace $\hat{\psi}$ with $\psi$.
To prove the claim, we need to estimate $\mc{A}(q,C\psi(q))$, where $C$ is some large constant depending on $\bs{A}$ (see \cite[Proof of Thm. 2]{HuangLiu:Simultaneous}). By applying Lemma \ref{thm:HLthm7} with $\varepsilon'=1$, we find
\begin{multline}
\label{eq:final1}
\left|\mc{A}(q,C\psi(q))-(2C)^{\mc{N}-d}\psi(q)^{\mc{N}-d}q^{d}\right| \\
\ll_{C,\mc{N},d}\psi(q)^{\mc{N}-d}q^{d}+
\log\left(\frac{1/\psi(q)^{\mc{N}-d}}{\phi(1/\psi(q))}\right)^{d}+\frac{1}{\phi(1/\psi(q))}\log\left(\frac{1/\psi(q)^{\mc{N}-d}}{\phi(1/\psi(q))}\right)^{d-1}.
\end{multline}
Then, from $ii)$ we deduce
\begin{equation}
\label{eq:final2}
\log\left(\frac{1/\psi(q)^{\mc{N}-d}}{\phi(1/\psi(q))}\right)^{d}\ll_{\mc{N},d,\gamma}\log\left(\frac{1}{\psi(q)}\right)^{d}\ll_{\mc{N},d}\frac{1}{\phi(1/\psi(q))}\log\left(\frac{1/\psi(q)^{\mc{N}-d}}{\phi(1/\psi(q))}\right)^{d-1}.
\end{equation}
Finally, condition $iiib)$ with $\psi$ in lieu of $\hat{\psi}$ implies
\begin{equation}
\label{eq:final3}
\frac{1}{\phi(1/\psi(q))}\log\left(\frac{1/\psi(q)^{\mc{N}-d}}{\phi(1/\psi(q))}\right)^{d-1}\ll_{\mc{N},d,s}\frac{\psi(q)^{\mc{N}-d}q^{d}}{\log(q)^{d-1}}\log\left(\frac{q^{d}}{\log(q)^{d-1}}\right)^{d-1}\ll_{d}\psi(q)^{\mc{N}-d}q^{d}. 
\end{equation}
Hence, from (\ref{eq:final1}), (\ref{eq:final2}), and (\ref{eq:final3}) we deduce $\mc{A}(q,C\psi(q))\ll_{C,\mc{N},d,s,\gamma}\psi(q)^{\mc{N}-d}q^{d}$, and we can conclude just as in \cite{HuangLiu:Simultaneous}.

To prove Proposition \ref{thm:thm3refined}, we use Lemma \ref{thm:HLthm8} and part $iii)$ to obtain an estimate of \\ $\mc{N}(Q,C\psi(Q))$.

\section*{Acknowledgements}
My deep gratitude goes to my supervisor, Martin Widmer, for his valuable advice and constant encouragement. I would also like to thank Royal   
Holloway, University of London, for funding my position here.

\addcontentsline{toc}{section}{\bibname}
\bibliographystyle{plain}
\bibliography{Bibliography}

\end{document}